\documentclass[12pt]{amsart}
\usepackage{amssymb}
\usepackage[all]{xy}

\usepackage{url}
\usepackage{geometry}
\geometry{body={6.5in, 9in},
  left=1in,
  top=1in
}

\usepackage{enumerate}
\usepackage{graphicx}
\usepackage{amssymb,amsthm}
\usepackage{mathtools}
\usepackage{optidef}
\RequirePackage[colorlinks=true,citecolor=blue,linkcolor=blue,urlcolor=blue]{hyperref}
\usepackage{tikz}
\DeclarePairedDelimiter\ceil{\lceil}{\rceil}
\DeclarePairedDelimiter\floor{\lfloor}{\rfloor}
\DeclareMathOperator{\Z}{Z}
\newcommand\abs[1]{\left|#1\right|}
\newcommand{\iu}{{i\mkern1mu}}
\DeclarePairedDelimiter{\nint}\lfloor\rceil

\newtheorem{theorem}{Theorem}
\newtheorem{lemma}[theorem]{Lemma}
\newtheorem{corollary}[theorem]{Corollary}
\newtheorem{proposition}[theorem]{Proposition}
\newtheorem{conjecture}[theorem]{Conjecture}
\theoremstyle{remark}

\theoremstyle{definition}

\author[Becker]{Paul Becker}
\address{Department of Mathematics\\
         Penn State Behrend\\
         Erie, PA 16563
}
\email{\href{mailto:peb8@psu.edu}{peb8@psu.edu}}

\author[Cameron]{Thomas R.~Cameron}
\address{Department of Mathematics\\
         Penn State Behrend\\
         Erie, PA 16563
}
\email{\href{mailto:trc5475@psu.edu}{trc5475@psu.edu}}

\author[Hanely]{Derek Hanely}
\address{Department of Mathematics\\
         Penn State Behrend\\
         Erie, PA 16563
}
\email{\href{mailto:dwh28@psu.edu}{dwh28@psu.edu}}

\author[Ong]{Boon Ong}
\address{Department of Mathematics\\
         Penn State Behrend\\
         Erie, PA 16563
}
\email{\href{mailto:bwo1@psu.edu}{bwo1@psu.edu}}

\author[Previte]{Joseph P.~Previte}
\address{Department of Mathematics\\
         Penn State Behrend\\
         Erie, PA 16563
}
\email{\href{mailto:jpp4@psu.edu}{jpp4@psu.edu}}

\date{8 April 2024}

\title[On the number of minimal forts of a graph]{On the number of minimal forts of a graph} 
\keywords{forts, minimal forts, zero forcing, graph products, integer programming}
\subjclass[MSC Classification]{05C15, 05C30, 05C35, 05C50, 05C57, 05C76, 90C35}

\begin{document}

\begin{abstract}
In 2018, a fort of a graph was introduced as a non-empty subset of vertices in which no vertex outside of the set has exactly one neighbor in the set. Since then, forts have been used to characterize zero forcing sets, model the zero forcing number as an integer program, and generate lower bounds on the zero forcing number of a Cartesian product. In this article, we investigate the number of minimal forts of a graph, where a fort is minimal if every proper subset is not a fort. In particular, we show that the number of minimal forts of a graph of order at least six is strictly less than Sperner's bound, a famous bound due to Emanuel Sperner (1928) on the size of a collection of subsets where no subset contains another. Then, we derive an explicit formula for the number of minimal forts for several families of graphs, including the path, cycle, and spider graphs. Moreover, we show that the asymptotic growth rate of the number of minimal forts of the spider graph is bounded above by that of the path graph. We conjecture that the asymptotic growth rate of the path graph is extremal over all trees. Finally, we develop methods for constructing minimal forts of graph products using the minimal forts of the original graphs. In the process, we derive explicit formulas and lower bounds on the number of minimal forts for additional families of graphs, such as the wheel, sunlet, and windmill graphs. Most notably, we show that the family of windmill graphs has an exponential number of minimal forts with a maximum asymptotic growth rate of cube root of three, which is the largest asymptotic growth rate we have observed. We conjecture that there exist families of graphs with a larger asymptotic growth rate.
\end{abstract}

\maketitle

\section{Introduction}\label{sec:intro}
Zero forcing is a binary coloring game on a graph where a set of filled vertices can force non-filled vertices to become filled following a color change rule. 
In 2007, zero forcing was independently developed by Burgarth and Giovannetti in the control of quantum systems~\cite{Burgarth2007}, where it was called graph infection.
In 2008, the zero forcing number was shown to be an upper bound on the maximum nullity of a real symmetric matrix associated with a graph~\cite{AIM2008}. 

The computation of the zero forcing number has been shown to be a NP-complete problem~\cite{Aazami2008}.
Since then, there have been several integer programming models developed for computing the zero forcing number~\cite{Agra2019,Brimkov2019,Brimkov2021}. 
One such model is referenced as the fort cover model since the constraints of the model are built upon the forts of a graph.
The forts of a graph was originally introduced in~\cite{Fast2018} and is closely related to the failed zero forcing sets of a graph~\cite{Abara2022,Fetcie2015,Swanson2023}. 
In particular, the complement of a stalled failed zero forcing set is a fort, see the \emph{closure complement method} in~\cite{Brimkov2019}.

It is natural to consider the number of minimal forts a graph can have, where a fort is minimal if every proper subset is not a fort. 
For instance, the constraints in the fort cover model could be reduced to the minimal forts of a graph. 
Moreover, the minimal forts play an important role in characterizing the facet-defining forts of the zero forcing polytope, which is the convex hull of the feasible solutions to the fort cover model.
Finally, the bound in~\cite[Corollary 4.24]{Cameron2023} depends on the maximum number of minimal forts containing a single vertex of the graph.

In this article, we show that the number of minimal forts of a graph of order $n\geq 6$ is strictly less than $\binom{n}{\floor{n/2}}$.
Thus, the number of minimal forts does not attain \emph{Sperner's bound}, a famous bound on the size of a collection of subsets where no subset contains another~\cite{Sperner1928}. 
We also derive explicit formulas for the number of minimal forts for several families of graphs, including the path, cycle, and spider graphs.
In particular, for all spider graphs, we show that the number of minimal forts has an asymptotic growth rate bounded above by the asymptotic growth rate of the path graph. 
We suspect that the asymptotic growth rate of the path graph is extremal over all trees. 

Then, we develop methods for constructing minimal forts of graph products using the minimal forts of the original graphs.
Moreover, we use these construction methods to provide lower bounds on the number of minimal forts for several families of graphs, such as the wheel, sunlet, and windmill graphs. 
In particular, we show that the windmill graph $Wd(4,k)$ of order $n=3k+1$ has at least $3^{(n-1)/3}$ minimal forts, which is the largest asymptotic growth rate we have observed.
Though, we suspect that there exist families of graphs with a larger asymptotic growth rate. 

Before proceeding, we note that the authors in~\cite{Brimkov2019,Brimkov2021} considered the similar problem of constructing facet-defining forts for the zero forcing polytope. 
When appropriate, we make note of any overlap between their results and ours. 
In addition, several authors have considered the similar problem of constructing maximal failed zero forcing sets for graphs and graph products~\cite{Abara2022,Fetcie2015,Swanson2023}. 
However, these constructions are generally focused on a single maximal failed zero forcing set in order to obtain lower bounds on the failed zero forcing number.
\section{Preliminaries}\label{sec:prelim}
Throughout this article, we let $\mathbb{G}$ denote the set of all finite simple unweighted graphs.
For each $G\in\mathbb{G}$, we have $G=(V,E)$, where $V$ is the vertex set, $E$ is the edge set, and $\{u,v\}\in E$ if and only if $u\neq v$ and there is an edge between vertices $u$ and $v$.
If the context is not clear, we use $V(G)$ and $E(G)$ to specify the vertex set and edge set of $G$, respectively. 
The \emph{order} of $G$ is denoted by $n=\abs{V}$ and the \emph{size} of $G$ is denoted by $m=\abs{E}$; when $m=0$, we refer to $G$ as the \emph{empty graph}.

Let $G\in\mathbb{G}$.
Given $u\in V$, we define the \emph{neighborhood} of $u$ as 
\[
N(u) = \left\{v\in V\colon\{u,v\}\in E\right\}.
\]
We refer to every $v\in N(u)$ as a \emph{neighbor} of $u$, and we say that $u$ and $v$ are \emph{adjacent}. 
If the context is not clear, we use $N_{G}(u)$ to denote the neighborhood of $u$ in the graph $G$.
The \emph{closed neighborhood} of $u$ is defined by $N[u] = N(u)\cup\{u\}$. 
If $u,v\in V$ satisfy $N(u)=N(v)$, then we say that $u$ and $v$ are \emph{twins}. 
The degree of $u$ is denoted by $d(u)=\abs{N(u)}$; when $d(u)=0$, we refer to $u$ as an \emph{isolated vertex}, when $d(u)=1$, we refer to $u$ as a \emph{pendant vertex}, and when $d(u)\geq 3$, we refer to $u$ as a \emph{junction vertex}.
The \emph{minimum degree} of $G$ is denoted by $\delta(G)=\min\{d(u)\colon u\in V\}$.

The graph $G'=(V',E')$ is a \emph{subgraph} of $G$ if $V'\subseteq V$ and $E'\subseteq E$. 
Given $W\subseteq V$, the subgraph induced by $W$ is the subgraph of $G$ made up of the vertices in $W$ and the edges $\{u,v\}\in E$ such that $u,v\in W$. 
We reference this subgraph as an \emph{induced subgraph} and denote it by $G[W]$. 
If $G[W]$ is empty, then $W$ forms an \emph{independent set}. 
Furthermore, $W$ is a \emph{maximal independent set} if $W$ is not contained in another independent set. 

Given vertices $u,v\in V$, we say that $u$ is \emph{connected to} $v$ if there exists a list of vertices $(w_{0},w_{1},\ldots,w_{l})$ such that $u=w_{0}$, $v=w_{l}$, $\{w_{i},w_{i+1}\}\in E$ for all $i=1,\ldots,l-1$, and $w_{i}\neq w_{j}$ for all $i\neq j$. 
Such a list of vertices is called a \emph{path}; in particular, we reference it as a $(u,v)$-path. 
The connected to relation is an equivalence relation, which partitions the vertex set $V$ into disjoint subsets: $W_{1},W_{2},\ldots,W_{k}$, where for all $i=1,\ldots,k$ and for all $u,v\in W_{i}$, $u$ is connected to $v$. 

For each $i=1,\ldots,k$, the induced subgraph $G[W_{i}]$ is called a \emph{connected component} of $G$.
The graph $G$ is \emph{connected} if it has exactly one connected component. 
Given $u\in V$, the graph $G-u$ is the induced subgraph $G[V\setminus\{u\}]$.
Also, given $\{u,v\}\in E$, the graph $G-\{u,v\}$ is the subgraph of $G$ obtained by deleting the edge $\{u,v\}$. 
If $G-u$ has more connected components than $G$, then $u$ is a \emph{cut vertex} of $G$.
Similarly, if $G-\{u,v\}$ has more connected components than $G$, then $\{u,v\}$ is a \emph{cut edge} of $G$.

\emph{Zero forcing} is a binary coloring game on a graph, where vertices are either filled or non-filled. 
In this article, we denote filled vertices by the color gray and non-filled vertices by the color white. 
An initial set of gray vertices can force white vertices to become gray following a color change rule. 
While there are many color change rules, see~\cite[Chapter 9]{Hogben2022}, we will use the \emph{standard rule} which states that a gray vertex $u$ can force a white vertex $v$ if $v$ is the only white neighbor of $u$.
Since the vertex set is finite, there comes a point in which no more forcings are possible.
If at this point all vertices are gray, then we say that the initial set of gray vertices is a \emph{zero forcing set} of $G$; otherwise, we refer to the initial set of gray vertices as a \emph{failed zero forcing set} of $G$.
The \emph{zero forcing number} of $G$, denoted $\Z(G)$, is the minimum cardinality of a zero forcing set of $G$.

Given a graph $G\in\mathbb{G}$, a non-empty subset $F\subseteq V$ is a \emph{fort} if no vertex $u\in V\setminus{F}$ has exactly one neighbor in $F$.
Let $\mathcal{F}_{G}$ denote the collection of all forts of $G$. 
Note that $\mathcal{F}_{G}$ forms a cover for the zero forcing sets of $G$; in particular, Theorem~\ref{thm:fort-cover} states that a subset of vertices is a zero forcing set if and only if that subset intersects every fort. 
While one direction of this result was originally proven in~\cite[Theorem 3]{Fast2018}, both directions are shown in~\cite[Theorem 8]{Brimkov2019}.
\begin{theorem}[Theorem 8 in~\cite{Brimkov2019}]\label{thm:fort-cover}
Let $G\in\mathbb{G}$.
Then, $S\subseteq V$ is a zero forcing set of $G$ if and only if $S$ intersects every fort in $\mathcal{F}_{G}$.
\end{theorem}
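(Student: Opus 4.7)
The plan is to prove both directions by contradiction, exploiting the dynamics of the forcing process and the defining condition of a fort in a symmetric way.

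For the forward direction, I would assume $S$ is a zero forcing set but there exists a fort $F \in \mathcal{F}_G$ with $S \cap F = \emptyset$. Since $S$ forces the whole graph, every vertex of $F$ eventually becomes gray. Consider the first time step at which some vertex $v \in F$ turns gray; let $u$ be the vertex that forces $v$. At the moment just before the forcing, $u$ is gray and $v$ is the unique white neighbor of $u$. Because $v$ is the \emph{first} vertex of $F$ to be colored gray and $S \cap F = \emptyset$, every other vertex of $F$ is still white at that moment; in particular $u \notin F$. But then $u \in V \setminus F$ has $v$ as its only neighbor in $F$ (any other neighbor of $u$ lying in $F$ would be white, contradicting that $v$ is the unique white neighbor of $u$). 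This contradicts the definition of a fort.

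For the reverse direction, I would assume $S$ intersects every fort but $S$ is not a zero forcing set. Then the standard color change rule, applied exhaustively starting from $S$, must stall with a nonempty set $F \subseteq V$ of white vertices. Clearly $F \cap S = \emptyset$. I claim $F$ is a fort: if some $u \in V \setminus F$ (hence gray at the stalled state) had exactly one neighbor in $F$, then $u$ would have a unique white neighbor and could perform a forcing, contradicting that the process has stalled. Hence $F \in \mathcal{F}_G$ is a fort disjoint from $S$, contradicting the hypothesis.

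The two steps are essentially dual, and the only place that requires care is pinning down ``the first vertex of $F$ to turn gray'' in the forward direction. Even if multiple independent forcings happen at the same time step, one can order the forcings within a step arbitrarily (or simply pick any vertex of $F$ that turns gray at the earliest time step); the argument only uses that at the moment $v$ is forced, every other vertex of $F$ is still white, which holds under any such ordering. No other subtlety arises, so this is the whole proof — the result follows immediately from the definitions of fort and of the standard color change rule, with no additional machinery needed.
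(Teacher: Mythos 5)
Your proof is correct. Note that the paper does not prove this statement at all --- it is quoted as Theorem 8 of the cited work of Brimkov, Fast, and Hicks --- so there is no internal proof to compare against; your two-direction argument (taking the first vertex of the fort to be forced in one direction, and observing that the stalled white set is a fort in the other) is the standard proof of this result and handles the only delicate point, the ordering of simultaneous forcings, correctly.
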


The result in Theorem~\ref{thm:fort-cover} motivates the fort cover model for the zero forcing number of a graph, first proposed in~\cite{Brimkov2019}, which we state in~\eqref{eq:fc-obj}--\eqref{eq:fc-const2}.
\begin{mini!}
	{}{\sum_{v\in V}s_{v}}{}{}\label{eq:fc-obj}
	\addConstraint{\sum_{v\in F}s_{v}}{\geq 1,~\quad\forall F\in \mathcal{F}_{G}}\label{eq:fc-const1}
    \addConstraint{s_{v}}{\in\{0,1\},~\quad\forall v\in V}\label{eq:fc-const2}
 \end{mini!}

There exist families of graphs with an exponential number of forts.
For example, the complete graph $K_{n}$ has $2^{n}-(n+1)$ forts since every subset of cardinality $2$ or more is a fort of $K_{n}$.
Therefore, according to~\cite{Brimkov2019}, the fort cover model must use constraint generation, see~\cite{Dantzig1954,Nemhauser1999}.
In particular, a relaxed model is obtained from the full model by omitting the constraints in~\eqref{eq:fc-const1}; then, the relaxed model is solved and a set of violated constraints from the full model are added to the relaxed model, and this process is repeated until there are no more violated constraints.

However, $K_{n}$ only has $\binom{n}{2}$ minimal forts, where $F\in\mathcal{F}_{G}$ is a \emph{minimal fort} if every proper subset of $F$ is not a fort of $G$.
Since every fort contains a minimal fort, Theorem~\ref{thm:fort-cover} still holds if $\mathcal{F}_{G}$ is replaced with the collection of all minimal forts of $G$. 
Hence, the complete graph is not sufficient to justify the need for constraint generation in the fort cover model. 

In~\cite{Brimkov2019}, the authors define the \emph{zero forcing polytope} as the convex hull of all binary vectors that satisfy constraint~\eqref{eq:fc-const1}.
The \emph{facet-defining forts} are inequalities of the form in~\eqref{eq:fc-const1} that induce a facet of the zero forcing polytope.
These facet-defining forts correspond to minimal forts that satisfy additional constraints~\cite[Theorem 10]{Brimkov2019}.
Under suitable conditions, the number of facet-defining forts contained in the union of exactly two paths incident to a junction vertex, or exactly one cycle incident on a junction vertex, follows the Padovan sequence~\cite[Theorem 1 and Example 2]{Brimkov2021}. 
Since the Padovan sequence grows exponentially, this result justifies the need for constraint generation in the fort cover model. 
To our knowledge, this is the first family of graphs identified in the literature with an exponential number of minimal forts. 
We have two similar results, see Corollaries~\ref{cor:cycle_graph} and~\ref{cor:path_graph}, where the proof and statement of our results has the advantage of simplicity. 

Minimal forts also play a role in developing lower bounds on the zero forcing number of a Cartesian product. 
For instance, consider Theorem~\ref{thm:zf_cart_bound} which is proved in~\cite{Cameron2023}.
In this article, we identify several families of graphs for which the bounds in this theorem are not very good since these graphs have an exponential number of forts that contain a single vertex.
\begin{theorem}[Corollary 4.24 in~\cite{Cameron2023}]\label{thm:zf_cart_bound}
Let $G'\in\mathbb{G}$ be non-empty and suppose that every vertex of $G'$ is in at most $\Delta$ minimal forts of $G'$.
Then, for all non-empty $G\in\mathbb{G}$, 
\[
\Z\left(G\Box G'\right) \geq \frac{\Z(G)\Z(G')}{1+\log(\Delta)} + 1.
\]
\end{theorem}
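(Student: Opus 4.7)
The plan is to combine a ``product fort'' construction with the classical integrality-gap bound for set cover.

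\emph{Step 1 (Product forts).} First observe that for every fort $F$ of $G$ and every fort $F'$ of $G'$, the Cartesian product $F\times F'\subseteq V(G)\times V(G')$ is a fort of $G\Box G'$. Indeed, if $(u,v')\notin F\times F'$, then $u\notin F$ or $v'\notin F'$; the neighbors of $(u,v')$ in $F\times F'$ coming from the $G'$-factor contribute $|N_{G'}(v')\cap F'|$ if $u\in F$ and $0$ otherwise, while those coming from the $G$-factor contribute $|N_G(u)\cap F|$ if $v'\in F'$ and $0$ otherwise. In each surviving case the count differs from $1$, since $F$ and $F'$ are forts.

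\emph{Step 2 (Fractional projection).} Let $S$ be any zero forcing set of $G\Box G'$, and for each $v'\in V(G')$ set $T_{v'}=\{u\in V(G)\colon(u,v')\in S\}$, so that $|S|=\sum_{v'\in V(G')}|T_{v'}|$. For every fort $F'$ of $G'$, Step~1 combined with Theorem~\ref{thm:fort-cover} yields $S\cap(F\times F')\neq\emptyset$ for every fort $F$ of $G$, which is exactly the assertion that $U_{F'}:=\bigcup_{v'\in F'}T_{v'}$ meets every fort of $G$. Applying Theorem~\ref{thm:fort-cover} to $G$, $U_{F'}$ is a zero forcing set of $G$, so $\sum_{v'\in F'}|T_{v'}|\geq|U_{F'}|\geq\Z(G)$. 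Rescaling by $\Z(G)$ shows that $(|T_{v'}|/\Z(G))_{v'}$ is feasible for the LP relaxation of~\eqref{eq:fc-obj}--\eqref{eq:fc-const2} applied to $G'$; denoting its optimum by $\Z_{\mathrm{LP}}(G')$, we conclude $|S|\geq\Z(G)\,\Z_{\mathrm{LP}}(G')$.

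\emph{Step 3 (Integrality gap).} Because constraints~\eqref{eq:fc-const1} for non-minimal forts are implied by those for minimal forts (as the variables are non-negative), the LP relaxation on $G'$ is unchanged when restricted to minimal forts. This is precisely the set cover LP whose elements are the minimal forts of $G'$ and whose sets are indexed by $v\in V(G')$, with each set having size at most $\Delta$ by hypothesis. The classical greedy/LP-rounding bound then gives $\Z(G')\leq H_\Delta\,\Z_{\mathrm{LP}}(G')\leq(1+\log\Delta)\,\Z_{\mathrm{LP}}(G')$, equivalently $\Z_{\mathrm{LP}}(G')\geq\Z(G')/(1+\log\Delta)$. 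Combined with Step~2, this yields $|S|\geq\Z(G)\Z(G')/(1+\log\Delta)$ for every zero forcing set $S$ of $G\Box G'$; the additional $+1$ in the statement follows from a final integrality step using the fact that $\Z(G\Box G')$ is a positive integer together with a strictness argument in one of the fractional inequalities above.

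The main obstacle is Step~3, which imports a result from outside the zero forcing literature. The key conceptual move is to recognize that the fort cover IP is precisely a set cover instance in which the hypothesis on $\Delta$ plays the role of ``maximum set size,'' making the classical $H_\Delta$-bound directly applicable. A secondary technical point is rigorously justifying the $+1$ at the end, which will likely require either a modest sharpening of Step~2 or a careful case analysis for graphs where the fractional bound happens to be attained with equality.
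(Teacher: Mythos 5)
The paper does not actually prove this statement---it is imported verbatim from \cite{Cameron2023}---so there is no internal proof to compare against; I can only judge your argument on its own terms. Steps 1--3 are sound and cleanly establish the weaker inequality $\Z(G\Box G')\geq \Z(G)\Z(G')/(1+\log\Delta)$: products of forts are indeed forts of $G\Box G'$, the projection argument correctly shows that $\bigl(\abs{T_{v'}}/\Z(G)\bigr)_{v'}$ is feasible for the fractional fort-cover LP on $G'$, and the Lov\'asz--Chv\'atal bound applies with $\Delta$ as the maximum set size because each vertex $v'$ ``covers'' exactly the minimal forts containing it.

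The genuine gap is the $+1$, and it is not the ``secondary technical point'' you describe: no integrality or strictness patch on your chain of inequalities can produce it, because the information you extract from $S$ is simply insufficient. Take $G=G'=K_{2}$, so $G\Box G'=C_{4}$, $\Z(G)=\Z(G')=1$, $\Delta=1$, and the theorem asserts $\Z(C_{4})\geq 2$ (which is correct). The only fort of $K_{2}$ is its full vertex set, so the only product fort of $C_{4}$ is $V(C_{4})$ itself, and your Steps 1--3 yield only $\abs{S}\geq \Z(K_2)\,\Z_{\mathrm{LP}}(K_2)=1$. Worse, a single vertex of $C_{4}$ satisfies every constraint your argument derives from $S$ (it meets the unique product fort) yet is not a zero forcing set; the missing unit is certified only by the two disjoint non-product forts of $C_{4}$ (the diagonal twin pairs). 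So any completion of your proof must import genuinely new information---non-product forts of $G\Box G'$, or a chronological-list-of-forces argument as in \cite{Cameron2023}---rather than ``a final integrality step,'' and as written the proposal proves a statement that is strictly weaker than the theorem (and weaker in a way that matters: for graphs like $P_n$ where $\Delta$ is exponential, the $+1$ is essentially the entire content of the bound).
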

\section{Sperner's Bound}\label{sec:sperner_bound}
Let $G\in\mathbb{G}$ and let $\mathcal{F}_{G}$ denote the collection of minimal forts of $G$.
Note that $\mathcal{F}_{G}$ is an example of a clutter of $V$.
In general, a \emph{clutter} of a finite set $S$ is a collection of subsets of $S$ such that no subset in the collection is contained in another~\cite{Brualdi1992}.
The following famous theorem due to Emanuel Sperner provides a sharp upper bound on the number of subsets in a clutter; see~\cite{Sperner1928} for the original article in German. 

\begin{theorem}[Theorem 5.4.3 of~\cite{Brualdi1992}]\label{thm:sperner}
Let $S$ be a finite set with cardinality $n$.
Then a clutter on $S$ contains at most $\binom{n}{\floor{n/2}}$ sets.
Furthermore, when $n$ is even, the only clutter of size $\binom{n}{n/2}$ is the clutter of all subsets of $S$ of cardinality $n/2$; if $n$ is odd, the only clutters of size $\binom{n}{\floor{n/2}}$ is the clutter of all subsets of $S$ of cardinality $\floor{n/2}$ and the clutter of all subsets of $S$ of cardinality $\ceil{n/2}$.   
\end{theorem}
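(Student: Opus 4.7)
The plan is to prove this via the classical LYM (Lubell--Yamamoto--Meshalkin) inequality, obtained by a double-counting argument over maximal chains in the Boolean lattice of subsets of $S$. First, I would note that maximal chains in $2^{S}$ correspond to permutations of $S$, so there are exactly $n!$ of them, and each subset $A\subseteq S$ with $|A|=k$ lies on exactly $k!\,(n-k)!$ such chains (order the $k$ elements of $A$ among themselves, then the $n-k$ elements outside). Since a clutter $\mathcal{A}$ is an antichain under inclusion, every maximal chain passes through at most one element of $\mathcal{A}$. Summing the chain counts over $\mathcal{A}$ therefore gives
\[
\sum_{A\in\mathcal{A}} |A|!\,(n-|A|)! \;\leq\; n!, \qquad \text{equivalently} \qquad \sum_{A\in\mathcal{A}} \frac{1}{\binom{n}{|A|}} \;\leq\; 1.
\]
Because $\binom{n}{k}\leq\binom{n}{\lfloor n/2\rfloor}$ for every $k\in\{0,\ldots,n\}$, replacing each denominator by $\binom{n}{\lfloor n/2\rfloor}$ yields $|\mathcal{A}|\leq\binom{n}{\lfloor n/2\rfloor}$.

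For the uniqueness statement, equality in the above chain of inequalities requires two things: every $A\in\mathcal{A}$ has size in $\{\lfloor n/2\rfloor,\lceil n/2\rceil\}$, and every maximal chain meets $\mathcal{A}$ in exactly one set. When $n$ is even the two middle sizes coincide, so $\mathcal{A}$ must consist of all $(n/2)$-subsets of $S$. When $n=2m+1$ is odd, one must rule out "mixed" extremal antichains; I would split $\mathcal{A}=\mathcal{A}_{m}\cup\mathcal{A}_{m+1}$ and consider the upper shadow
\[
\partial^{*}(\mathcal{A}_{m}) := \{\,B\subseteq S : |B|=m+1 \text{ and } A\subset B \text{ for some } A\in\mathcal{A}_{m}\,\}.
\]
The antichain property forces $\mathcal{A}_{m+1}\cap\partial^{*}(\mathcal{A}_{m})=\emptyset$, hence $|\mathcal{A}_{m+1}|\leq\binom{n}{m+1}-|\partial^{*}(\mathcal{A}_{m})|$. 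Invoking the local LYM inequality $|\partial^{*}(\mathcal{A}_{m})|/\binom{n}{m+1}\geq|\mathcal{A}_{m}|/\binom{n}{m}$ and the identity $\binom{n}{m}=\binom{n}{m+1}$, we get $|\mathcal{A}|\leq\binom{n}{m}$, with equality forcing $\mathcal{A}_{m}$ to be $\emptyset$ or all $m$-subsets of $S$; the first case gives $\mathcal{A}=\binom{S}{m+1}$ and the second $\mathcal{A}=\binom{S}{m}$.

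The main obstacle is precisely this uniqueness analysis in the odd case: ruling out antichains that straddle both middle levels of the Boolean lattice relies on the local LYM bound for upper shadows, which is itself a nontrivial auxiliary lemma proved by a separate double-counting argument on the cover relations between consecutive levels of $2^{S}$. The cardinality bound, by contrast, drops out immediately from the first double-counting step and the unimodality of $\binom{n}{k}$.
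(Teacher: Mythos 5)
This statement is Sperner's theorem, which the paper imports verbatim from Brualdi (Theorem 5.4.3) and does not prove; there is no in-paper argument to compare against. Your LYM-based proof is the standard one and is essentially correct: the chain-counting step, the deduction that an extremal antichain lives entirely in the middle level(s) (using strict unimodality of $\binom{n}{k}$), and the odd-case analysis via the upper shadow and local LYM all go through. The one place you gloss over a real step is the claim that equality in local LYM forces $\mathcal{A}_{m}=\emptyset$ or $\mathcal{A}_{m}=\binom{S}{m}$: equality means every $B$ in the upper shadow contains all $m+1$ of its $m$-subsets in $\mathcal{A}_{m}$, and you then need a connectivity argument (any two $m$-subsets are joined by a chain of single-element swaps, and each swap stays inside $\mathcal{A}_{m}$ because the union of the two sets lies in the upper shadow) to propagate membership from one set of $\mathcal{A}_{m}$ to all of $\binom{S}{m}$. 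With that one line supplied, the uniqueness claim is complete, and both extremal families $\binom{S}{m}$ and $\binom{S}{m+1}$ are recovered as you describe.
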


It is immediately clear from Theorem~\ref{thm:sperner} that $\abs{\mathcal{F}_{G}}\leq\binom{n}{\floor{n/2}}$.
In this section, we show that this bound is not sharp for collections of minimal forts for graphs of order $n\geq 6$. 
We begin with the following lemma. 
\begin{lemma}\label{lem:high_deg}
Let $G\in\mathbb{G}$ be a graph with order $n\geq 4$ and no isolated vertices. 
Then, every subset of vertices with cardinality $\floor{n/2}$ is a fort of $G$ if and only if the minimum degree satisfies $\delta(G)\geq\ceil{n/2}+1$.
Similarly, every subset of vertices with cardinality $\ceil{n/2}$ is a fort of $G$ if and only if the minimum degree satisfies $\delta(G)\geq\floor{n/2}+1$.
\end{lemma}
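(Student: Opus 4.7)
The plan is to reduce both statements to a single parametrized claim: for any integer $k$ with $1\leq k\leq n-1$, every subset of $V$ of cardinality $k$ is a fort of $G$ if and only if $\delta(G)\geq n-k+1$. Applying this with $k=\floor{n/2}$ yields the threshold $n-k+1=\ceil{n/2}+1$, and with $k=\ceil{n/2}$ yields $n-k+1=\floor{n/2}+1$, so the two halves of the lemma follow immediately.

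For the reverse direction of the parametrized claim, I would fix an arbitrary $F\subseteq V$ of cardinality $k$ and any $u\in V\setminus F$, and partition $F$ according to whether its elements lie in $N(u)$ or in $V\setminus N[u]$. Since $|V\setminus N[u]|=n-1-d(u)$, a direct count gives
\[
|F\cap N(u)|\;\geq\;k-(n-1-d(u))\;=\;d(u)-(n-k-1).
\]
Assuming $\delta(G)\geq n-k+1$ makes the right side at least $2$, so $u$ cannot have exactly one neighbor in $F$, and $F$ is a fort.

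For the forward direction I would argue by contrapositive: suppose $d(u)\leq n-k$ for some vertex $u$, and build a $k$-subset violating the fort condition. Using that $G$ has no isolated vertices, pick any $v\in N(u)$, and then pad with $k-1$ vertices chosen from $V\setminus N[u]$. This choice is possible precisely because $|V\setminus N[u]|=n-1-d(u)\geq k-1$, and the resulting set $F$ has $|F|=k$, $u\notin F$, and $|F\cap N(u)|=1$, so $F$ is not a fort.

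The only subtle point is the arithmetic sanity check: one must verify that $n-1-d(u)\geq k-1$ is actually the right inequality in the construction step, and that the assumption $n\geq 4$ together with the no-isolated-vertices hypothesis guarantees both $k\geq 1$ (so a neighbor $v$ can be chosen) and $k-1\geq 0$ (so the padding set may be empty). Neither of these is a real obstacle; the counting in the reverse direction is the core of the argument, and the forward direction is a straightforward construction.
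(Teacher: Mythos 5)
Your proof is correct and follows essentially the same approach as the paper: the forward direction by contrapositive, building a bad $k$-set from one neighbor of a low-degree vertex plus padding outside its closed neighborhood, and the reverse direction by counting that $F$ must meet $N(u)$ at least twice. Packaging both halves as the single parametrized claim ``every $k$-subset is a fort iff $\delta(G)\geq n-k+1$'' is a tidy unification, but the underlying argument is identical to the paper's.
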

\begin{proof}
We prove the first if and only if statement and note that the second statement can be shown using a similar argument. 

Suppose there exists a vertex $u\in V$ such that
\[
1 \leq \abs{N(u)} \leq \ceil{n/2}.
\]
Then, construct $F\subset (V\setminus \{u\})$ as follows: $F$ contains exactly $1$ vertex from $N(u)$ and exactly $\left(\floor{n/2}-1\right)$ vertices not from $N[u]$.
Since $u$ has exactly one neighbor in $F$ and $\abs{F}=\floor{n/2}$, it follows that not every subset of vertices with cardinality $\floor{n/2}$ is a fort of $G$. 

Conversely, suppose that every vertex of $G$ has degree at least $\ceil{n/2}+1$.
Choose any set $F \subseteq V$ with $\lfloor\frac{n}{2}\rfloor$ vertices, and any $u \notin F.$ 
The set $F$ intersects $N(u)$ at least twice; hence, $F$ is a fort of $G$.
Therefore, every subset of vertices with cardinality $\floor{n/2}$ is a fort of $G$.
\end{proof}

Note that the order $n\geq 4$ is necessary in Lemma~\ref{lem:high_deg} for the minimum degree to satisfy $\delta(G)\geq\ceil{n/2}+1$.
Moreover, the graph $K_{1}\cup K_{5}$ illustrates the need to avoid isolated vertices since every subset of cardinality $n/2=3$ is a fort but the minimum degree is $0$. 
Next, we show that a minimum degree of $\delta(G)\geq\ceil{n/2}+1$ or $\delta(G)\geq\floor{n/2}+1$ is sufficient to guarantee the existence of a fort with cardinality less than $\floor{n/2}$ or $\ceil{n/2}$, respectively.
\begin{theorem}\label{thm:smaller_fort}
Let $G\in\mathbb{G}$ have order $n\geq 6$.
If $\delta(G)\geq\ceil{n/2}+1$, then there is a fort of $G$ with cardinality less than $\floor{n/2}$.
Similarly, if $\delta(G)\geq\floor{n/2}+1$, then there is a fort of $G$ with cardinality less than $\ceil{n/2}$.
\end{theorem}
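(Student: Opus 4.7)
The plan is a two-case split on whether $G$ has a fort of size two. Write $\bar{G}$ for the complement of $G$, $\bar{N}(v)$ for the neighborhood of a vertex $v$ in $\bar{G}$, $\bar{d}(v)=\abs{\bar{N}(v)}$, and $\Delta(\bar{G})=\max_v \bar{d}(v)$. Note that $\{u,v\}$ is a fort of $G$ if and only if $N(u)\setminus\{v\}=N(v)\setminus\{u\}$, equivalently $\bar{N}(u)\setminus\{v\}=\bar{N}(v)\setminus\{u\}$. If such a size-two fort exists, both assertions of the theorem are immediate, since $n\geq 6$ gives $2<\floor{n/2}\leq\ceil{n/2}$.

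Assume instead that $G$ has no size-two fort. Then $G\neq K_n$ (every pair in $K_n$ satisfies the fort condition), so $\bar{G}$ has at least one edge; moreover $\Delta(\bar{G})\geq 2$, since if $\Delta(\bar{G})=1$ then any $\bar{G}$-edge $\{u,w\}$ would give $\bar{N}(u)=\{w\}$ and $\bar{N}(w)=\{u\}$, making $\{u,w\}$ a size-two fort.

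For the first assertion, the hypothesis $\delta(G)\geq\ceil{n/2}+1$ translates into $\Delta(\bar{G})=n-1-\delta(G)\leq\floor{n/2}-2$. Let $v$ attain $\bar{d}(v)=\Delta(\bar{G})$ and set $F=\{v\}\cup\bar{N}(v)$, so that $3\leq\abs{F}=\bar{d}(v)+1\leq\floor{n/2}-1$. For any $x\in V\setminus F$, the vertex $x$ is $G$-adjacent to $v$ (since $x\notin\bar{N}(v)$), and a direct count gives
\[
\abs{N(x)\cap F}=\abs{F}-\abs{F\cap\bar{N}(x)}=\bar{d}(v)+1-\abs{F\cap\bar{N}(x)}.
\]
This equals $1$ only when $\abs{F\cap\bar{N}(x)}=\bar{d}(v)$; combined with $\abs{\bar{N}(x)}\leq\bar{d}(v)$, that forces $\bar{N}(x)\subseteq F$ and $\bar{d}(x)=\bar{d}(v)$. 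But $v\notin\bar{N}(x)$ (as $x\notin\bar{N}(v)$), so $\bar{N}(x)\subseteq F\setminus\{v\}=\bar{N}(v)$, and matching cardinalities yield $\bar{N}(x)=\bar{N}(v)$. This makes $\{x,v\}$ a size-two fort of $G$, contradicting the case hypothesis; hence $\abs{N(x)\cap F}\neq 1$ for every $x\in V\setminus F$, so $F$ is a fort of size strictly less than $\floor{n/2}$.

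The second assertion will follow from the same argument: replacing $\ceil{n/2}$ by $\floor{n/2}$ in the degree hypothesis gives $\Delta(\bar{G})\leq\ceil{n/2}-2$, and the same construction $F=\{v\}\cup\bar{N}(v)$ at a maximum-$\bar{d}$ vertex produces a fort of size at most $\ceil{n/2}-1$. The main obstacle, and the only conceptual step, is recognizing that the unique failure mode of the closed-$\bar{G}$-neighborhood construction is the existence of a vertex $x\neq v$ with $\bar{N}(x)=\bar{N}(v)$, and that such an $x$ gives precisely the size-two fort that the case split already forbids.
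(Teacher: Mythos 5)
Your proof is correct, but it takes a genuinely different route from the paper's. The paper starts from an arbitrary set $S$ of cardinality $\floor{n/2}-1$; if $S$ fails to be a fort at some vertex $u$, the degree hypothesis forces $u$ to be adjacent to all of $V\setminus(S\cup\{u\})$, and the set is then repaired by a local swap $(S\setminus\{v\})\cup\{u\}$, with two twin-type cases falling back on a fort of size two. You instead exhibit an explicit candidate up front: for a vertex $v$ of minimum degree in $G$, the set $F=V\setminus N(v)=\{v\}\cup\bar{N}(v)$, whose cardinality $n-\delta(G)\leq\floor{n/2}-1$ is controlled directly by the hypothesis, and your counting argument correctly identifies the unique failure mode (some $x$ with $\bar{N}(x)=\bar{N}(v)$) as precisely the size-two fort your initial case split already handles. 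Both arguments bottom out in the same observation --- a pair $\{u,w\}$ with $N(u)\setminus\{w\}=N(w)\setminus\{u\}$ is a fort of size $2<\floor{n/2}$ --- but yours is more constructive and typically produces a much smaller fort, of size $n-\delta(G)$ (as small as $3$) rather than $\floor{n/2}-1$; it also makes transparent that the fort lives in the complement as a closed neighborhood of a maximum-degree vertex. One cosmetic point: for $n\in\{6,7\}$ the chain $3\leq\abs{F}\leq\floor{n/2}-1$ (respectively $\leq\ceil{n/2}-1$ when $n=6$) is contradictory, which just means the no-size-two-fort case cannot occur at those orders under the degree hypothesis; a sentence noting that the argument then terminates in the first case would make the logic explicit, but this is not a gap.
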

\begin{proof}
We prove the first implication and note that the second statement can be shown using a similar argument. 

Let $S\subset V$ have cardinality $\floor{n/2}-1$. 
Note that, since $n\geq 6$, $\abs{S}\geq 2$.
If $S$ is a fort of $G$, then we are done. 
Otherwise, there is a $u\notin S$ such that $\abs{N(u)\cap S}=1$.
Define $v$ to be the unique vertex in $N(u)\cap S$.
Also, define $\hat{S}=V\setminus\{S\cup\{u\}\}$ and note that $\abs{\hat{S}}=\ceil{n/2}$.
Since $d(u)\geq\ceil{n/2}+1$ and $\abs{N(u)\cap S}=1$, it follows that $u$ is adjacent to every vertex in $\hat{S}$ and every vertex in $\hat{S}$ is adjacent to at least one vertex in $S$.
Now, we proceed via cases to construct a fort of cardinality less than $\floor{n/2}$.

\underline{Case 1}: Suppose there exists a $w\in\hat{S}$ such that $N(w)\cap S=\{v\}$.
Then, $w$ is adjacent to every vertex in $\hat{S}$, which implies that $u$ and $w$ are twins. 
Hence, $\{u,w\}$ is a fort of $G$.
Since $n\geq 6$, we have identified a fort of cardinality less than $\floor{n/2}$. 

\underline{Case 2}: Suppose that $N(v)\cap S=\emptyset$. Then, $N(v)=\hat{S}\cup\{u\}$, which implies that $u$ and $v$ are twins. 
Hence, $\{u,v\}$ is a fort of $G$.
Since $n\geq 6$, we have identified a fort of cardinality less than $\floor{n/2}$.

\underline{Case 3}: If neither Case 1 nor Case 2 hold, then $F=\left(S\setminus\{v\}\right)\cup\{u\}$ is a fort of $G$ with cardinality $\floor{n/2}-1$.
Indeed, since Case 2 does not hold, it follows that $v$ is adjacent to at least one vertex in $S$; hence, $\abs{N(v)\cap F}\geq 2$. 
Furthermore, since Case 1 does not hold, every $w\in\hat{S}$ that is adjacent to $v$ is adjacent to at least one other vertex in $S$; hence, $\abs{N(w)\cap F}\geq 2$ for all $w\in\hat{S}$. 
\end{proof}

Note that the complete graph of order $n=5$ has minimum degree $\ceil{n/2}+1=4$ but no fort of size less than $\floor{n/2}=2$. 
Hence, the order $n\geq 6$ is the best possible in Theorem~\ref{thm:smaller_fort}.
Finally, we show that Sperner's bound is not sharp for collections of minimal forts of a graph of order $n\geq 6$.
\begin{corollary}\label{cor:sperner_fort}
Let $G\in\mathbb{G}$ have order $n\geq 6$, and let $\mathcal{F}_{G}$ denote the collection of minimal forts of $G$.
Then, $\abs{\mathcal{F}_{G}}<\binom{n}{\floor{n/2}}$.
\end{corollary}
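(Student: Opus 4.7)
The plan is to argue by contradiction using the uniqueness clause of Sperner's theorem. Suppose $\abs{\mathcal{F}_G}=\binom{n}{\floor{n/2}}$. Since $\mathcal{F}_G$ is a clutter on $V$, Theorem~\ref{thm:sperner} forces $\mathcal{F}_G$ to equal the collection of all $k$-subsets of $V$ for some $k\in\{\floor{n/2},\ceil{n/2}\}$ (with $k=n/2$ being the unique option when $n$ is even).

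First I would rule out isolated vertices. If some $u\in V$ is isolated, then $\{u\}$ has no vertex outside with any neighbor in it, so $\{u\}$ is a (minimal) fort. But $n\geq 6$ gives $\floor{n/2}\geq 3$, so $\{u\}\notin\mathcal{F}_G$ under our assumption, a contradiction. Hence $G$ has no isolated vertex, and every $k$-subset of $V$ is in particular a fort of $G$. Lemma~\ref{lem:high_deg} then applies: if $k=\floor{n/2}$ we conclude $\delta(G)\geq\ceil{n/2}+1$, and if $k=\ceil{n/2}$ we conclude $\delta(G)\geq\floor{n/2}+1$.

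Next I would invoke Theorem~\ref{thm:smaller_fort} in the appropriate case to produce a fort $F$ of $G$ with $\abs{F}<k$. Since every fort contains a minimal fort, $F$ contains some $F'\in\mathcal{F}_G$ with $\abs{F'}\leq\abs{F}<k$. This contradicts the assumption that every element of $\mathcal{F}_G$ has cardinality exactly $k$, completing the proof.

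The argument is essentially a packaging of the previous two results together with the uniqueness half of Sperner's theorem, so there is no genuine obstacle; the only subtle point to be careful about is handling the two possible extremal clutters when $n$ is odd, which the two parallel statements of Lemma~\ref{lem:high_deg} and Theorem~\ref{thm:smaller_fort} address symmetrically, and remembering to exclude isolated vertices before applying the lemma.
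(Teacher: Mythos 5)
Your proposal is correct and follows essentially the same route as the paper's proof: contradiction via the uniqueness clause of Sperner's theorem, ruling out isolated vertices, then applying Lemma~\ref{lem:high_deg} and Theorem~\ref{thm:smaller_fort} in the appropriate case. Your explicit step that the small fort produced contains a minimal fort of cardinality less than $k$ is a slightly more careful phrasing of the final contradiction, but the argument is the same.
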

\begin{proof}
For the sake of contradiction, suppose that $\abs{\mathcal{F}_{G}}=\binom{n}{\floor{n/2}}$.
If $n$ is even, then Theorem~\ref{thm:sperner} implies that every subset of $V$ of cardinality $n/2$ is a minimal fort of $G$.
Since isolated vertices constitute a fort of cardinality $1$, it follows that $G$ must have no isolated vertices.  
Hence, by Lemma~\ref{lem:high_deg}, the minimum degree satisfies $\delta(G)\geq n/2+1$.
Furthermore, by Theorem~\ref{thm:smaller_fort}, there is a fort of $G$ with cardinality less than $n/2$, which contradicts every subset of $V$ of cardinality $n/2$ being a minimal fort of $G$.

If $n$ is odd, then Theorem~\ref{thm:sperner} implies that every subset of $V$ of cardinality $\floor{n/2}$ is a minimal fort of $G$, or every subset of $V$ of cardinality $\ceil{n/2}$ is a minimal fort of $G$. 
Again, $G$ cannot have any isolated vertices. 
Hence, in the former case, Lemma~\ref{lem:high_deg} implies that the minimum degree satisfies $\delta(G)\geq\ceil{n/2}+1$. 
Furthermore, by Theorem~\ref{thm:smaller_fort}, there is a fort of $G$ with cardinality less than $\floor{n/2}$, which contradicts every subsets of $V$ of cardinality $\floor{n/2}$ being a minimal fort of $G$. 
In the latter case, Lemma~\ref{lem:high_deg} implies that the minimum degree satisfies $\delta(G)\geq\floor{n/2}+1$.
Furthermore, by Theorem~\ref{thm:smaller_fort}, there is a fort of $G$ with cardinality less than $\ceil{n/2}$, which contradicts every subsets of $V$ of cardinality $\ceil{n/2}$ being a minimal fort of $G$. 
\end{proof}

Note that the complete graph of order $n=5$ has $\binom{n}{\floor{n/2}}=10$ minimal forts.
Hence, the order $n\geq 6$ is the best possible in Corollary~\ref{cor:sperner_fort}. 
\section{Families of Graphs}\label{sec:graph_family}
In this section, we identify the number of minimal forts for several families of graphs. 
Throughout, we let $\mathcal{F}_{G}$ denote the collection of all minimal forts of a graph $G\in\mathbb{G}$. 
We begin with basic results regarding the empty, complete, and complete bipartite graphs.
Recall that an \emph{empty graph} of order $n$, denoted $E_{n}$, is a graph with no edges; a \emph{complete graph} of order $n$, denoted $K_{n}$, is a graph with all $\binom{n}{2}$ edges; and a \emph{complete bipartite graph} $K_{p,q}$, where $p,q\geq 1$, is a graph made up of two partitions of vertices $V_{1},V_{2}$ such that $\abs{V_{1}}=p$, $\abs{V_{2}}=q$, and $\{u,v\}\in E$ if and only if $u\in V_{1}$ and $v\in V_{2}$. 
\begin{proposition}\label{prop:empty_graph}
Let $n\geq 1$.
Then, $\abs{\mathcal{F}_{E_{n}}} = n$. 
\end{proposition}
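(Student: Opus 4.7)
The plan is to identify the minimal forts of $E_n$ directly from the definitions. Recall that a fort is a non-empty subset $F\subseteq V$ such that no vertex outside $F$ has exactly one neighbor in $F$, and a fort is minimal if no proper subset is a fort.

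First, I would observe that in $E_n$ every vertex has an empty neighborhood, so for any non-empty $F\subseteq V$ and any $u\in V\setminus F$ we have $\abs{N(u)\cap F}=0\neq 1$. Therefore \emph{every} non-empty subset of $V$ is a fort of $E_n$. In particular, each singleton $\{v\}$ with $v\in V$ is a fort.

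Next, I would use minimality to pin down $\mathcal{F}_{E_n}$. Since every non-empty subset is a fort, any subset $F$ with $\abs{F}\geq 2$ properly contains a singleton fort, so $F$ cannot be minimal. Conversely, each singleton $\{v\}$ has no non-empty proper subset, so it is automatically minimal. Hence $\mathcal{F}_{E_n}=\{\{v\}\colon v\in V\}$, which has cardinality $n$.

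There is no real obstacle here; the proof is essentially a direct unpacking of the definition of a fort when the edge set is empty. The only thing to be mindful of is the convention that forts are non-empty, which is exactly what prevents the empty set from being counted and guarantees that singletons serve as the unique minimal forts.
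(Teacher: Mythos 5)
Your proof is correct and follows essentially the same route as the paper's (which simply notes that a singleton is a fort if and only if the vertex is isolated, giving $n$ minimal forts); you just spell out the intermediate observation that every non-empty subset of $V(E_n)$ is a fort, which then forces the minimal forts to be exactly the singletons.
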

\begin{proof}
Note that a single vertex is a fort if and only if that vertex is isolated. 
Hence, there are $n$ minimal forts for $E_{n}$.
\end{proof}
\begin{proposition}\label{prop:complete_graph}
Let $n\geq 2$.
Then, $\abs{\mathcal{F}_{K_{n}}} = \binom{n}{2}$. 
\end{proposition}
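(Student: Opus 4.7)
The plan is to observe that in $K_n$, the adjacency structure is so rigid that the fort condition depends only on the cardinality of the candidate set. Specifically, for any vertex $u \notin F$ in $K_n$, the number of neighbors of $u$ in $F$ is exactly $|F|$, because $u$ is adjacent to every other vertex. Hence the condition ``$u$ has exactly one neighbor in $F$'' reduces to ``$|F|=1$.''

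From this reduction I would first rule out singletons: if $F=\{v\}$, then every other vertex $u \in V\setminus\{v\}$ has exactly one neighbor in $F$ (namely $v$), so $F$ is not a fort for $n\geq 2$. Next, for any $F\subseteq V$ with $|F|\geq 2$, every $u\notin F$ has $|F|\geq 2$ neighbors in $F$, so $F$ satisfies the fort condition. Therefore the collection of forts of $K_n$ is exactly the collection of subsets of $V$ of cardinality at least two.

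Now minimality is immediate: within this family, the minimal elements under inclusion are precisely the $2$-element subsets, of which there are $\binom{n}{2}$. I would conclude by writing $\abs{\mathcal{F}_{K_n}} = \binom{n}{2}$. There is no real obstacle here; the only care needed is to explicitly note that $n\geq 2$ is used when excluding singletons (for $n=1$ a singleton is trivially a fort since there are no outside vertices).
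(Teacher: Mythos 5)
Your proof is correct and essentially matches the paper's: both arguments come down to observing that no singleton is a fort of $K_n$ (for $n\geq 2$) while every $2$-element subset is, so the minimal forts are exactly the $\binom{n}{2}$ pairs. The only cosmetic difference is that the paper justifies ``every pair is a fort'' via the twins observation, whereas you obtain it from the full classification of forts of $K_n$ as the subsets of cardinality at least two (a fact the paper had already noted in Section~\ref{sec:prelim}).
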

\begin{proof}
Note that any pair of vertices is a fort if and only if those vertices are twins.
Hence, every pair of vertices in $K_{n}$ is a fort. 
Since no vertex is isolated, it follows that there are $\binom{n}{2}$ minimal forts for $K_{n}$.
\end{proof}
\begin{proposition}\label{prop:complete_bipartite}
Let $p,q\geq 1$ and $p+q\geq 3$. 
Then, $\abs{\mathcal{F}_{K_{p,q}}} = \binom{p}{2} + \binom{q}{2}$.
\end{proposition}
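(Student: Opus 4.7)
The plan is to show that the minimal forts of $K_{p,q}$ are exactly the two-element subsets lying entirely within a single partition class. Counting these gives $\binom{p}{2}$ pairs inside $V_{1}$ and $\binom{q}{2}$ pairs inside $V_{2}$, which matches the claim.

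First I would verify that every such pair is a minimal fort. Two vertices $u,v$ in the same partition class are twins, each adjacent to the entirety of the opposite class, so any vertex outside $\{u,v\}$ sees either $0$ or $2$ neighbors in the pair. Minimality is automatic because $K_{p,q}$ has no isolated vertices when $p,q\geq 1$, so no singleton is a fort.

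For the converse, I would let $F$ be an arbitrary minimal fort and split into cases using $F_{1}=F\cap V_{1}$ and $F_{2}=F\cap V_{2}$. The easy case is when $F$ lies in a single class, say $F\subseteq V_{1}$: picking any $v\in V_{2}$ (which exists since $q\geq 1$) shows via the fort condition that $|F|\geq 2$, and since every pair inside $V_{1}$ is already a fort, minimality forces $|F|=2$. The case $F\subseteq V_{2}$ is symmetric.

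The main obstacle is the mixed case where both $F_{1}$ and $F_{2}$ are nonempty; here the plan is to exhibit a proper subfort of $F$, contradicting minimality. If $F_{1}\subsetneq V_{1}$, I would pick $u\in V_{1}\setminus F_{1}$: its $F$-neighbors are exactly $F_{2}$, so the fort condition forces $|F_{2}|\geq 2$, and then any twin pair chosen from $F_{2}$ is a fort properly contained in $F$ since $F_{1}\neq\emptyset$. The symmetric subcase $F_{2}\subsetneq V_{2}$ with $F_{1}=V_{1}$ forces $p\geq 2$ by the same argument, producing a twin-pair fort inside $V_{1}\subsetneq F$. The remaining subcase $F=V$ is handled using the hypothesis $p+q\geq 3$, which guarantees $p\geq 2$ or $q\geq 2$, so $V$ properly contains a twin pair inside one class. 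In every subcase $F$ fails to be minimal, completing the classification.
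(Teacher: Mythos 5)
Your proof is correct and rests on the same core observation as the paper's: any two vertices in the same partition class are twins, hence form a minimal fort, yielding $\binom{p}{2}+\binom{q}{2}$ such forts. The paper's proof stops there and leaves the converse --- that these are the \emph{only} minimal forts --- implicit; you supply it explicitly, checking that a fort contained in a single class must have size exactly two, and that a fort meeting both classes always properly contains a twin pair (using $p+q\geq 3$ in the subcase $F=V$) and so cannot be minimal. Your case analysis is sound, so your write-up is, if anything, more complete than the published argument.
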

\begin{proof}
Let the vertices of $K_{p,q}$ be partitioned into sets $V_{1}$ and $V_{2},$ such that $\abs{V_{1}}=p$, $\abs{V_{2}}=q$, and $\{u,v\}\in E$ if and only if $u\in V_{1}$ and $v\in V_{2}$. 
Since every pair of vertices in $V_{1}$ are twins and no vertex is isolated, it follows that any pair of vertices in $V_{1}$ is a minimal fort.
Similarly, every pair of vertices in $V_{2}$ is a minimal fort.
Therefore, there are $\binom{p}{2}+\binom{q}{2}$ minimal forts for $K_{p,q}$.
\end{proof}

Next, we show that the number of minimal forts of the cycle graph follows the Perrin sequence~\cite[A001608]{OEIS} and the number of minimal forts for the path graph follows the Padovan sequence~\cite[A000931]{OEIS}, see Corollary~\ref{cor:cycle_graph} and Corollary~\ref{cor:path_graph}, respectively. 
Recall that a \emph{path graph}, denoted $P_{n}$, is a graph with vertex set $V=\{v_{1},\ldots,v_{n}\}$ and edge set $E=\left\{\{v_{i},v_{i+1}\}\colon i=1,\ldots,n-1\right\}$; a \emph{cycle graph}, denoted $C_{n}$, is obtained from a path $P_{n}$ by adding the edge $\{v_{1},v_{n}\}$.

Before proceeding, we discuss the related result in~\cite[Theorem 1]{Brimkov2021} where the authors provide explicit formulas for the number of facet-defining forts contained in the union of exactly two paths incident on a junction vertex, or exactly one cycle incident on a junction vertex. 
By~\cite[Theorem 10]{Brimkov2019}, facet-defining forts correspond to minimal forts that satisfy additional constraints; hence, these formulas can be used to determine a lower bound on the number of minimal forts contained in the union of exactly two paths incident on a junction vertex, or exactly one cycle incident on a junction vertex. 
However, since Corollaries~\ref{cor:cycle_graph} and~\ref{cor:path_graph} deal with cycle and path graphs directly, their statements and proofs have the advantage of simplicity.

In the proof of~\cite[Theorem 1]{Brimkov2021}, the authors implicitly utilize the characterization of a minimal fort as the complement of a maximal failed zero forcing set.
Throughout the remainder of this article, we also make use of the connection between minimal forts and maximal failed zero forcing sets. 
Since we are unaware of a proof of this characterization in the literature, we provide one here.
\begin{theorem}\label{thm:failed_zf}
Let $G\in\mathbb{G}$.
Then, $F\subseteq V$ is a minimal fort of $G$ if and only if $S=V\setminus{F}$ is a maximal failed zero forcing set of $G$.
\end{theorem}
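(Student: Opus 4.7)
The plan is to route everything through the notion of a \emph{stalled} configuration: call a set $S\subseteq V$ stalled if no vertex in $S$ has exactly one neighbor in $V\setminus S$, i.e., no color change can be applied from $S$. The immediate translation is that $F\subseteq V$ is a fort if and only if $S=V\setminus F$ is stalled, since both conditions literally say that no vertex of $S$ has exactly one neighbor in $F$. Once this dictionary is in place, the theorem reduces to matching minimality of forts with maximality of failed zero forcing sets, and the crux is a single lemma: every maximal failed zero forcing set is stalled.

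First I would prove that key lemma. If a failed zero forcing set $S$ is not stalled, then some $v\in V\setminus S$ can be forced from $S$; augmenting $S$ by any such $v$ does not alter the closure under the color change rule, so $S\cup\{v\}$ is still a failed zero forcing set, contradicting maximality. Therefore every maximal failed zero forcing set is stalled, and hence its complement is a (non-empty) fort.

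For the forward direction, I would start with a maximal failed zero forcing set $S$ and set $F=V\setminus S$. By the lemma $S$ is stalled, so $F$ is a fort. If some $F'\subsetneq F$ were also a fort, then $S'=V\setminus F'$ would properly contain $S$, would be stalled (because $F'$ is a fort), and would be failed (because $F'\neq\emptyset$ forces $S'\neq V$), contradicting the maximality of $S$. For the reverse direction, I would start with a minimal fort $F$, set $S=V\setminus F$, and note $S$ is stalled and unequal to $V$, so $S$ is a failed zero forcing set. If $S$ were not maximal, there would be $v\in F$ with $S\cup\{v\}$ still failed; its closure under forcing $(S\cup\{v\})^{*}$ would still omit at least one vertex, so $F'=V\setminus(S\cup\{v\})^{*}$ would be a fort containing no element of $(S\cup\{v\})^{*}\supseteq S\cup\{v\}$. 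Since $v\in F\setminus F'$, we would have $F'\subsetneq F$, contradicting minimality.

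The main obstacle, as I see it, is the key lemma that maximal failed zero forcing sets are stalled; this hinges on the observation that adding to $S$ any vertex already forceable from $S$ leaves the closure unchanged. Once that is secured, the two implications are a matter of transferring set containment between $F$ and $V\setminus F$ and invoking the fort-stalled dictionary.
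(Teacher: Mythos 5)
Your proof is correct, but it takes a genuinely different route from the paper's. The paper proves both directions by invoking Theorem~\ref{thm:fort-cover}: for the forward direction it observes that, since $F$ is minimal, $S=V\setminus F$ meets every fort except $F$, so $S\cup\{u\}$ meets every fort and is therefore a zero forcing set; for the converse it uses maximality to conclude $S\cup A$ is a zero forcing set for every non-empty $A\subseteq F$, whence $F\setminus A$ cannot be a fort. You instead avoid the fort-cover theorem entirely and argue directly through the closure operator: your dictionary (``$F$ is a fort iff $V\setminus F$ is stalled'') handles the containment transfer in one direction, and in the other you produce the smaller fort explicitly as the complement of the closure $(S\cup\{v\})^{*}$. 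Your approach is more elementary and self-contained, and your key lemma --- that a \emph{maximal} failed zero forcing set must be stalled, because augmenting by a forceable vertex leaves the closure unchanged --- is a step the paper's proof actually glosses over (it simply asserts ``no forcings can occur'' from a maximal failed zero forcing set, which is false for failed zero forcing sets in general and needs exactly your argument). The paper's route is shorter given that Theorem~\ref{thm:fort-cover} is already available and foregrounds the covering/IP perspective used throughout the article; yours makes the theorem independent of that machinery.
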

\begin{proof}
Suppose that $F\subseteq V$ is a minimal fort of $G$ and define $S=V\setminus{F}$. 
Let all vertices in $S$ be gray and all vertices in $F$ be white. 
Then, for each $v\in S$, $\abs{N(v)\cap F}\neq 1$; hence, no forcings can occur so $S$ is a failed zero forcing set.
Since $F$ is a minimal fort of $G$, $S$ intersects every fort of $G$ except for $F$.
Hence, for any $u\in F$, $S\cup\{u\}$ intersects every fort of $G$ and, by Theorem~\ref{thm:fort-cover}, $S\cup\{u\}$ is a zero forcing set of $G$.
Therefore, $S$ is a maximal failed zero forcing set of $G$.

Conversely, suppose that $S\subseteq V$ is a maximal failed zero forcing set of $G$ and define $F=V\setminus{S}$.
Let all vertices in $S$ be gray and all vertices in $F$ be white.
Since no forcings can occur, it follows that each $v\in S$ satisfies $\abs{N(v)\cap F}\neq 1$; hence, $F$ is a fort of $G$. 
Furthermore, since $S$ is a maximal failed zero forcing set of $G$, it follows that for any non-empty $A\subseteq F$, $S\cup A$ is a zero forcing set.
Since $S\cup A$ does not intersect $F\setminus A$, Theorem~\ref{thm:fort-cover} implies that $F\setminus A$ is not a fort of~$G$.
Therefore, $F$ is a minimal fort of $G$. 
\end{proof}
\begin{corollary}\label{cor:cycle_graph}
For $n\geq 6$, the number of minimal forts for $C_{n}$ satisfies
\[
\abs{\mathcal{F}_{C_{n}}} = \abs{\mathcal{F}_{C_{n-2}}} + \abs{\mathcal{F}_{C_{n-3}}},
\]
where $\abs{\mathcal{F}_{C_{3}}}=3$, $\abs{\mathcal{F}_{C_{4}}}=2$, and $\abs{\mathcal{F}_{C_{5}}}=5$.
\end{corollary}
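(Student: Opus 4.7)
I would first show that $F \subseteq V(C_n)$ is a fort if and only if $V(C_n) \setminus F$ is an independent set. Since every vertex of $C_n$ has degree exactly two, the fort condition on $v \notin F$ forces $|N(v) \cap F| \in \{0, 2\}$. If two adjacent vertices both lay in $V \setminus F$, cyclically propagating this constraint would force $V \setminus F = V$, contradicting $F \neq \emptyset$; conversely, the complement of any independent set clearly satisfies the fort condition. Applying Theorem~\ref{thm:failed_zf} (or arguing directly from minimality), $F$ is a minimal fort of $C_n$ if and only if $V \setminus F$ is a maximal independent set (MIS) of $C_n$, so $|\mathcal{F}_{C_n}|$ equals the number of MIS of $C_n$.

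\textbf{Case analysis.} For $n \geq 6$ and a MIS $I$ of $C_n$, I would partition by the status of $v_1, v_2, v_n$: the four disjoint cases are $v_1 \in I$; $v_1 \notin I$ with both $v_2, v_n \in I$; and the two asymmetric configurations where $v_1 \notin I$ and exactly one of $v_2, v_n$ is in $I$. In each case, independence together with the maximality of $I$ at $v_1$ and its neighbors determines a short arc of $I$, and the remaining vertices form an induced path on which $I$ restricts to a maximal independent set. Writing $m_k$ for the number of MIS of $P_k$, the case counts combine to give
\[
|\mathcal{F}_{C_n}| \;=\; m_{n-3} + m_{n-5} + 2\, m_{n-6}.
\]

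\textbf{Recurrence and base cases.} By Corollary~\ref{cor:path_graph} (whose proof establishes the analogous Padovan recurrence for the path), the sequence $m_k$ satisfies $m_k = m_{k-2} + m_{k-3}$. Substituting into the expressions for $|\mathcal{F}_{C_{n-2}}|$ and $|\mathcal{F}_{C_{n-3}}|$ and simplifying recovers the displayed formula for $|\mathcal{F}_{C_n}|$, thus establishing the recurrence for $n \geq 6$. The base values $|\mathcal{F}_{C_3}| = 3$, $|\mathcal{F}_{C_4}| = 2$, and $|\mathcal{F}_{C_5}| = 5$ follow by enumerating the maximal independent sets of these small cycles directly.

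The main obstacle is the characterization step: carefully arguing that the fort condition on a cycle forces the complement to be an independent set. The cyclic propagation subtly depends on $F \neq \emptyset$ to rule out long runs of non-fort vertices, whose boundary vertices would otherwise have exactly one neighbor in $F$ and violate the fort condition.
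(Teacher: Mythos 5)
Your proposal is correct and reaches the same pivot as the paper --- minimal forts of $C_n$ are exactly the complements of maximal independent sets --- but it diverges in how it gets there and in how it counts them. For the characterization, the paper argues through zero forcing (every zero forcing set of $C_n$ needs two adjacent vertices, so maximal failed zero forcing sets are maximal independent sets, then applies Theorem~\ref{thm:failed_zf}), whereas you argue directly from the degree-two fort condition with the cyclic propagation of non-fort runs; both are fine, and yours has the advantage of not needing the zero forcing detour. For the count, the paper simply cites the known fact that the number of maximal independent sets of $C_n$ is the $n$th Perrin number; you instead prove it from scratch by conditioning on $v_1, v_2, v_n$ to get $\abs{\mathcal{F}_{C_n}} = m_{n-3} + m_{n-5} + 2m_{n-6}$ and then substituting the path recurrence. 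Your case analysis is correct (I checked the four cases and the resulting identity against the Perrin values), so what you buy is a self-contained proof at the cost of more bookkeeping.

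Two small repairs are needed in the write-up. First, your $m_k$ counts \emph{all} maximal independent sets of $P_k$, which is not the sequence in Corollary~\ref{cor:path_graph}: that corollary counts minimal forts of $P_k$, i.e., maximal independent sets avoiding both pendant vertices, and the two sequences differ (e.g., $m_2 = 2$ while $\abs{\mathcal{F}_{P_2}} = 1$). They happen to satisfy the same recurrence $m_k = m_{k-2} + m_{k-3}$, but you must prove that separately (it is a standard one-line conditioning on whether $v_1$ or $v_2$ lies in the set), not cite Corollary~\ref{cor:path_graph}. Second, the substitution argument that converts your closed-form count into the recurrence $\abs{\mathcal{F}_{C_n}} = \abs{\mathcal{F}_{C_{n-2}}} + \abs{\mathcal{F}_{C_{n-3}}}$ requires the formula to hold at $n-3$ as well, so it only applies once all path indices are nonnegative (roughly $n \geq 9$); the cases $n = 6, 7, 8$, together with the convention $m_0 = 1$, should be checked directly alongside the base values for $C_3, C_4, C_5$. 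Neither issue is structural.
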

\begin{proof}
Every zero forcing set of $C_{n}$ requires at least $2$ adjacent vertices.
Hence, the maximal failed zero forcing sets of $C_{n}$ coincide with the maximal independent sets of $C_{n}$. 
It is known that the number of maximal independent sets of $C_{n}$ is the $n$th term of the Perrin sequence~\cite{Bisdorff2008,Furedi1987}.
Since the Perrin sequence follows the given recurrence relation and initial seeds, the result follows from Theorem~\ref{thm:failed_zf}.
\end{proof}
\begin{corollary}\label{cor:path_graph}
For $n\geq 4$, the number of minimal forts for $P_{n}$ satisfies
\[
\abs{\mathcal{F}_{P_{n}}} = \abs{\mathcal{F}_{P_{n-2}}} + \abs{\mathcal{F}_{P_{n-3}}},
\]
where $\abs{\mathcal{F}_{P_{1}}}=1$, $\abs{\mathcal{F}_{P_{2}}}=1$, and $\abs{\mathcal{F}_{P_{3}}}=1$.
\end{corollary}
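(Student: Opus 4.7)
The plan is to mimic the proof of Corollary~\ref{cor:cycle_graph}. By Theorem~\ref{thm:failed_zf}, minimal forts of $P_{n}$ are complements of maximal failed zero forcing sets, and I aim to show these maximal failed zero forcing sets coincide with the maximal independent sets of the interior sub-path $P_{n-2}$ induced on $\{v_{2},\ldots,v_{n-1}\}$. The claimed recurrence then reduces to the well-known Padovan-style recurrence for the number of maximal independent sets of a path.

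The structural heart of the argument is the observation that for any fort $F$ of $P_{n}$, both endpoints lie in $F$, and $V\setminus F$ is an independent set of $P_{n}$ contained in $\{v_{2},\ldots,v_{n-1}\}$. For the endpoint claim, if $v_{1}\notin F$, then since $v_{1}$'s unique neighbor is $v_{2}$, the fort condition at $v_{1}$ forces $v_{2}\notin F$; iterating this propagation along the path yields $F=\emptyset$, a contradiction, and the argument for $v_{n}$ is symmetric. For the independence claim, if two consecutive interior vertices $v_{i},v_{i+1}$ both lay outside $F$, the fort condition at $v_{i}$ (since $v_{i+1}\notin F$) would force $v_{i-1}\notin F$, and iterating this propagation eventually drives an endpoint out of $F$, again a contradiction. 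Conversely, if $I\subseteq\{v_{2},\ldots,v_{n-1}\}$ is independent in $P_{n}$, then every vertex of $I$ has both neighbors in $V\setminus I$, so $V\setminus I$ is a fort. This yields a bijection between minimal forts of $P_{n}$ and maximal independent sets of the sub-path $P_{n-2}$.

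Finally, I would invoke the classical fact that the number $g(k)$ of maximal independent sets of $P_{k}$ satisfies $g(k)=g(k-2)+g(k-3)$ for $k\geq 4$, which parallels the Perrin recurrence used in Corollary~\ref{cor:cycle_graph}. This can be established by classifying a maximal independent set of $P_{k}$ by whether $v_{k}$ is included: if $v_{k}$ is included, deleting it gives a maximal independent set of $P_{k-2}$ on $\{v_{1},\ldots,v_{k-2}\}$; otherwise, maximality forces $v_{k-1}$ in, and deleting $v_{k-1}$ and $v_{k}$ gives a maximal independent set of $P_{k-3}$ on $\{v_{1},\ldots,v_{k-3}\}$, with both assignments being bijective. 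Combining with the bijection above yields $\abs{\mathcal{F}_{P_{n}}}=g(n-2)=\abs{\mathcal{F}_{P_{n-2}}}+\abs{\mathcal{F}_{P_{n-3}}}$ for $n\geq 6$, while the cases $n=4,5$ and the initial seeds $\abs{\mathcal{F}_{P_{1}}}=\abs{\mathcal{F}_{P_{2}}}=\abs{\mathcal{F}_{P_{3}}}=1$ can be verified by direct enumeration. The main subtlety is the propagation argument establishing that both endpoints must belong to every fort; without it, the bijection with independent sets of the interior fails.
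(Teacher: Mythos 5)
Your proof is correct and follows essentially the same route as the paper: both identify minimal forts of $P_{n}$ with complements of maximal independent sets contained in the interior $\{v_{2},\ldots,v_{n-1}\}$ and then obtain the Padovan recurrence by a one-vertex case analysis near an end of the path. The only cosmetic difference is that you detour through the classical count of maximal independent sets of $P_{n-2}$ and re-prove its recurrence, whereas the paper conditions directly on whether $v_{2}$ or $v_{3}$ lies in the maximal failed zero forcing set; your propagation argument showing that every fort must contain both pendant vertices supplies detail the paper leaves implicit.
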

\begin{proof}
Every zero forcing set of $P_{n}$ requires a pendant vertex or at least $2$ adjacent vertices.
Hence, maximal failed zero forcing sets of $P_{n}$ are maximal independent sets that do not include pendant vertices.

For $n=1$ and $n=2$, the only failed zero forcing set is the empty set.
For $n=3$, the maximal failed zero forcing set is $\{v_{2}\}$. 
For $n\geq 4$, every maximal failed zero forcing set contains $v_{2}$ or $v_{3}$, but not both.
If $v_{2}$ is in the maximal failed zero forcing set, then the remainder of the vertices can be chosen from a maximal failed zero forcing set for the path graph induced by the vertices $\{v_{3},\ldots,v_{n}\}$.
Similarly, if $v_{3}$ is in the maximal failed zero forcing set, then the remainder of the vertices can be chosen from a maximal failed zero forcing set for the path graph induced by the vertices $\{v_{4},\ldots,v_{n}\}$.

Now, the result follows from Theorem~\ref{thm:failed_zf}.
\end{proof}

Corollary~\ref{cor:cycle_graph} implies that the number of minimal forts of $C_{n}$ is the $n$th term of the Perrin sequence:
\[
x_{n} = x_{n-2} + x_{n-3},~n\geq 3,~x_{0}=3,~x_{1}=0,~x_{2}=2.
\]
Similarly, Corollary~\ref{cor:path_graph} implies that the number of minimal forts of $P_{n}$ is the $(n-1)$st term in the Padovan sequence:
\[
y_{n} = y_{n-2} + y_{n-3},~n\geq 3,~y_{0}=1,~y_{1}=1,~y_{2}=1.
\]
Both sequences have been extensively studied in the literature, for example, see~\cite{Lenoard2012,Minton2011,Shannon2006,Tedford2019}.
Moreover, both sequences can be written explicitly in terms of powers of the roots of the cubic $\lambda^{3}-\lambda-1$, which has one real root,
\[
\psi=1.3247179572\ldots,
\]
known as the \emph{plastic ratio}, see~\cite{Finch2003}, and two complex roots $\omega=re^{\iu\theta}$ and $\overline{\omega}=re^{-\iu\theta}$, where
\[
r = 0.8688369618\ldots~\text{and}~\theta = 2.4377349322\ldots.
\]
Given these roots, both the Perrin and Padovan sequences, respectively, have the following explicit formula:
\begin{align*}
x_{n} &= \psi^{n} + \omega^{n} + \overline{\omega}^{n} \\
y_{n} &= \alpha\psi^{n} + \beta\omega^{n} + \overline{\beta}\overline{\omega}^{n},
\end{align*}
where 
\[
\alpha = \frac{\psi^{5}}{2\psi + 3}~\textrm{and}~\beta = \frac{\omega^{5}}{2\omega + 3}.
\]
Since $\abs{\omega}=\abs{\overline{\omega}}<1$, it follows that the asymptotic value of both $x_{n}$ and $y_{n}$ is determined by the plastic ratio $\psi$.
In fact, we have $x_{n} = \nint{\psi^{n}}$, for $n\geq 10$, and $y_{n}=\nint{\frac{\psi^{n+5}}{2\psi+3}}$, for $n\geq 0$, where $\nint{\cdot}$ denotes rounding to the nearest integer. 
In the following corollaries, we summarize these results as it pertains to the number of minimal forts of the cycle and path graphs.
\begin{corollary}\label{cor:cycle_graph_explicit}
The number of minimal forts for $C_{n}$ satisfies
\[
\abs{\mathcal{F}_{C_{n}}} = \nint{\psi^{n}},~n\geq 10.
\]
Moreover, the limit of successive ratios satisfies
\[
\lim_{n\rightarrow\infty}\frac{\abs{\mathcal{F}_{C_{n+1}}}}{\abs{\mathcal{F}_{C_{n}}}}
=
\psi,
\]
where $\psi=1.3247179572\ldots$ denotes the plastic ratio.
\end{corollary}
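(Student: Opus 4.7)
The plan is to derive the closed-form expression for the Perrin sequence, bound the contribution of the complex roots, and then obtain the rounding identity for sufficiently large $n$ together with the ratio limit.

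First, by Corollary~\ref{cor:cycle_graph} we have $\abs{\mathcal{F}_{C_{n}}}=x_{n}$, where $x_{n}$ is the Perrin sequence satisfying $x_{n}=x_{n-2}+x_{n-3}$ with seeds $x_{0}=3$, $x_{1}=0$, $x_{2}=2$. The characteristic polynomial of this linear recurrence is $\lambda^{3}-\lambda-1$, whose roots are $\psi,\omega,\overline{\omega}$ as described in the preceding discussion. Standard linear recurrence theory yields $x_{n}=A\psi^{n}+B\omega^{n}+C\overline{\omega}^{n}$ for constants $A,B,C$ determined by the initial seeds. The specific choice $A=B=C=1$ claimed in the excerpt can be verified by substituting $n=0,1,2$ and using the Vieta relations $\psi+\omega+\overline{\omega}=0$, $\psi\omega+\psi\overline{\omega}+\omega\overline{\omega}=-1$, and $\psi\omega\overline{\omega}=1$, which immediately give $\psi^{0}+\omega^{0}+\overline{\omega}^{0}=3$, $\psi+\omega+\overline{\omega}=0$, and $\psi^{2}+\omega^{2}+\overline{\omega}^{2}=(\psi+\omega+\overline{\omega})^{2}-2(\psi\omega+\psi\overline{\omega}+\omega\overline{\omega})=2$. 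Thus $x_{n}=\psi^{n}+\omega^{n}+\overline{\omega}^{n}$ as stated.

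Next, writing $\omega=re^{\iu\theta}$ with $r=0.8688\ldots<1$, the error term is
\[
\abs{\omega^{n}+\overline{\omega}^{n}} = \abs{2r^{n}\cos(n\theta)} \leq 2r^{n}.
\]
To obtain the rounding identity $x_{n}=\nint{\psi^{n}}$ it suffices to have $2r^{n}<\tfrac{1}{2}$, equivalently $r^{n}<\tfrac{1}{4}$. A direct computation (estimating $r^{10}\approx 0.245$) shows $2r^{10}<\tfrac{1}{2}$, and since $r<1$ the bound persists for all $n\geq 10$. Hence for every such $n$ the real number $\psi^{n}$ lies within distance less than $\tfrac{1}{2}$ of the integer $x_{n}$, so $x_{n}=\nint{\psi^{n}}$.

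Finally, the ratio statement follows immediately from the closed form: dividing numerator and denominator by $\psi^{n}$ gives
\[
\frac{\abs{\mathcal{F}_{C_{n+1}}}}{\abs{\mathcal{F}_{C_{n}}}}
= \frac{\psi + \omega(\omega/\psi)^{n}+\overline{\omega}(\overline{\omega}/\psi)^{n}}{1+(\omega/\psi)^{n}+(\overline{\omega}/\psi)^{n}},
\]
and since $\abs{\omega/\psi}=\abs{\overline{\omega}/\psi}=r/\psi<1$, both the numerator and the denominator tend to $\psi$ and $1$, respectively, as $n\to\infty$.

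The only mildly delicate step is pinning down the threshold $n\geq 10$: one must verify (by direct tabulation using the recurrence) that the rounding identity also holds for small $n$ or, if it fails somewhere below $10$, that the stated threshold is nonetheless sharp. The rest of the argument is a routine application of the theory of constant-coefficient linear recurrences together with the fact that the two non-dominant roots lie strictly inside the unit circle.
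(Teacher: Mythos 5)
Your proof is correct and follows essentially the same route the paper takes: it relies on the closed form $x_{n}=\psi^{n}+\omega^{n}+\overline{\omega}^{n}$ for the Perrin sequence (which the paper simply states in the discussion preceding the corollary) together with the fact that the non-dominant roots lie inside the unit circle. You usefully supply the quantitative details the paper leaves implicit, namely the verification of the closed form via the Vieta relations and the bound $2r^{10}<\tfrac{1}{2}$ (using $r^{2}=1/\psi$, one checks $r^{10}\approx 0.2451<\tfrac14$), which justifies the threshold $n\geq 10$.
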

\begin{corollary}\label{cor:path_graph_explicit}
The number of minimal forts for $P_{n}$ satisfies
\[
\abs{\mathcal{F}_{P_{n}}} = \nint{\frac{\psi^{n+4}}{2\psi+3}},~n\geq 1.
\]
Moreover, the limit of successive ratios satisfies
\[
\lim_{n\rightarrow\infty}\frac{\abs{\mathcal{F}_{P_{n+1}}}}{\abs{\mathcal{F}_{P_{n}}}}
=
\psi,
\]
where $\psi=1.3247179572\ldots$ denotes the plastic ratio.
\end{corollary}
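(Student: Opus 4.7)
The plan is to solve the Padovan recurrence established in Corollary~\ref{cor:path_graph} in closed form and then control the deviation of the closed form from its dominant real term. Writing $a_n := \abs{\mathcal{F}_{P_n}}$, the recurrence $a_n = a_{n-2} + a_{n-3}$ for $n \geq 4$ has characteristic polynomial $\lambda^3 - \lambda - 1$, whose roots are the plastic ratio $\psi$ and the complex conjugate pair $\omega, \overline{\omega}$ already introduced in the discussion preceding the corollary. Since these three roots are distinct, there exist constants $A \in \mathbb{R}$ and $B \in \mathbb{C}$, determined uniquely by the initial data $a_1 = a_2 = a_3 = 1$, such that
\[
a_n = A\psi^n + B\omega^n + \overline{B}\,\overline{\omega}^n \qquad \text{for all } n \geq 1.
\]

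First I would solve the $3\times 3$ linear system arising from the initial values to identify $A = \psi^4/(2\psi+3)$, so that $A\psi^n = \psi^{n+4}/(2\psi+3)$; this matches the coefficient $\alpha = \psi^5/(2\psi+3)$ recorded in the paragraphs preceding the corollary, after accounting for the index shift $\abs{\mathcal{F}_{P_n}} = y_{n-1}$. The same solve yields an explicit value of $B$. Next, using $\abs{\omega} = \abs{\overline{\omega}} = r < 1$, the oscillating tail satisfies
\[
\abs{B\omega^n + \overline{B}\,\overline{\omega}^n} \leq 2\abs{B} r^n,
\]
and I would show this upper bound is strictly less than $1/2$ for every $n \geq 1$: the large-$n$ range is automatic from the geometric decay, and the small-$n$ range is handled by a direct numerical check using the explicit $B$. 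Once this inequality is established, rearranging the closed form gives
\[
\abs{a_n - \frac{\psi^{n+4}}{2\psi+3}} < \frac{1}{2},
\]
which is exactly the statement $a_n = \nint{\psi^{n+4}/(2\psi+3)}$ claimed in the corollary.

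For the limit of successive ratios, dividing the closed form through by $\psi^n$ yields $a_n/\psi^n = A + O((r/\psi)^n)$, and since $r/\psi < 1$ with $A \neq 0$, it follows that $a_{n+1}/a_n \to \psi \cdot A/A = \psi$. The main obstacle is the boundary case of the error bound $2\abs{B}r^n < 1/2$ at small $n$; for large $n$ the decay is overwhelming, but down at $n = 1$ one must either compute $\abs{B}$ explicitly from the linear system or cross-check against the small values $a_1 = a_2 = a_3 = 1$, $a_4 = 2$, $a_5 = 2$, and so on. Everything else is standard linear-recurrence bookkeeping and relies directly on facts already recorded in the text preceding the corollary.
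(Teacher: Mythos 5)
Your proposal is correct and follows essentially the same route as the paper, which states this corollary as a summary of the standard closed-form analysis of the Padovan sequence given in the preceding paragraphs (the explicit formula $y_{n}=\alpha\psi^{n}+\beta\omega^{n}+\overline{\beta}\overline{\omega}^{n}$ with $\alpha=\psi^{5}/(2\psi+3)$, the decay of the conjugate-pair tail below $1/2$, and the index shift $\abs{\mathcal{F}_{P_{n}}}=y_{n-1}$). Your explicit verification that $2\abs{B}r^{n}<1/2$ for all $n\geq 1$ supplies a detail the paper leaves implicit, but it is the same argument.
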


It is worth noting that every minimal fort of $P_{n}$ contains both pendant vertices; hence, the path graph is an example of a family of graphs for which the bounds in Theorem~\ref{thm:zf_cart_bound} are not very good. 

We conclude this section by considering the family of spider graphs, which consists of trees with exactly one junction vertex. 
Recall that a \emph{tree} is a connected acyclic graph.
Moreover, a tree has the following equivalent characterizations. 
\begin{theorem}[Theorem 1.5.1 of~\cite{Diestel2016}]\label{thm:tree_char}
The following assertions are equivalent for a graph $G\in\mathbb{G}$.
\begin{enumerate}[(i.)]
\item $G=(V,E)$ is a tree.
\item For any $u,v\in V$, there is a unique $(u,v)$-path.
\item Every edge $\{u,v\}\in E$ is a cut edge.
\end{enumerate}
\end{theorem}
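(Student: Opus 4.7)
The plan is to prove the three assertions equivalent via a cyclic chain of implications $(i)\Rightarrow(ii)\Rightarrow(iii)\Rightarrow(i)$. The central tool throughout is the familiar exchange between paths and cycles: two distinct paths sharing endpoints can be spliced into a cycle, and conversely any edge lying on a cycle admits a detour through the remainder of that cycle.

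For $(i)\Rightarrow(ii)$, I would use connectedness to supply at least one $(u,v)$-path for each pair $u,v\in V$, and then argue uniqueness by contradiction. Given two distinct $(u,v)$-paths, I would identify the first vertex $x$ at which they diverge after a common initial segment and the next vertex $y$ at which they next coincide; the internally disjoint sub-paths from $x$ to $y$ then concatenate into a cycle, contradicting acyclicity. For $(ii)\Rightarrow(iii)$, I would fix an edge $\{u,v\}\in E$; since $\{u,v\}$ is itself a $(u,v)$-path and is the unique one by (ii), removing this edge eliminates every route from $u$ to $v$ and splits their common component of $G$ into two components of $G-\{u,v\}$, so $\{u,v\}$ is a cut edge.

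For $(iii)\Rightarrow(i)$, acyclicity is immediate: any edge on a cycle $v_0,v_1,\ldots,v_k=v_0$ can be removed while its endpoints remain linked along the rest of the cycle, contradicting the cut-edge hypothesis. The main obstacle is connectedness, since the literal reading of (iii) also permits, for example, a disjoint union of two edges in which each edge is a cut edge. Consistent with the paper's definition of a tree as a \emph{connected} acyclic graph, and with Diestel's original phrasing of (iii) as ``minimally connected,'' I would interpret (iii) as implicitly carrying connectedness; under that reading, acyclicity together with connectedness yields (i), completing the cyclic chain.
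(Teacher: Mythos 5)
The paper offers no proof of this statement; it is quoted verbatim as Theorem 1.5.1 of Diestel's \emph{Graph Theory}, so there is nothing internal to compare against. Your argument is the standard textbook proof of the equivalence and it is correct: existence of paths from connectedness, uniqueness via the splice-two-distinct-paths-into-a-cycle argument (taking the last vertex $x$ of the common initial segment and the first vertex $y$ at which the first path returns to the second gives two internally disjoint, distinct $x$--$y$ subpaths whose union is a cycle), the unique-path characterization forcing every edge to be a bridge, and the cycle-detour argument for acyclicity in the last implication.

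Your one substantive observation is also the right one to make: assertion (iii) as restated in the paper (``every edge is a cut edge'') is strictly weaker than Diestel's original ``$G$ is minimally connected,'' and the implication $(iii)\Rightarrow(i)$ is literally false without connectedness (a disjoint union of two edges, or even the edgeless graph on two vertices, satisfies (iii) vacuously or otherwise but is not a tree). Reading connectedness into (iii), as you do, is the only way to make the cyclic chain close, and is clearly what the paper intends given that it defines a tree as a connected acyclic graph. With that reading your proof is complete.
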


The following theorem shows that any pair of vertices in a minimal fort of a tree induce a minimal fort on a path.
\begin{theorem}\label{thm:tree_induced_path}
Let $T$ be a tree of order $n\geq 2$ and let $F\subseteq V(T)$ be a minimal fort of $T$.
Also, let $u,v\in F$ and let $W=\{w_{0},w_{1},\ldots,w_{\ell}\}$ denote the vertices in the unique $(u,v)$-path of $T$.
Then, $T[W]$ is an induced path graph of order $(\ell+1)$.
Moreover, $F'=F\cap W$ is a minimal fort of $T[W]$. 
\end{theorem}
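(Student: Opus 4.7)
My plan is to split the claim into three parts: (i) $T[W]$ is an induced path graph of order $\ell+1$, (ii) $F'=F\cap W$ is a fort of $T[W]$, and (iii) $F'$ is minimal. Part (i) is immediate from Theorem~\ref{thm:tree_char}: the $(u,v)$-path contributes the edges $\{w_{i},w_{i+1}\}$, and any additional edge $\{w_{i},w_{j}\}$ of $T$ with $|i-j|\geq 2$ would, together with the subpath from $w_{i}$ to $w_{j}$, form a cycle in $T$.

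For (ii), I argue by contradiction. Suppose $F'$ is not a fort of $T[W]$: some interior $w_{i}\notin F$ has exactly one path-neighbor in $F$, which, together with $u,v\in F$, yields two consecutive interior path-vertices $w_{i},w_{i+1}\notin F$. Since $\{w_{i},w_{i+1}\}$ is a bridge of $T$ (Theorem~\ref{thm:tree_char}), removing it splits $T$ into subtrees $T^{L}\ni u$ and $T^{R}\ni v$. A boundary check at $w_{i}$ and $w_{i+1}$---whose cross-cut neighbors are not in $F$ anyway---shows that $F\cap V(T^{L})$ is still a fort of $T$, and since $v\notin V(T^{L})$, it is a proper sub-fort of $F$, contradicting minimality.

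For (iii), I first observe that every fort of $T[W]$ must contain both pendant vertices $u$ and $v$: excluding $u$ triggers the pendant fort condition at $u$, which forces $w_{1}\notin F''$, and iterating the interior fort condition along $W$ empties the fort. Now suppose for contradiction that $F''\subsetneq F'$ is a fort of $T[W]$, and pick any $w^{*}=w_{k}\in F'\setminus F''$ (necessarily interior). The fort condition at $w^{*}$ forces $|\{w_{k-1},w_{k+1}\}\cap F''|\in\{0,2\}$; the value $0$ is ruled out because propagating the fort condition leftward along $W$ would eventually force $u\notin F''$. Hence $w_{k-1},w_{k+1}\in F''\subseteq F$, giving three consecutive $F$-vertices $w_{k-1},w_{k},w_{k+1}$ on $W$. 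I then delete both path-edges incident to $w^{*}$, splitting $T$ into components $T_{L}\ni u$, $T_{M}\ni w^{*}$ (absorbing every off-path branch at $w^{*}$), and $T_{R}\ni v$. I claim $F^{*}=(F\cap V(T_{L}))\cup(F\cap V(T_{R}))$ is a fort of $T$: at $w^{*}$ both path-neighbors lie in $F^{*}$; at every other vertex of $T_{M}$ all tree-neighbors remain inside $T_{M}$, hence outside $F^{*}$; and vertices of $V(T_{L})\cup V(T_{R})$ have their $F$-neighbor counts unchanged. Since $w^{*}\in F\setminus F^{*}$, this is a proper sub-fort of $F$, contradicting minimality.

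The main obstacle is the correct setup of the edge cut in each of (ii) and (iii) so that the resulting restriction of $F$ is still a fort of $T$: in (ii) the cut succeeds only once two consecutive non-$F$ path-vertices are exhibited, and in (iii) only once three consecutive $F$ path-vertices are exhibited, so most of the work consists in using fort propagation along $W$ to reach these configurations.
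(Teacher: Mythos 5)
Your proof is correct, and for the ``fort'' part it takes the same route as the paper: if $F\cap W$ fails the fort condition at some interior path vertex, one finds two consecutive path vertices outside $F$, cuts the tree at the bridge between them, and observes that the portion of $F$ on the side containing $u$ is a non-empty proper sub-fort of $F$ in $T$, contradicting minimality. Where you genuinely depart from the paper is in the \emph{minimality} of $F'=F\cap W$. The paper's printed proof reduces the entire claim to showing that $S=W\setminus F$ avoids $u,v$ and contains no two adjacent vertices; that establishes that $S$ is a failed zero forcing set of the path (equivalently, that $F'$ is a fort of $T[W]$), but not that $S$ is a \emph{maximal} one---an independent set of interior vertices need not be maximal (e.g.\ $\{w_3\}$ in $P_7$), so an additional argument is required. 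Your part (iii) supplies exactly this: a non-minimal $F'$ forces some $w_k\in F'$ with both path-neighbors also in $F$ (the ``zero neighbors'' alternative propagates along $W$ to expel the pendant $u$, which no fort of a path can omit), i.e.\ three consecutive vertices of $W$ lying in $F$; deleting both path-edges at $w_k$ then isolates a middle component whose removal from $F$ leaves a smaller fort of $T$, again contradicting minimality. I checked the boundary verifications in both of your cuts and they are sound (in each case the only cross-cut neighbors either lie outside $F$ or remain in the retained part of the fort). So your argument is not merely a valid alternative; it is more complete than the proof printed in the paper, and the double-cut step in part (iii) is precisely the piece the paper's proof leaves out.
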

\begin{proof}
Let $S=W\setminus{F}$.
We will show that $S$ is a maximal failed zero forcing set of $T[W]$.
Note that $u,v\notin S$; hence, all that remains is to show that no two adjacent vertices are in $S$.

For the sake of contradiction, suppose there is an $i\in\{1,\ldots,\ell-2\}$ such that $w_{i},w_{i+1}\in S$. 
Since $\{w_{i},w_{i+1}\}$ is a cut edge of $T$, it follows that $T-\{u_{i},u_{i+1}\}$ has two connected components induced by the disjoint vertex sets $W_{1}$ and $W_{2}$.
Without loss of generality, suppose that $u\in W_{1}$ and $v\in W_{2}$. 

Now, let $F_{1} = F\cap W_{1}$.
Since $u\in F\cap W_{1}$, it follows that $F_{1}$ is non-empty.
Furthermore, every vertex $w\in W_{1}\setminus F_{1}$ satisfies
\[
N_{T}(w)\cap F_{1} = N_{T}(w)\cap F,
\]
which implies that $F_{1}$ is a fort of $T$.

Similarly, one can show $F_{2} = F\cap W_{2}$ is a fort of $T$.
Since there is no vertex $w\in V(T)$ that is adjacent to both a vertex in $F_{1}$ and a vertex in $F_{2}$, it follows that $F$ is not a minimal fort. 
\end{proof}

In Figure~\ref{fig:forts_subpaths_Comb4}, we illustrate Theorem~\ref{thm:tree_induced_path} for a particular minimal fort of the $4$-\emph{comb graph}, a tree of order~$8$ obtained from the path graph~$P_4$ by joining a pendant vertex to each vertex in~$P_4$.
Note that each induced path (highlighted with dashed lines) itself induces a minimal fort on that path.

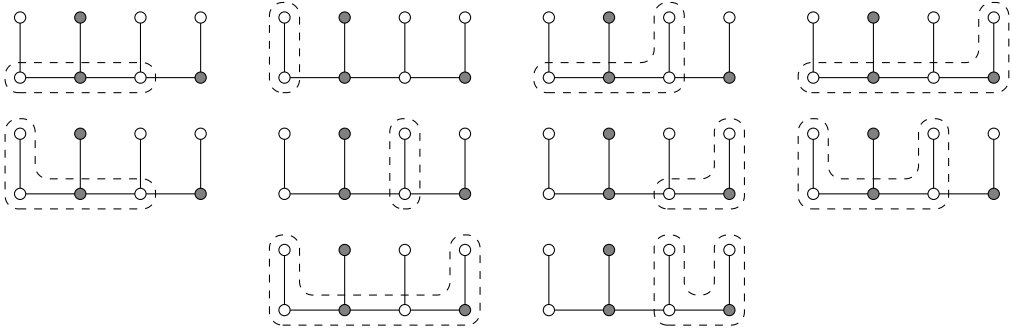
\begin{figure}[ht]
\centering
    \begin{tikzpicture}[every node/.style={circle,draw=black,inner sep=1.5}, scale=.8]
    \begin{scope}
    \draw[black] (0,1)--(0,0)--(3,0)--(3,1);
    \draw[black] (1,0)--(1,1);
    \draw[black] (2,0)--(2,1);
    \node[fill=white] at (0,0) {};
    \node[fill=gray] at (1,0) {};
    \node[fill=white] at (2,0) {};
    \node[fill=gray] at (3,0) {};
    \node[fill=white] at (0,1) {};
    \node[fill=gray] at (1,1) {};
    \node[fill=white] at (2,1) {};
    \node[fill=white] at (3,1) {};
    \draw[dashed] (-0.25,-0.25) [rounded corners=5pt] -- (2.25,-0.25)--(2.25,0.25)--(-.25,.25)--cycle;
    \end{scope}
    \begin{scope}[yshift=-55]
    \draw[black] (0,1)--(0,0)--(3,0)--(3,1);
    \draw[black] (1,0)--(1,1);
    \draw[black] (2,0)--(2,1);
    \node[fill=white] at (0,0) {};
    \node[fill=gray] at (1,0) {};
    \node[fill=white] at (2,0) {};
    \node[fill=gray] at (3,0) {};
    \node[fill=white] at (0,1) {};
    \node[fill=gray] at (1,1) {};
    \node[fill=white] at (2,1) {};
    \node[fill=white] at (3,1) {};
    \draw[dashed] (-0.25,-0.25) [rounded corners=5pt] -- (2.25,-0.25)--(2.25,.25)--(.25,.25)--(.25,1.25)--(-.25,1.25)--cycle;
    \end{scope}

    \begin{scope}[xshift=125]
    \draw[black] (0,1)--(0,0)--(3,0)--(3,1);
    \draw[black] (1,0)--(1,1);
    \draw[black] (2,0)--(2,1);
    \node[fill=white] at (0,0) {};
    \node[fill=gray] at (1,0) {};
    \node[fill=white] at (2,0) {};
    \node[fill=gray] at (3,0) {};
    \node[fill=white] at (0,1) {};
    \node[fill=gray] at (1,1) {};
    \node[fill=white] at (2,1) {};
    \node[fill=white] at (3,1) {};
    \draw[dashed] (-0.25,-0.25) [rounded corners=5pt] -- (.25,-0.25)--(.25,1.25)--(-.25,1.25)--cycle;
    \end{scope}
    \begin{scope}[xshift=125,yshift=-55]
    \draw[black] (0,1)--(0,0)--(3,0)--(3,1);
    \draw[black] (1,0)--(1,1);
    \draw[black] (2,0)--(2,1);
    \node[fill=white] at (0,0) {};
    \node[fill=gray] at (1,0) {};
    \node[fill=white] at (2,0) {};
    \node[fill=gray] at (3,0) {};
    \node[fill=white] at (0,1) {};
    \node[fill=gray] at (1,1) {};
    \node[fill=white] at (2,1) {};
    \node[fill=white] at (3,1) {};
    \draw[dashed] (1.75,-0.25) [rounded corners=5pt] -- (2.25,-0.25)--(2.25,1.25)--(1.75,1.25)--cycle;
    \end{scope}
    \begin{scope}[xshift=125,yshift=-110]
    \draw[black] (0,1)--(0,0)--(3,0)--(3,1);
    \draw[black] (1,0)--(1,1);
    \draw[black] (2,0)--(2,1);
    \node[fill=white] at (0,0) {};
    \node[fill=gray] at (1,0) {};
    \node[fill=white] at (2,0) {};
    \node[fill=gray] at (3,0) {};
    \node[fill=white] at (0,1) {};
    \node[fill=gray] at (1,1) {};
    \node[fill=white] at (2,1) {};
    \node[fill=white] at (3,1) {};
    \draw[dashed] (-0.25,-0.25) [rounded corners=5pt] -- (3.25,-0.25)--(3.25,1.25)--(2.75,1.25)--(2.75,.25)--(.25,.25)--(.25,1.25)--(-.25,1.25)--cycle;
    \end{scope}

    \begin{scope}[xshift=250]
    \draw[black] (0,1)--(0,0)--(3,0)--(3,1);
    \draw[black] (1,0)--(1,1);
    \draw[black] (2,0)--(2,1);
    \node[fill=white] at (0,0) {};
    \node[fill=gray] at (1,0) {};
    \node[fill=white] at (2,0) {};
    \node[fill=gray] at (3,0) {};
    \node[fill=white] at (0,1) {};
    \node[fill=gray] at (1,1) {};
    \node[fill=white] at (2,1) {};
    \node[fill=white] at (3,1) {};
    \draw[dashed] (-0.25,-0.25) [rounded corners=5pt] -- (2.25,-0.25)--(2.25,1.25)--(1.75,1.25)--(1.75,.25)--(-.25,.25)--cycle;
    \end{scope}
    \begin{scope}[xshift=250,yshift=-55]
    \draw[black] (0,1)--(0,0)--(3,0)--(3,1);
    \draw[black] (1,0)--(1,1);
    \draw[black] (2,0)--(2,1);
    \node[fill=white] at (0,0) {};
    \node[fill=gray] at (1,0) {};
    \node[fill=white] at (2,0) {};
    \node[fill=gray] at (3,0) {};
    \node[fill=white] at (0,1) {};
    \node[fill=gray] at (1,1) {};
    \node[fill=white] at (2,1) {};
    \node[fill=white] at (3,1) {};
    \draw[dashed] (1.75,-0.25) [rounded corners=5pt] -- (3.25,-0.25)--(3.25,1.25)--(2.75,1.25)--(2.75,.25)--(1.75,.25)--cycle;
    \end{scope}
    \begin{scope}[xshift=250,yshift=-110]
    \draw[black] (0,1)--(0,0)--(3,0)--(3,1);
    \draw[black] (1,0)--(1,1);
    \draw[black] (2,0)--(2,1);
    \node[fill=white] at (0,0) {};
    \node[fill=gray] at (1,0) {};
    \node[fill=white] at (2,0) {};
    \node[fill=gray] at (3,0) {};
    \node[fill=white] at (0,1) {};
    \node[fill=gray] at (1,1) {};
    \node[fill=white] at (2,1) {};
    \node[fill=white] at (3,1) {};
    \draw[dashed] (1.75,-0.25) [rounded corners=5pt] -- (3.25,-0.25)--(3.25,1.25)--(2.75,1.25)--(2.75,.25)--(2.25,.25)--(2.25,1.25)--(1.75,1.25)--cycle;
    \end{scope}

    \begin{scope}[xshift=375]
    \draw[black] (0,1)--(0,0)--(3,0)--(3,1);
    \draw[black] (1,0)--(1,1);
    \draw[black] (2,0)--(2,1);
    \node[fill=white] at (0,0) {};
    \node[fill=gray] at (1,0) {};
    \node[fill=white] at (2,0) {};
    \node[fill=gray] at (3,0) {};
    \node[fill=white] at (0,1) {};
    \node[fill=gray] at (1,1) {};
    \node[fill=white] at (2,1) {};
    \node[fill=white] at (3,1) {};
    \draw[dashed] (-0.25,-0.25) [rounded corners=5pt] -- (3.25,-0.25)--(3.25,1.25)--(2.75,1.25)--(2.75,.25)--(-.25,.25)--cycle;
    \end{scope}
    \begin{scope}[xshift=375,yshift=-55]
    \draw[black] (0,1)--(0,0)--(3,0)--(3,1);
    \draw[black] (1,0)--(1,1);
    \draw[black] (2,0)--(2,1);
    \node[fill=white] at (0,0) {};
    \node[fill=gray] at (1,0) {};
    \node[fill=white] at (2,0) {};
    \node[fill=gray] at (3,0) {};
    \node[fill=white] at (0,1) {};
    \node[fill=gray] at (1,1) {};
    \node[fill=white] at (2,1) {};
    \node[fill=white] at (3,1) {};
    \draw[dashed] (-0.25,-0.25) [rounded corners=5pt] -- (2.25,-0.25)--(2.25,1.25)--(1.75,1.25)--(1.75,.25)--(.25,.25)--(.25,1.25)--(-.25,1.25)--cycle;
    \end{scope}
    \end{tikzpicture}
    \caption{A minimal fort (white) and maximal failed zero forcing set (gray) of the $4$-comb graph, and its corresponding $\binom{5}{2}=10$ paths (indicated by dashed lines) induced by taking any pair of vertices in the minimal fort.}
    \label{fig:forts_subpaths_Comb4}
\end{figure}

Next, we derive an explicit formula for the number of minimal forts on a spider graph, which is a tree with exactly one junction vertex. 
In particular, we let $S_{l_{1},\ldots,l_{k}}$ denote a spider graph with junction vertex $v$, pendant vertices $u_{1},\ldots,u_{k}$, where $k\geq 2$ and for $i=1,\ldots,k$, the length of the path from $v$ to $u_{i}$ is $l_{i}\geq 1$. 
\begin{theorem}\label{thm:spider_graph}
Let $S_{l_{1},\ldots,l_{k}}$ denote a spider graph with junction vertex $v$, pendant vertices $u_{1},\ldots,u_{k}$, where $k\geq 2$ and for $i=1,\ldots,k$, the length of the path from $v$ to $u_{i}$ is $l_{i}\geq 1$. 
Then, the number of minimal forts satisfies
\[
\abs{\mathcal{F}_{S_{l_{1},\ldots,l_{k}}}} = \prod_{i=1}^{k}\abs{\mathcal{F}_{P_{l_{i}-1}}} + \sum_{i=1}^{k}\abs{\mathcal{F}_{P_{l_{i}-2}}}\prod_{j\neq i}\abs{\mathcal{F}_{P_{l_{j}-1}}}+\sum_{1\leq i<j\leq k}\abs{\mathcal{F}_{P_{l_{i}}}}\abs{\mathcal{F}_{P_{l_{j}}}},
\]
where $\abs{\mathcal{F}_{P_{-1}}}=1$ and $\abs{\mathcal{F}_{P_{0}}}=0$.
\end{theorem}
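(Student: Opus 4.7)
The plan is to classify the minimal forts $F$ of $S_{l_1,\ldots,l_k}$ by whether the junction vertex $v$ belongs to $F$, and then to sub-classify by $A=\{i:v_{i,1}\in F\}$, where $v_{i,1}$ denotes the neighbor of $v$ on leg $i$. Write $F_i=F\cap\{v_{i,1},\ldots,v_{i,l_i}\}$. Two preliminary facts underpin the entire argument: first, if any vertex of leg $i$ belongs to $F$, then $u_i\in F$ (otherwise the vertex immediately beyond the farthest fort vertex on leg $i$ has exactly one neighbor in $F$); second, every fort of a path contains both of its pendants (a standard propagation from a missing pendant forces the fort to be empty).

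I would first treat the case $v\notin F$. Let $I=\{i:F_i\neq\emptyset\}$. A propagation along a single leg rules out $|I|\leq 1$, so $|I|\geq 2$. Since $v\notin F$, the spider's fort condition on leg $i$ coincides with that of $P_{l_i}$, so each $F_i$ is a fort of $P_{l_i}$ and hence contains $v_{i,1}$. If $|I|\geq 3$, then picking $i,j\in I$ one checks that $F_i\cup F_j$ is itself a fort of the spider strictly contained in $F$, contradicting minimality. Thus $|I|=2$, and an analogous argument (replacing one $F_i$ by a strictly smaller fort of $P_{l_i}$) upgrades each $F_i$ to a \emph{minimal} fort of $P_{l_i}$. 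The converse check (any such union is a minimal fort of the spider) follows the same pattern and yields the third summand.

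For the case $v\in F$, one first observes that every leg must have $F_i\neq\emptyset$ (otherwise $v_{i,1}$ has exactly one neighbor in $F$, namely $v$). The two pivotal structural claims are $A\subsetneq\{1,\ldots,k\}$ and $|A|\leq 1$. The first is immediate: if every $v_{i,1}\in F$, then $F-v$ is again a fort, contradicting minimality. The second is the main technical obstacle; one shows that if $i\in A$ with $l_i\geq 2$, then minimality forces $v_{i,2}\notin F$ (else $F-v_{i,1}$ is still a fort of the spider), after which the fort condition propagates a pattern along leg $i$ that makes $F_i$ a minimal fort of $P_{l_i}$. Consequently, if $i,j\in A$, then $F_i\cup F_j$ is a fort of the spider strictly contained in $F$, again contradicting minimality. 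This step is delicate because it relies on sub-forts obtained by removing more than one vertex from $F$, not merely single-vertex removals.

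The count in the case $v\in F$ rests on a bijection proved directly by restriction: minimal forts of the extended path $P_{l_i+1}$ (leg $i$ together with $v$, viewed as a path with $v$ a pendant) split into two classes; those omitting $v_{i,1}$ correspond bijectively to the minimal forts of $P_{l_i-1}$ on $\{v_{i,2},\ldots,v_{i,l_i}\}$, and those including $v_{i,1}$ correspond to the minimal forts of $P_{l_i-2}$ on $\{v_{i,3},\ldots,v_{i,l_i}\}$ (the inverse maps prepend $\{v\}$ or $\{v,v_{i,1}\}$, exploiting the forced $1,0,1,\ldots$ pattern at the $v$-end). Combining, the sub-case $|A|=0$ contributes $\prod_i|\mathcal{F}_{P_{l_i-1}}|$ (first summand), and the sub-case $A=\{i_0\}$ contributes $|\mathcal{F}_{P_{l_{i_0}-2}}|\prod_{j\neq i_0}|\mathcal{F}_{P_{l_j-1}}|$, which upon summing over $i_0$ gives the second summand. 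The boundary conventions $|\mathcal{F}_{P_{0}}|=0$ and $|\mathcal{F}_{P_{-1}}|=1$ emerge naturally from the small-$l_i$ cases. To finish, I would verify the converse in each sub-case by a case analysis on whether a putative proper sub-fort of $F$ contains $v$, showing in each instance that some $v_{i,1}$ or $v_{i,2}$ would have exactly one neighbor in the sub-fort.
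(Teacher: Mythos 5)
Your proposal is correct and follows essentially the same route as the paper: the same case split on whether $v\in F$ and on $\abs{N(v)\cap F}\in\{0,1\}$ versus $\abs{N(v)\cap F}=2$, with each leg reduced to a minimal fort of a path of order $l_i$, $l_i-1$, or $l_i-2$, yielding the three summands. The only difference is one of packaging: the paper obtains the leg-by-leg structure by invoking Theorem~\ref{thm:tree_induced_path} (pairs of vertices in a minimal fort of a tree induce minimal forts on the connecting paths), whereas you re-derive the same facts by direct propagation and explicit sub-fort constructions, and you additionally make explicit the converse verification and the claim $\abs{N(v)\cap F}\leq 1$ that the paper leaves implicit.
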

\begin{proof}
Let $w_{1},\ldots,w_{k}$ denote the vertices adjacent to $v$ such that, for $i=1,\ldots,k$, $w_{i}$ is in the $(v,u_{i})$ path.
Also, let $F$ denote a minimal fort of $S_{l_{1},\ldots,l_{k}}$.

If $v\notin F$, then $N(v)\cap F=\{w_{i},w_{j}\}$, for some $i\neq j$.
Furthermore, $u_{i},u_{j}\in F$, and Theorem~\ref{thm:tree_induced_path} implies that the $(w_{i},u_{i})$- and $(w_{j},u_{j})$-paths induce a minimal fort on a path of order $l_{i}$ and $l_{j}$, respectively.
Therefore, there are
\[
\sum_{1\leq i<j\leq k}\abs{\mathcal{F}_{P_{l_{i}}}}\abs{\mathcal{F}_{P_{l_{j}}}} 
\]
minimal forts that do not include $v$.

If $v\in F$, then $u_{1},\ldots,u_{k}\in F$ and $\abs{N(v)\cap F}\leq 1$.
Suppose that $\abs{N(v)\cap F}=0$.
Then, for $i=1,\ldots,k$, $w_{i}\notin F$.
Therefore, $l_{i}\geq 2$ since $w_{i}$ is adjacent to exactly one vertex in $F$ other than $v$, which we denote by $w'_{i}$.
Now, Theorem~\ref{thm:tree_induced_path} implies that the $(w'_{i},u_{i})$-path induces a minimal fort on a path of order $l_{i}-1$.
Therefore, there are
\[
\prod_{i=1}^{k}\abs{\mathcal{F}_{P_{l_{i}-1}}}
\]
minimal forts that include $v$ and none of the neighbors of $v$. 

Suppose that $\abs{N(v)\cap F}=1$.
Then, there is exactly one $i\in\{1,\ldots,k\}$ such that $w_{i}\in F$.
Note that $l_{i}\neq 2$.
Indeed, if $l_{i}=2$, then $N(w_{i})=\{v,u_{i}\}$.
Therefore, Theorem~\ref{thm:tree_induced_path} implies that the $(v,u_{i})$-path induces a minimal fort on a path of order $3$.
However, the only such minimal fort does not include $w_{i}$.
If $l_{i}=1$, then $w_{i}=u_{i}$ and there is exactly one minimal fort induced by the $(v,u_{i})$-path. 
If $l_{i}\geq 3$, then $w_{i}$ has exactly one neighbor not in $F$, which we denote by $w'_{i}$.
Let $w''_{i}$ denote the vertex in $N(w'_{i})\cap F\setminus\{w_{i}\}$. 
Then, Theorem~\ref{thm:tree_induced_path} implies that the $(w''_{i},u_{i})$-path induces a minimal fort on a path of order $l_{i}-2$.

Finally, note that for all $j\neq i$, we have $w_{j}\notin F$, which implies that $l_{j}\geq 2$.
In this case, $w_{j}$ is adjacent to exactly one vertex in $F$ other than $v$, which we denote by $w'_{j}$.
Again, Theorem~\ref{thm:tree_induced_path} implies that the $(w'_{j},u_{j})$-path induces a minimal fort on a path of order $l_{j}-1$.
Therefore, there are 
\[
\sum_{i=1}^{k}\abs{\mathcal{F}_{P_{l_{i}-2}}}\prod_{j\neq i}\abs{\mathcal{F}_{P_{l_{j}-1}}}
\]
minimal forts that include $v$ and exactly one neighbor of $v$.
\end{proof}

It is worth noting that Theorem~\ref{thm:spider_graph} implies that the star graph, that is, the spider graph with $(n-1)$ legs of length $1$, has $\binom{n-1}{2}$ minimal forts.
Obviously, this result is also implied by Proposition~\ref{prop:complete_bipartite}. 
The following corollary applies Theorem~\ref{thm:spider_graph} to spiders with $k$ legs of length $l$.
\begin{corollary}\label{cor:symmetric_spider}
Let $S_{l_{1},\ldots,l_{k}}$ denote a spider graph with junction vertex $v$, pendant vertices $u_{1},\ldots,u_{k}$, where $k\geq 2$ and for $i=1,\ldots,k$, the length of the path from $v$ to $u_{i}$ is $l_{i}=l\geq 1$. 
Then, the number of minimal forts satisfies
\[
\abs{\mathcal{F}_{S_{l_{1},\ldots,l_{k}}}} = \abs{\mathcal{F}_{P_{l-1}}}^{k} + k\abs{\mathcal{F}_{P_{l-2}}}\abs{\mathcal{F}_{P_{l-1}}}^{k-1} + \binom{k}{2}\abs{\mathcal{F}_{P_{l}}}^{2},
\]
where $\abs{\mathcal{F}_{P_{-1}}}=1$ and $\abs{\mathcal{F}_{P_{0}}}=0$.
\end{corollary}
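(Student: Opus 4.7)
The plan is to obtain Corollary~\ref{cor:symmetric_spider} as an immediate specialization of Theorem~\ref{thm:spider_graph} by setting $l_1 = l_2 = \cdots = l_k = l$ in the general formula and collecting like terms. There is essentially no new combinatorial content to establish; the work is purely one of substitution.

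First, I would rewrite the three summands of the formula
\[
\abs{\mathcal{F}_{S_{l_{1},\ldots,l_{k}}}} = \prod_{i=1}^{k}\abs{\mathcal{F}_{P_{l_{i}-1}}} + \sum_{i=1}^{k}\abs{\mathcal{F}_{P_{l_{i}-2}}}\prod_{j\neq i}\abs{\mathcal{F}_{P_{l_{j}-1}}}+\sum_{1\leq i<j\leq k}\abs{\mathcal{F}_{P_{l_{i}}}}\abs{\mathcal{F}_{P_{l_{j}}}}
\]
under the uniform assumption $l_i = l$. The first product collapses to $\abs{\mathcal{F}_{P_{l-1}}}^{k}$ since every factor is identical. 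For the second summand, every term of the outer sum equals $\abs{\mathcal{F}_{P_{l-2}}}\cdot\abs{\mathcal{F}_{P_{l-1}}}^{k-1}$, and there are $k$ such terms, giving $k\,\abs{\mathcal{F}_{P_{l-2}}}\abs{\mathcal{F}_{P_{l-1}}}^{k-1}$. For the third summand, each of the $\binom{k}{2}$ unordered pairs $\{i,j\}$ contributes the same product $\abs{\mathcal{F}_{P_{l}}}^{2}$, yielding $\binom{k}{2}\abs{\mathcal{F}_{P_{l}}}^{2}$.

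Assembling the three pieces produces exactly
\[
\abs{\mathcal{F}_{S_{l,\ldots,l}}} = \abs{\mathcal{F}_{P_{l-1}}}^{k} + k\,\abs{\mathcal{F}_{P_{l-2}}}\abs{\mathcal{F}_{P_{l-1}}}^{k-1} + \binom{k}{2}\abs{\mathcal{F}_{P_{l}}}^{2},
\]
which is the claimed identity. The conventions $\abs{\mathcal{F}_{P_{-1}}}=1$ and $\abs{\mathcal{F}_{P_{0}}}=0$ are inherited verbatim from Theorem~\ref{thm:spider_graph} and accommodate the edge cases $l=1$ (where $\abs{\mathcal{F}_{P_{l-2}}} = \abs{\mathcal{F}_{P_{-1}}} = 1$ and $\abs{\mathcal{F}_{P_{l-1}}} = \abs{\mathcal{F}_{P_{0}}} = 0$) and $l=2$ (where $\abs{\mathcal{F}_{P_{l-2}}} = \abs{\mathcal{F}_{P_{0}}} = 0$).

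Since the corollary is a direct specialization, there is no genuine obstacle: the only thing to verify carefully is that the three uniform-leg simplifications match the indices in Theorem~\ref{thm:spider_graph}, and that the boundary conventions on $\abs{\mathcal{F}_{P_{-1}}}$ and $\abs{\mathcal{F}_{P_{0}}}$ are preserved unchanged. Consequently, the proof can be stated in just a few lines, invoking Theorem~\ref{thm:spider_graph} and performing the substitution.
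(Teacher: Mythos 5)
Your proposal is correct and matches the paper's treatment: the paper presents this corollary as an immediate specialization of Theorem~\ref{thm:spider_graph} to the case $l_{1}=\cdots=l_{k}=l$, with no separate proof given. Your substitution and collection of like terms, together with the observation that the boundary conventions $\abs{\mathcal{F}_{P_{-1}}}=1$ and $\abs{\mathcal{F}_{P_{0}}}=0$ carry over unchanged, is exactly what is intended.
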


\begin{figure}[ht]
\centering
\begin{tikzpicture}[every node/.style={circle,draw=black,inner sep=1.5}, scale=1.1]
    \begin{scope}[xshift=0,yshift=0]
    \foreach \i in {0,1,...,4}
    {
    \draw (0,0)--({\i*360/8}:1); \draw ({\i*360/8}:1)--({\i*360/8}:1.5);
    \draw[loosely dotted, thick] ({\i*360/8}:1.6)--({\i*360/8}:2.5); \draw ({\i*360/8}:2.5)--({\i*360/8}:3);
    }
    \foreach \i in {1,2,3,4}
    {
    \node[fill=white,label={[label distance=.3]{(5-\i)*360/8}:{\scriptsize $u_{{\i}}$}}] at ({(5-\i)*360/8}:3) {};
    }
    \node[fill=white,label=below left:{\scriptsize $w_1$}] at ({4*360/8}:1) {};
    \node[fill=white,label=left:{\scriptsize $w_2$}] at ({3*360/8}:1) {};
    \node[fill=white,label=above right:{\scriptsize $w_3$}] at ({2*360/8}:1) {};
    \node[fill=white,label=right:{\scriptsize $w_4$}] at ({1*360/8}:1) {};
    \node[fill=white,label=below right:{\scriptsize $w_k$}] at (0:1) {};
    \node[fill=white,label=right:{\scriptsize $u_k$}] at (0:3) {};
    
    \node[fill=white, label=below:{\scriptsize $v$}] at (0,0) {};
    \draw[loosely dotted, thick] ({0.25*360/8}:2.5) -- ({0.75*360/8}:2.5);
    \end{scope}
    \begin{scope}[xshift=0,yshift=-32]
    \node[circle,draw=white] at (0,0) {$\downarrow$};
    \end{scope}
    \begin{scope}[xshift=-175,yshift=-50]
    \draw (0,0)--(.4,0);
    \draw[loosely dotted, thick] (.5,0)--(1.1,0);
    \draw (1.1,0)--(3.4,0);
    \draw[loosely dotted, thick] (3.5,0)--(4.1,0);
    \draw (4.1,0)--(5.65,0);
    \draw[loosely dotted, thick] (5.75,0)--(6.35,0);
    \draw (6.35,0)--(7.9,0);
    \draw[loosely dotted, thick] (8,0)--(8.6,0);
    \draw (8.6,0)--(9.4,0);
    \draw[loosely dotted, ultra thick] (9.55,0)--(10.6,0);
    \draw (10.85,0)--(11.65,0);
    \draw[loosely dotted, thick] (11.75,0)--(12.35,0);
    \draw (12.35,0)--(12.75,0);

    \node[fill=white,label=below:{\scriptsize $u_1$}] at (0,0) {};
    \node[fill=white,label=below:{\scriptsize $w_1$}] at (1.5,0) {};
    \node[fill=white,label=below:{\scriptsize $v$}] at (2.25,0) {};
    \node[fill=white,label=below:{\scriptsize $w_2$}] at (3,0) {};
    \node[fill=white,label=below:{\scriptsize $u_2$}] at (4.5,0) {};
    \node[fill=white,label=below:{\scriptsize $w_3$}] at (5.25,0) {};
    \node[fill=white,label=below:{\scriptsize $u_3$}] at (6.75,0) {};
    \node[fill=white,label=below:{\scriptsize $w_4$}] at (7.5,0) {};
    \node[fill=white,label=below:{\scriptsize $u_4$}] at (9,0) {};
    \node[fill=white,label=below:{\scriptsize $w_k$}] at (11.25,0) {};
    \node[fill=white,label=below:{\scriptsize $u_k$}] at (12.75,0) {};
    \end{scope}
\end{tikzpicture}
\caption{Deconstruction of the spider $S_{l_{1},\ldots,l_{k}}$ into a path of order $n=\sum_{i=1}^{k}l_{i}+1$.}
\label{fig:spider_path}
\end{figure}
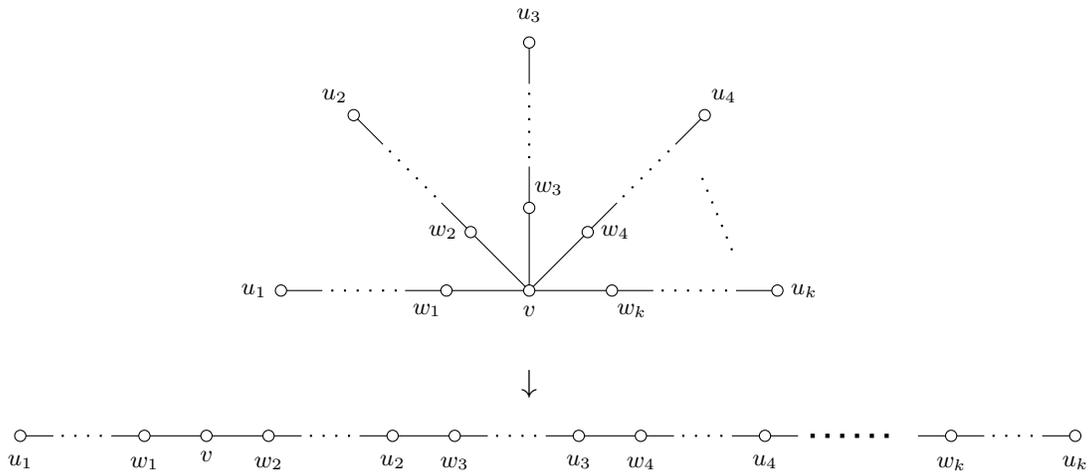

The following theorem provides a bound on the number of minimal forts of a spider graph based on the number of pendant vertices and the number of minimal forts of a path graph of the same order. 
While the bound is clearly not sharp in general, it does imply that the asymptotic growth rate of all spiders is bounded above by the asymptotic growth rate of the path graph, that is, the plastic ratio. 
Moreover, we suspect that the asymptotic growth rate of the path graph is extremal over all trees, see Conjecture~\ref{con:tree_gr}.
\begin{theorem}\label{thm:spider_graph_bound}
Let $S_{l_{1},\ldots,l_{k}}$ denote a spider graph with junction vertex $v$, pendant vertices $u_{1},\ldots,u_{k}$, where $k\geq 2$ and for $i=1,\ldots,k$, the length of the path from $v$ to $u_{i}$ is $l_{i}\geq 1$. 
Then, the number of minimal forts satisfies
\[
\abs{\mathcal{F}_{S_{l_{1},\ldots,l_{k}}}} \leq \binom{k}{2}\abs{\mathcal{F}_{P_{n}}},
\]
where $n=\sum_{i=1}^{k}l_{i}+1$ is the order of $S_{l_{1},\ldots,l_{k}}$. 
\end{theorem}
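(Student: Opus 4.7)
The plan is to bound $|\mathcal{F}_S|$ by counting minimal forts through the pairs of pendant vertices they contain. The first step is to show, via the case analysis in the proof of Theorem~\ref{thm:spider_graph}, that every minimal fort $F$ of $S$ contains at least two of the pendants $u_1,\ldots,u_k$. Indeed, in Case~1 ($v\notin F$) exactly two pendants $u_i,u_j$ lie in $F$, while in Cases~2 and~3 ($v\in F$) all $k$ pendants lie in $F$. Hence
\[
|\mathcal{F}_S| \;\leq\; \sum_{1\leq i<j\leq k}\bigl|\{F\in\mathcal{F}_S : u_i,u_j\in F\}\bigr|.
\]

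Next, for each pair $\{i,j\}$ with $i<j$, I would construct an injection
\[
\Phi_{i,j}\colon \{F\in\mathcal{F}_S : u_i,u_j\in F\}\longrightarrow \mathcal{F}_{P_n},
\]
where $P_n$ is the linearization $P^{\{i,j\}}$ of the spider suggested by Figure~\ref{fig:spider_path}: list leg $i$ from $u_i$ back to $w_i$, then $v$, then leg $j$ from $w_j$ out to $u_j$, then append the remaining $k-2$ legs in a fixed order. The aim is to send $F$ (possibly with a specified modification outside legs $i$ and $j$) to a minimal fort of $P^{\{i,j\}}$ by invoking Theorem~\ref{thm:tree_induced_path}, which guarantees that $F\cap V(\text{the }(u_i,u_j)\text{-path})$ is already a minimal fort of the corresponding subpath. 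Chaining these injections with the above union bound yields $|\mathcal{F}_S|\leq \binom{k}{2}|\mathcal{F}_{P_n}|$.

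The main obstacle is verifying that $\Phi_{i,j}$ lands in $\mathcal{F}_{P^{\{i,j\}}}$: the spider $S$ and the path $P^{\{i,j\}}$ share a vertex set but differ in edges, since $v$ has extra neighbors $w_m$ for $m\neq i,j$ in $S$ that vanish in $P^{\{i,j\}}$, and $P^{\{i,j\}}$ carries artificial joint edges between consecutive legs. Consequently, the naive identity $F\mapsto F$ need not preserve the fort condition, and care must be taken to redefine $F$ on the appended legs (for instance, filling them with the unique minimal-fort pattern of the corresponding sub-path) so that the fort property holds. An alternative, purely algebraic route is to work from the explicit formula of Theorem~\ref{thm:spider_graph} and the Padovan convolution identity $P(a+b+1)=P(a)P(b)+P(a{-}1)P(b{-}1)+P(a{-}2)P(b{-}1)+P(a{-}1)P(b{-}2)$, which immediately yields $P(l_i)P(l_j)\leq P(l_i+l_j+1)\leq P(n)$ by monotonicity, so that $\sum_{i<j}P(l_i)P(l_j)\leq \binom{k}{2}P(n)$; the Case~2 and Case~3 contributions then have to be absorbed into the slack, which I expect to be routine by comparing each of those products against one of the smaller summands in the convolution identity.
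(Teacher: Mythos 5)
Your overall strategy---linearize the spider into a path of order $n$, invoke Theorem~\ref{thm:tree_induced_path}, and account for minimal forts by pairs of legs---is the same as the paper's, but both of your proposed completions have genuine gaps. In the combinatorial route, the repair you suggest for well-definedness (refilling the appended legs with a canonical minimal-fort pattern) destroys injectivity exactly where injectivity is needed: by the proof of Theorem~\ref{thm:spider_graph}, every minimal fort containing $v$ contains all $k$ pendants and carries an independent choice of minimal fort of $P_{l_m-1}$ on each leg $m$, so two such forts can agree on legs $i$ and $j$ while differing on a third leg, and your $\Phi_{i,j}$ would identify them. You also need the images of the $v\in F$ forts and the $v\notin F$ forts to avoid colliding inside the single target set $\mathcal{F}_{P_n}$. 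The paper resolves both issues by partitioning $\mathcal{F}_{P_n}$ into the minimal forts containing $v$ and those avoiding $v$: the spider forts with $v\notin F$ are supported on exactly two legs and are charged, pair by pair, to $\binom{k}{2}$ copies of the latter class, while the spider forts with $v\in F$ are injected as whole sets (after deleting at most one vertex $u_{i-1}$ to repair the artificial joint edges of the linearization) into a \emph{single} copy of the former class; the two disjoint classes are then recombined via $\binom{k}{2}\abs{\mathcal{F}'_{P_{n}}}+\abs{\hat{\mathcal{F}}_{P_{n}}}\leq\binom{k}{2}\abs{\mathcal{F}_{P_{n}}}$.

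The algebraic route has a parallel gap. Using $\abs{\mathcal{F}_{P_{l_i}}}\abs{\mathcal{F}_{P_{l_j}}}\leq\abs{\mathcal{F}_{P_{n}}}$ for every pair already spends the entire budget $\binom{k}{2}\abs{\mathcal{F}_{P_{n}}}$, so the first two sums in Theorem~\ref{thm:spider_graph} must be absorbed into the slack $\abs{\mathcal{F}_{P_{n}}}-\abs{\mathcal{F}_{P_{l_i}}}\abs{\mathcal{F}_{P_{l_j}}}$. Comparing those sums ``against one of the smaller summands in the convolution identity'' does not directly apply, because the convolution summands involve only two legs while the terms to be absorbed are $k$-fold products such as $\prod_{i=1}^{k}\abs{\mathcal{F}_{P_{l_i-1}}}$. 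You would first need a super-multiplicativity lemma of the form $\prod_{r=1}^{s}\abs{\mathcal{F}_{P_{a_r}}}\leq\abs{\mathcal{F}_{P_{a_1+\cdots+a_s+s-1}}}$ (obtained by iterating your two-term inequality) to fold the remaining $k-2$ legs into one factor before comparing, together with some care at the degenerate values $\abs{\mathcal{F}_{P_{-1}}}=1$ and $\abs{\mathcal{F}_{P_{0}}}=0$. This route is salvageable, but the absorption step is a missing ingredient rather than routine bookkeeping, so as written neither completion constitutes a proof.
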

\begin{proof}
Let $w_{1},\ldots,w_{k}$ denote the vertices adjacent to $v$ such that, for $i=1,\ldots,k$, $w_{i}$ is in the $(v,u_{i})$-path.
Also, let $P_{n}$ denote the path graph of order $n$ constructed from the spider graph as follows: For $i=3,\ldots,k$, delete the edge $\{w_{i},v\}$ and add the edge $\{w_{i},u_{i-1}\}$, see Figure~\ref{fig:spider_path}. 
Finally, let $F$ denote a minimal fort of the spider graph, which we denote by  $S = S_{l_{1},\ldots,l_{k}}$ for brevity. 

If $v\notin F$, then $N_{S}(v)\cap F=\{w_{i},w_{j}\}$ and $u_{i},u_{j}\in F$, for some $i\neq j$.
Let $P_{l_{i}+l_{j}+1}$ denote the $(u_{i},u_{j})$-path of $S$ and note that $v$ is a vertex of $P_{l_{i}+l_{j}+1}$. 
Also, let $\mathcal{F}'_{P_{l_{i}+l_{j}+1}}$ denote the minimal forts of $P_{l_{i}+l_{j}+1}$ that don't include $v$.
Then, Theorem~\ref{thm:tree_induced_path} implies that there are
\[
\sum_{1\leq i<j\leq k}\abs{\mathcal{F}'_{P_{l_{i}+l_{j}+1}}}
\]
minimal forts that don't include $v$. 
Since $l_{i}+l_{j}+1\leq n$, for any $i\neq j$, it follows that
\[
\sum_{1\leq i<j\leq k}\abs{\mathcal{F}'_{P_{l_{i}+l_{j}+1}}} \leq \sum_{1\leq i<j\leq k}\abs{\mathcal{F}'_{P_{n}}} = \binom{k}{2}\abs{\mathcal{F}'_{P_{n}}},
\]
where $\mathcal{F}'_{P_{n}}$ denote the minimal forts of $P_{n}$ that don't include $v$.

If $v\in F$, then $u_{1},\ldots,u_{k}\in F$ and $\abs{N_{S}(v)\cap F}\leq 1$.
Furthermore, $F$ is a fort on $P_{n}$. 
Indeed, the pendant vertices of $P_{n}$, $u_{1}$ and $u_{k}$, are in $F$, and there are no two vertices not in $F$ that are adjacent in $P_{n}$.
If $\abs{N_{S}(v)\cap F}=0$, then $F$ is a minimal fort on $P_{n}$ since every vertex in $F$ is adjacent to at most one other vertex from $F$ in $P_{n}$.
If $\abs{N_{S}(v)\cap F}=1$, then there is exactly one $i\in\{1,\ldots,k\}$ such that $w_{i}\in F$.
If $i\in\{1,2\}$, then $F$ is a minimal fort in $P_{n}$.
For $3\leq i\leq k$, if the vertex adjacent to $u_{i-1}$ is also in $F$, then $\hat{F} = F\setminus\{u_{i-1}\}$ is a minimal fort in $P_{n}$.

Therefore, every minimal fort of $S$ that includes $v$ corresponds to a minimal fort of $P_{n}$ that also includes $v$.
Moreover, no two minimal forts of $S$ correspond to the same minimal fort of $P_{n}$.
Hence, there are at most $\abs{\hat{\mathcal{F}}_{P_{n}}}$ minimal forts of $S$ that include $v$, where $\hat{\mathcal{F}}_{P_{n}}$ denotes the minimal forts of $P_{n}$ that include $v$. 
Hence,
\begin{align*}
\abs{\mathcal{F}_{S_{l_{1},\ldots,l_{k}}}} &\leq \binom{k}{2}\abs{\mathcal{F}'_{P_{n}}} + \abs{\hat{\mathcal{F}}_{P_{n}}} \\
&\leq \binom{k}{2}\abs{\mathcal{F}'_{P_{n}}} + \binom{k}{2}\abs{\hat{\mathcal{F}}_{P_{n}}} \\
&= \binom{k}{2}\abs{\mathcal{F}_{P_{n}}}.
\end{align*}
\end{proof}
\section{Graph Products}\label{sec:graph_prod}
In this section, we develop methods for constructing minimal forts of a graph product using minimal forts of the original graphs.
In the process, we provide constructions for additional families of graphs where the number of minimal forts grows exponentially in the order of the graph. 
Throughout, we let $G=(V,E)$ and $G'=(V',E')$ denote graphs in $\mathbb{G}$ such that $V\cap V'=\emptyset$.
\subsection{Join}\label{subsec:join}
The \emph{join} of $G$ and $G'$, denoted $G\vee G'$, is the graph with vertex set $V\left(G\vee G'\right) = V\, \cup\, V'$ and edge set
\[
E\left(G\vee G'\right) = E\cup E'\cup\left\{\{u,u'\}\colon u\in V,\, u'\in V'\right\}.
\]
\begin{proposition}\label{prop:fort_join}
Let $G=(V,E)$ and $G'=(V',E')$ denote graphs in $\mathbb{G}$ such that $V\cap V'=\emptyset$.
If $G$ is connected and $\abs{V}\geq 2$, then every minimal fort of $G$ is a minimal fort of $G\vee G'$.
\end{proposition}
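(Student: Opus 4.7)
The plan is to verify the two defining properties separately: first that $F$ remains a fort in the join, and second that no proper subset of $F$ can be a fort in the join.

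For the first part, I would fix a minimal fort $F$ of $G$ and check the fort condition for each $u \in V(G\vee G')\setminus F = (V\setminus F)\cup V'$. For $u \in V\setminus F$, the key observation is that the join only adds edges between $V$ and $V'$, so $N_{G\vee G'}(u)\cap F = N_G(u)\cap F$, and this has cardinality $\neq 1$ because $F$ is a fort of $G$. For $u\in V'$, every vertex of $V$ is adjacent to $u$ in the join, so $\abs{N_{G\vee G'}(u)\cap F} = \abs{F}$, and I need this to be different from $1$. This is where the hypotheses on $G$ come in: because $G$ is connected and $\abs{V}\geq 2$, $G$ has no isolated vertex, and a singleton $\{v\}$ can be a fort only if $v$ is isolated, so $\abs{F}\geq 2$.

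For the second part, suppose toward contradiction that $F'\subsetneq F$ is a fort of $G\vee G'$ (so in particular $F'\neq\emptyset$, since forts are nonempty by definition). Since $F'\subseteq F\subseteq V$, any vertex $u\in V\setminus F'$ satisfies $N_{G\vee G'}(u)\cap F' = N_G(u)\cap F'$, so the fort condition for $F'$ in $G\vee G'$ restricted to $V\setminus F'$ immediately gives the fort condition for $F'$ in $G$. Hence $F'$ is a fort of $G$ properly contained in $F$, contradicting the minimality of $F$.

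The argument is essentially direct bookkeeping, and the only real subtlety is the appeal to the hypothesis $\abs{V}\geq 2$ together with connectedness to rule out $\abs{F}=1$; without this, a singleton fort $F=\{v\}$ in $G$ would fail to be a fort of the join whenever $V'\neq\emptyset$, since each $u'\in V'$ would then see exactly one neighbor in $F$. No check is needed for vertices of $V'$ in the minimality step, because the candidate subset $F'$ still lies in $V$, so the join structure automatically ensures every $u'\in V'$ has $\abs{F'}\geq 2$ neighbors in $F'$ (assuming we already know $\abs{F'}\geq 2$, which would come from the same reasoning, though it is not needed since we only need $F'$ to be a fort of $G$ to reach the contradiction).
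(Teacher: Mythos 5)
Your proposal is correct and follows essentially the same route as the paper: both arguments hinge on observing that connectedness and $\abs{V}\geq 2$ force $\abs{F}\geq 2$ (so every vertex of $V'$ sees at least two neighbors in $F$), and that minimality transfers because $F$ and any candidate proper sub-fort lie entirely in $V$, where the join adds no new edges. Your write-up simply spells out the details that the paper's proof states more tersely.
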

\begin{proof}
Let $G=(V,E)$ be connected and $\abs{V}\geq 2$.
Also, let $F$ denote a minimal fort of $G$. 
Since $G$ is connected and $\abs{V}\geq 2$, it follows that $\abs{F}\geq 2$.
Therefore, every vertex in $G'$ is adjacent to at least two vertices in $F$; hence, $F$ is a fort of $G\vee G'$.
Since no vertex from $G'$ has been added to the fort $F$, the minimality of $F$ in $G$ implies the minimality of $F$ in $G\vee G'$. 
\end{proof}

Since the join operation is commutative, it follows from Proposition~\ref{prop:fort_join} that if $G'$ is connected and of order at least $2$, then every minimal fort of $G'$ is a minimal fort of $G\vee G'$.
The following corollary follows immediately from Proposition~\ref{prop:fort_join} and Corollary~\ref{cor:cycle_graph_explicit}. 
\begin{corollary}\label{cor:wheel}
The wheel graph $C_{n-1}\vee K_{1}$ has at least $\nint{\psi^{n-1}}$ minimal forts for $n\geq 11$.
\end{corollary}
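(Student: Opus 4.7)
The plan is to observe that this corollary follows almost immediately by chaining together Proposition~\ref{prop:fort_join} with Corollary~\ref{cor:cycle_graph_explicit}, so the ``proof'' is really just a verification that the hypotheses of both results are met by the wheel graph $C_{n-1} \vee K_1$.

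First, I would identify $G = C_{n-1}$ and $G' = K_1$, noting $V \cap V' = \emptyset$ as required. Since $n \geq 11$, the cycle $C_{n-1}$ is certainly connected and has order $n-1 \geq 10 \geq 2$, so the hypotheses of Proposition~\ref{prop:fort_join} are satisfied. Applying that proposition, every minimal fort of $C_{n-1}$ is also a minimal fort of $C_{n-1} \vee K_1$. In particular,
\[
\abs{\mathcal{F}_{C_{n-1} \vee K_1}} \geq \abs{\mathcal{F}_{C_{n-1}}}.
\]

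Second, I would invoke Corollary~\ref{cor:cycle_graph_explicit}, which gives $\abs{\mathcal{F}_{C_m}} = \nint{\psi^m}$ whenever $m \geq 10$. Setting $m = n-1$, the hypothesis $n \geq 11$ guarantees $m \geq 10$, so $\abs{\mathcal{F}_{C_{n-1}}} = \nint{\psi^{n-1}}$. Combining the two inequalities yields $\abs{\mathcal{F}_{C_{n-1} \vee K_1}} \geq \nint{\psi^{n-1}}$, as claimed.

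There is no real obstacle here; the substantive content is carried entirely by Proposition~\ref{prop:fort_join} and Corollary~\ref{cor:cycle_graph_explicit}. The only thing worth double-checking is the numerical threshold: the bound $n \geq 11$ is chosen precisely so that $n-1 \geq 10$, which is the range in which the explicit Perrin-sequence formula $\nint{\psi^m}$ is valid. For smaller $n$ one could still state a bound using $\abs{\mathcal{F}_{C_{n-1}}}$ directly via the Perrin recurrence, but the cleaner closed-form $\nint{\psi^{n-1}}$ requires the stated threshold.
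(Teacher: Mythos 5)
Your proof is correct and is exactly the argument the paper intends: it presents this corollary as following immediately from Proposition~\ref{prop:fort_join} (with $G=C_{n-1}$ connected of order $n-1\geq 10\geq 2$ and $G'=K_{1}$) combined with the closed form $\abs{\mathcal{F}_{C_{n-1}}}=\nint{\psi^{n-1}}$ from Corollary~\ref{cor:cycle_graph_explicit}, valid since $n-1\geq 10$. Your verification of the hypotheses and of the threshold $n\geq 11$ matches the paper's reasoning.
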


It is worth noting that there are minimal forts for the wheel graph not described by Proposition~\ref{prop:fort_join}; for example, see Figure~\ref{fig:c5_join_k1}.
Specifically, every maximal failed zero forcing set of the wheel graph $C_{n-1}\vee K_{1}$ either contains the vertex from $K_{1}$ and a maximal independent set from $C_{n-1}$, or only contains a maximal collection of vertices from $C_{n-1}$ that is not an independent set and does not induce a $P_{3}$ subgraph.
Beginning with $n=4$, the sequence corresponding to the number of minimal forts of $C_{n-1}\vee K_1$ that contain the vertex from $K_{1}$, equivalently, maximal failed zero forcing sets that do not contain the vertex from $K_{1}$, is
\[
3,4,5,3,14,12,21,25,44,55,78,112,158,\ldots.
\]
As of now, this sequence is not included in the OEIS~\cite{OEIS}, and we are unaware of an explicit formula for the $n$th term in the sequence. 
For SageMath code to generate this sequence, see~\cite{HanelySage}.

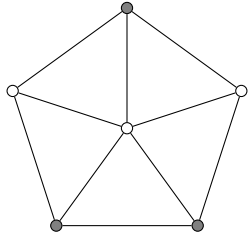
\begin{figure}[ht]
\centering
\begin{tikzpicture}[scale=.4,every node/.style={circle,draw=black,inner sep=1.5}]
    \node[fill=gray] (1) at (0,4) {};
    \node[fill=white] (2) at (-3.80,1.24) {};
    \node[fill=gray] (3) at (-2.35,-3.24) {};
    \node[fill=gray] (4) at (2.35,-3.24) {};
    \node[fill=white] (5) at (3.80,1.24) {};
    \node[fill=white] (6) at (0,0) {};

    \foreach \x/\y in {1/2,2/3,3/4,4/5,5/1,1/6,2/6,3/6,4/6,5/6}
        \draw[black] (\x) -- (\y);
\end{tikzpicture}
\caption{Minimal fort (white) and maximal failed zero forcing set (gray) of $C_{5}\vee K_{1}$ not described by Proposition~\ref{prop:fort_join}.}
\label{fig:c5_join_k1}
\end{figure}

\subsection{Corona Product}\label{subsec:corona}
The \emph{corona} of $G=(V,E)$ with $G'=(V',E')$, denoted $G\circ G'$, is the graph obtained from the disjoint union of $G$ and $\abs{V}$ copies of $G'$ by joining each vertex $u\in V$ with the $u$-copy of $G'$.
Throughout this section, we denote the vertices in the $u$-copy of $G'$ by $V'_{u}=\{v'_{u}\colon v'\in V'\}$.

If $G'$ is connected and of order at least $2$, then Proposition~\ref{prop:fort_join} implies that every minimal fort of $G'$ corresponds to a minimal fort of $G\circ G'$, where the vertices can be selected from any single copy of $G'$. 
We state and prove this observation in the following proposition. 
\begin{proposition}\label{prop:fort_corona_G'}
Let $G=(V,E)$ and $G'=(V',E')$ denote graphs in $\mathbb{G}$ such that $V\cap V'=\emptyset$.
Suppose that $\abs{V'}\geq 2$.
Also, let $F'$ denote a minimal fort of $G'$ and let $u\in V$.
Then, $\hat{F}=\{v'_{u}\colon v'\in F'\}\subseteq V'_{u}$ is a minimal fort of $G\circ G'$. 
\end{proposition}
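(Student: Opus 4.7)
The plan is to verify that $\hat{F}$ satisfies both the fort condition and minimality in $G\circ G'$, exploiting the fact that $V'_u$ is essentially insulated from the rest of $G\circ G'$: its only external neighbor is $u$. This suggests an alternative route via Proposition~\ref{prop:fort_join}, since the subgraph of $G\circ G'$ induced by $V'_u\cup\{u\}$ is isomorphic to $G'\vee K_1$, but a direct case analysis is clean enough that I would proceed with that.

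First I would check that $\hat{F}$ is a fort of $G\circ G'$ by verifying the fort condition for each vertex $x\notin\hat{F}$ in cases. Vertices in $V\setminus\{u\}$ and in $V'_w$ for $w\neq u$ have no neighbors in $V'_u$ by the corona construction, hence $|N(x)\cap\hat{F}|=0$. The vertex $u$ itself is adjacent to all of $V'_u$, so $|N(u)\cap\hat{F}|=|F'|$; since the discussion preceding the proposition emphasizes that $G'$ is connected with $|V'|\geq 2$, no singleton is a minimal fort of $G'$, guaranteeing $|F'|\geq 2$. For $x=v'_u$ with $v'\in V'\setminus F'$, the set $N_{G\circ G'}(v'_u)\cap\hat{F}$ is in natural bijection with $N_{G'}(v')\cap F'$ (since $u\notin\hat{F}$), whose cardinality is not $1$ because $F'$ is a fort of $G'$.

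Next, for minimality, I would argue by contradiction. Suppose $\hat{A}\subsetneq\hat{F}$ is a fort of $G\circ G'$, and set $A=\{v'\in V'\colon v'_u\in\hat{A}\}$, a nonempty proper subset of $F'$. For any $v'\in V'\setminus A$, the vertex $v'_u$ lies outside $\hat{A}$, and since $\hat{A}\subseteq V'_u$ excludes $u$, the fort condition yields $|N_{G'}(v')\cap A|=|N_{G\circ G'}(v'_u)\cap\hat{A}|\neq 1$. Thus $A$ would be a fort of $G'$ strictly contained in $F'$, contradicting the minimality of $F'$.

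The main subtlety lies in the case $x=u$ of the first step: without $|F'|\geq 2$ the verification collapses (e.g., a singleton fort of $E_2$ yields a $\hat{F}$ for which $u$ has exactly one neighbor in $\hat{F}$), so the connectedness of $G'$ implicit in the preceding discussion is essential. Every other part of the argument is a routine transfer between the fort structure of $G'$ and that of the $u$-copy sitting inside $G\circ G'$.
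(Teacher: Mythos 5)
Your proof is correct, and it takes a more elementary, self-contained route than the paper. The paper's own proof is two sentences: it observes that the subgraph of $G\circ G'$ induced by $V'_{u}\cup\{u\}$ is exactly $G'\vee K_{1}$, invokes Proposition~\ref{prop:fort_join} to conclude that $\hat{F}$ is a minimal fort of that induced subgraph, and then notes that no vertex of $G\circ G'$ outside $V'_{u}\cup\{u\}$ has a neighbor in $\hat{F}$, so both the fort condition and minimality transfer to the whole corona. You mention this route and then deliberately bypass it in favor of a direct case analysis; what that buys is transparency about exactly where each hypothesis is used, and in particular it surfaces a genuine defect in the statement: Proposition~\ref{prop:fort_join} requires connectedness of the graph supplying the fort, while the corona proposition as written assumes only $\abs{V'}\geq 2$. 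Your observation that the claim fails for $G'=E_{2}$ with $F'$ a singleton (so that $u$ has exactly one neighbor in $\hat{F}$) is exactly right; the paper's proof silently inherits the connectedness hypothesis from Proposition~\ref{prop:fort_join} and from the sentence preceding the proposition, but it never appears in the formal statement. Importing $\abs{F'}\geq 2$ explicitly, as you do, is the correct fix (connectedness of $G'$, or merely the absence of isolated vertices, suffices to guarantee it). The rest of your argument --- the identification of $N_{G\circ G'}(v'_{u})\cap\hat{F}$ with $N_{G'}(v')\cap F'$ for vertices of the $u$-copy, used in both directions for the fort check and the minimality check --- is a routine but complete transfer, so both approaches are sound.
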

\begin{proof}
By Proposition~\ref{prop:fort_join}, $\hat{F}$ is a minimal fort of the induced subgraph of $G\circ G'$ obtained from the vertices in $V'_{u}\cup\{u\}$. 
Since no vertex in $V(G\circ G')\setminus\left(V'_{u}\cup\{u\}\right)$ is adjacent to any vertex in $\hat{F}$, it follows that $\hat{F}$ is a minimal fort of $G\circ G'$.
\end{proof}

Under certain cases, we can use the minimal forts of $G$ to construct minimal forts of the corona product $G\circ G'$, as shown in the following proposition. 
\begin{proposition}\label{prop:fort_corona_Cn_Kr}
Let $C_{n}$ denote a cycle of order $n$ and $K_{r}$ denote a complete graph of order $r\geq 1$.
Let $C_{n}\circ K_{r}$ denote the corona of $C_{n}$ with $K_{r}$ formed by joining each vertex of $C_{n}$ with a copy of $K_{r}$.
Further, let $F$ denote a minimal fort of $C_{n}$ and for each $u\in F$ select a single vertex $v_{u}$ from the $u$-copy of $K_{r}$.
Then, $\hat{F}=F\,\cup\,\{v_{u}\colon u\in F\}$ is a minimal fort of $C_{n}\circ K_{r}$.
\end{proposition}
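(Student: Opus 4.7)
The plan is to apply Theorem~\ref{thm:failed_zf}, which reframes minimality of $\hat{F}$ as the question of whether $\hat{S}=V(C_{n}\circ K_{r})\setminus\hat{F}$ is a maximal failed zero forcing set of $C_{n}\circ K_{r}$. This splits the task into showing (i) $\hat{F}$ is a fort, (ii) $\hat{S}$ stalls so no forcings are possible, and (iii) adding any single vertex of $\hat{F}$ to $\hat{S}$ starts a cascade that forces the entire graph.

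First, I would partition the vertices outside $\hat{F}$ into three types and count their neighbors in $\hat{F}$: a cycle vertex $u\in V(C_{n})\setminus F$ has all of its $V'_{u}$-neighbors outside $\hat{F}$, so its $\hat{F}$-neighbor count equals $\abs{N_{C_{n}}(u)\cap F}$, which is not $1$ because $F$ is a fort of $C_{n}$; a vertex in $V'_{u}$ with $u\notin F$ has no $\hat{F}$-neighbor; and a vertex in $V'_{u}\setminus\{v_{u}\}$ with $u\in F$ (which exists only when $r\geq 2$) has exactly the two $\hat{F}$-neighbors $u$ and $v_{u}$. The same case analysis simultaneously establishes that $\hat{F}$ is a fort and that $\hat{S}$ is a failed zero forcing set.

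Next, I would tackle maximality by adding an arbitrary $x\in\hat{F}$ to $\hat{S}$ and chasing forcings. If $x=v_{u_{0}}$ for some $u_{0}\in F$, then every neighbor of $v_{u_{0}}$ in $\hat{F}$ besides $u_{0}$ lies in $V'_{u_{0}}\setminus\{v_{u_{0}}\}$, which is entirely in $\hat{S}$, so $v_{u_{0}}$ has $u_{0}$ as its unique remaining white neighbor and forces $u_{0}$. This reduces to the case $x=u_{0}\in F$. For $x=u_{0}\in F$, I would invoke Corollary~\ref{cor:cycle_graph} to identify $I=V(C_{n})\setminus F$ as a maximal independent set of $C_{n}$, from which it follows that $u_{0}$ has at least one cycle-neighbor $w\in I$ and that every $I$-vertex has both of its cycle-neighbors in $F$. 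Once $u_{0}$ is gray, $w$'s white-neighbor count drops from $2$ to $1$ (namely the other cycle-neighbor of $w$, call it $u_{1}\in F$), so $w$ forces $u_{1}$. For $r\geq 2$, a gray vertex in $V'_{u_{1}}\setminus\{v_{u_{1}}\}$ then sees only $v_{u_{1}}$ as white and forces it, after which $u_{1}$'s sole white neighbor is its other cycle-neighbor; this continues the march around the cycle, either through another $F$-neighbor directly forced by $u_{1}$ or through the next $I$-vertex reapplying the same step.

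The main obstacle is verifying that the forcings truly close up around the cycle and culminate in forcing $v_{u_{0}}$, which is handled last only after $u_{0}$'s cycle-neighbors are both gray. The cleanest bookkeeping is an induction on how far the gray frontier has advanced along $C_{n}$, using the maximality of $I$ to guarantee an $I$-vertex is always available to propel the frontier and to rule out runs of three consecutive $F$-vertices on $C_{n}$. The case $r=1$ is the most delicate, because there are no auxiliary vertices in $V'_{u}$ to force $v_{u}$ as a free step, so pendant forcings must be interleaved with the cycle propagation; one must choose the direction of the march around $C_{n}$ so that each $u\in F$ is visited with both of its cycle-neighbors already gray, allowing $u$ to force $v_{u}$ in the same step.
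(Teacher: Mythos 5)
Your treatment of the fort property, the stalling of $\hat{S}$, the reduction from adding $v_{u_{0}}$ to adding $u_{0}$, and the propagation for $r\geq 2$ are all sound. For $r\geq 2$ your route is in fact slightly different from (and more robust than) the paper's: the paper lifts a forcing chronology of $C_{n}$ in which every vertex of $F\setminus\{u_{0}\}$ is claimed to be forced by a vertex of $S=V(C_{n})\setminus F$, whereas you let the auxiliary gray vertices of the $u$-copy of $K_{r}$ clean up $v_{u}$ as soon as $u$ turns gray, so that $u$ itself can advance the frontier along the cycle.

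The gap is the $r=1$ case, and it cannot be repaired: there the statement is false in general. When $r=1$ the pendant $v_{u}$ can only ever be forced by $u$, so a gray $u\in F$ retains the white neighbor $v_{u}$ until both of its cycle-neighbors are gray; consequently $u$ can never force along the cycle, and all cycle propagation must be carried by vertices of $I=V(C_{n})\setminus F$. Since $I$ is independent, each $w\in I$ needs one of its two $F$-neighbors grayed from elsewhere before it can force the other, so the gray frontier emanating from $u_{0}$ stalls at the first pair of adjacent $F$-vertices it meets in each direction; no choice of ``direction of march'' helps once $F$ contains two disjoint such pairs. Concretely, take $C_{6}$ with the minimal fort $F=\{2,3,5,6\}$ (complement of the maximal independent set $\{1,4\}$) and pendants $v_{i}$: then $\hat{F}=\{2,3,5,6,v_{2},v_{3},v_{5},v_{6}\}$ properly contains $\{3,5,v_{2},v_{3},v_{5},v_{6}\}$, which is a fort of $C_{6}\circ K_{1}$ (vertices $2$, $4$, $6$ each have exactly two neighbors in it, and $1$, $v_{1}$, $v_{4}$ have none); equivalently, $\hat{S}\cup\{2\}$ stalls. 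The same failure occurs for every $n$ and every minimal fort of $C_{n}$ containing two disjoint pairs of adjacent vertices. For what it is worth, the paper's own proof founders on the same point---its claim that each vertex of $F\setminus\{u\}$ can be forced by a vertex of $S$ fails for such $F$---so your instinct that $r=1$ is the delicate case was exactly right; it is where the proposition needs an additional hypothesis rather than a cleverer forcing order.
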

\begin{proof}
Note that every vertex in a copy of $K_{r}$ is either in $\hat{F}$ or has exactly two neighbors in $\hat{F}$. 
Furthermore, every vertex $v\in V(C_{n})$ such that $v\notin\hat{F}$ satisfies
\[
N_{C_{n}\circ K_{r}}(v)\cap\hat{F} = N_{C_{n}}(v)\cap F.
\]
Hence, $\hat{F}$ is a fort of $C_{n}\circ K_{r}$.
Now, we must show that $\hat{F}$ is minimal.
To this end, we show that $\hat{S}=V(C_{n}\circ K_{r})\setminus\hat{F}$ is a maximal failed zero forcing set of $C_{n}\circ K_{r}$. 

Since $F$ is a minimal fort of $C_{n}$, Theorem~\ref{thm:failed_zf} implies that 
\[
S=V(C_{n})\cap\hat{S}=V(C_{n})\setminus{F}
\]
is a maximal failed zero forcing set of $C_{n}$.
Thus, for each $u\in F$, $S\cup\{u\}$ is a zero forcing set of $C_{n}$.
Furthermore, there exists a collection of forces where each vertex of $F\setminus\{u\}$ is forced by a vertex from $S$.
This same collection of forcings can be applied in $C_{n}\circ K_{r}$ since for each $v\in S$ the $v$-copy of $K_{r}$ does not intersect $\hat{F}$. 
Therefore, for any $u\in F$, $\hat{S}\cup\{u\}$ is a zero-forcing set of $C_{n}\circ K_{r}$.

Finally, let $u\in F$ and let $v_{u}\in\hat{F}$ be the corresponding vertex from the $u$-copy of $K_{r}$.
Then, $\hat{S}\cup\{v_{u}\}$ is a zero forcing set of $C_{n}\circ K_{r}$ since $v_{u}$ can force $u$ at which point we have a zero forcing set by the previous argument. 
\end{proof}

Note that Proposition~\ref{prop:fort_corona_Cn_Kr} does not hold when replacing $C_{n}$ by a general graph $G\in\mathbb{G}$.
Indeed, consider the \emph{ladder graph} $l_{4}=K_{2}\Box P_{4}$ and its minimal fort depicted on the left of Figure~\ref{fig:ladder4_corona_k1}. 
Extending this minimal fort of $l_{4}$ to a fort of $l_{4}\circ K_{1}$ via the construction in Proposition~\ref{prop:fort_corona_Cn_Kr} would yield the fort in the center of Figure~\ref{fig:ladder4_corona_k1}.
However, this fort is not minimal as the rightmost depiction of Figure~\ref{fig:ladder4_corona_k1} demonstrates. 

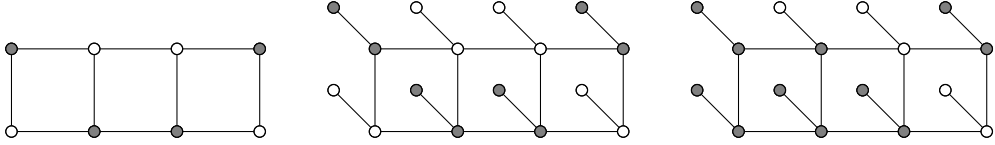
\begin{figure}[ht]
\centering
\begin{tikzpicture}[every node/.style={circle,draw=black,inner sep=1.5},scale=1.1]
\begin{scope}
    \draw[black] (0,0)--(3,0)--(3,1)--(0,1)--cycle;
    \draw[black] (1,0)--(1,1);
    \draw[black] (2,0)--(2,1);
    \foreach \x in {0,1,...,3}
    \foreach \y in {0,1}
        {
            \draw[fill=white] (\x,\y) circle (2pt) node {};
        }
    \node[fill=gray] at (1,0) {};
    \node[fill=gray] at (2,0) {};
    \node[fill=gray] at (0,1) {};
    \node[fill=gray] at (3,1) {};
\end{scope}
\begin{scope}[xshift=125]
    \draw[black] (0,0)--(3,0)--(3,1)--(0,1)--cycle;
    \foreach \j in {1,2}
    {
        \draw[black] (\j,0)--(\j,1);
    }
    \foreach \i in {0,1,2,3}
    {
        \draw[black] (\i,0)--(\i-0.5,0.5);
        \draw[black] (\i,1)--(\i-0.5,1.5);
    }
    \foreach \x in {0,1,...,3}
    \foreach \y in {0,1}
        {
            \draw[fill=white] (\x,\y) circle (2pt) node {};
            \draw[fill=white] (\x-.5,\y+.5) circle (2pt) node {};
        }
    \node[fill=gray] at (0,1) {};
    \node[fill=gray] at (1,0) {};
    \node[fill=gray] at (2,0) {};
    \node[fill=gray] at (3,1) {};
    \node[fill=gray] at (-.5,1.5) {};
    \node[fill=gray] at (0.5,0.5) {};
    \node[fill=gray] at (1.5,0.5) {};
    \node[fill=gray] at (2.5,1.5) {};
\end{scope}
\begin{scope}[xshift=250]
    \draw[black] (0,0)--(3,0)--(3,1)--(0,1)--cycle;
    \foreach \j in {1,2}
    {
        \draw[black] (\j,0)--(\j,1);
    }
    \foreach \i in {0,1,2,3}
    {
        \draw[black] (\i,0)--(\i-0.5,0.5);
        \draw[black] (\i,1)--(\i-0.5,1.5);
    }
    \foreach \x in {0,1,...,3}
    \foreach \y in {0,1}
        {
            \draw[fill=white] (\x,\y) circle (2pt) node {};
            \draw[fill=white] (\x-.5,\y+.5) circle (2pt) node {};
        }
    \node[fill=gray] at (0,1) {};
    \node[fill=gray] at (1,0) {};
    \node[fill=gray] at (2,0) {};
    \node[fill=gray] at (3,1) {};
    \node[fill=gray] at (0,0) {};
    \node[fill=gray] at (1,1) {};
    \node[fill=gray] at (-.5,.5) {};
    \node[fill=gray] at (-.5,1.5) {};
    \node[fill=gray] at (0.5,0.5) {};
    \node[fill=gray] at (1.5,0.5) {};
    \node[fill=gray] at (2.5,1.5) {};
\end{scope}
\end{tikzpicture}
\caption{A minimal fort (white) and maximal failed zero forcing set (gray) of the ladder graph $l_{4}=K_{2}\Box P_{4}$ (left), the corresponding extension to a fort of $l_{4}\circ K_{1}$ (center), and a minimal fort (white) and maximal failed zero forcing set (gray) of $l_{4}\circ K_1$ (right).}
\label{fig:ladder4_corona_k1}
\end{figure}

The following corollary follows immediately from Proposition~\ref{prop:fort_corona_Cn_Kr} and Corollary~\ref{cor:cycle_graph_explicit}. 
\begin{corollary}\label{cor:sunlet}
The sunlet graph $C_{n}\circ K_{1}$ has at least $\nint{\psi^{n}}$ minimal forts for $n\geq 10$.
\end{corollary}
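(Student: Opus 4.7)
The plan is to apply Proposition~\ref{prop:fort_corona_Cn_Kr} in the special case $r=1$ and then invoke the explicit count of minimal forts of $C_{n}$ furnished by Corollary~\ref{cor:cycle_graph_explicit}. Specifically, I would set up an injection from the collection $\mathcal{F}_{C_{n}}$ of minimal forts of $C_{n}$ into the collection $\mathcal{F}_{C_{n}\circ K_{1}}$ of minimal forts of the sunlet graph.

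First, I would observe that when $r=1$, each $u$-copy of $K_{1}$ in the corona $C_{n}\circ K_{1}$ consists of a single pendant vertex, which I will call $v_{u}$. Thus the auxiliary choice of a vertex from each $u$-copy in the hypothesis of Proposition~\ref{prop:fort_corona_Cn_Kr} is forced, and for every minimal fort $F\in\mathcal{F}_{C_{n}}$ the set
\[
\hat{F}=F\cup\{v_{u}\colon u\in F\}
\]
is a minimal fort of $C_{n}\circ K_{1}$ by Proposition~\ref{prop:fort_corona_Cn_Kr}.

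Next, I would verify that the assignment $F\mapsto\hat{F}$ is injective. This is immediate because $\hat{F}\cap V(C_{n})=F$, so two distinct minimal forts of $C_{n}$ produce two distinct minimal forts of $C_{n}\circ K_{1}$. Hence $\abs{\mathcal{F}_{C_{n}\circ K_{1}}}\geq\abs{\mathcal{F}_{C_{n}}}$.

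Finally, I would apply Corollary~\ref{cor:cycle_graph_explicit}, which asserts that $\abs{\mathcal{F}_{C_{n}}}=\nint{\psi^{n}}$ for $n\geq 10$, to conclude $\abs{\mathcal{F}_{C_{n}\circ K_{1}}}\geq\nint{\psi^{n}}$ in the stated range. There is no serious obstacle here, since all substantive work has been done in Proposition~\ref{prop:fort_corona_Cn_Kr} and Corollary~\ref{cor:cycle_graph_explicit}; the proof is essentially a one-line specialization of the corona construction followed by invoking the cycle count.
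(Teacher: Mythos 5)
Your proof is correct and matches the paper's approach exactly: the paper also derives this corollary immediately from Proposition~\ref{prop:fort_corona_Cn_Kr} (with $r=1$) together with Corollary~\ref{cor:cycle_graph_explicit}. Your explicit injectivity check of $F\mapsto\hat{F}$ is a nice touch but just spells out what the paper leaves implicit.
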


We note that there are minimal forts for the sunlet graph not described by Proposition~\ref{prop:fort_corona_Cn_Kr}; for example, see Figure~\ref{fig:c5_corona_k1}.

\begin{figure}[ht]
\centering
\begin{tikzpicture}[scale=.35,every node/.style={circle,draw=black,inner sep=1.5}]
\begin{scope}
    \node[fill=white] (1) at (0,4) {};
    \node[fill=gray] (2) at (-3.80,1.24) {};
    \node[fill=gray] (3) at (-2.35,-3.24) {};
    \node[fill=gray] (4) at (2.35,-3.24) {};
    \node[fill=gray] (5) at (3.80,1.24) {};

    \node[fill=white] (6) at (0,5) {};
    \node[fill=white] (7) at (-4.51,1.95) {};
    \node[fill=gray] (8) at (-3.06,-3.95) {};
    \node[fill=gray] (9) at (3.06,-3.95) {};
    \node[fill=white] (10) at (4.51,1.95) {};

    \foreach \x/\y in {1/2,2/3,3/4,4/5,5/1,1/6,2/7,3/8,4/9,5/10}
        \draw[black] (\x) -- (\y);
\end{scope}
\begin{scope}[xshift=400]
    \node[fill=gray] (1) at (0,4) {};
    \node[fill=white] (2) at (-3.80,1.24) {};
    \node[fill=gray] (3) at (-2.35,-3.24) {};
    \node[fill=gray] (4) at (2.35,-3.24) {};
    \node[fill=white] (5) at (3.80,1.24) {};

    \node[fill=gray] (6) at (0,5) {};
    \node[fill=white] (7) at (-4.51,1.95) {};
    \node[fill=white] (8) at (-3.06,-3.95) {};
    \node[fill=white] (9) at (3.06,-3.95) {};
    \node[fill=white] (10) at (4.51,1.95) {};

    \foreach \x/\y in {1/2,2/3,3/4,4/5,5/1,1/6,2/7,3/8,4/9,5/10}
        \draw[black] (\x) -- (\y);
\end{scope}
\end{tikzpicture}
\caption{Minimal forts (white) and maximal failed zero forcing sets (gray) of $C_{5}\circ K_{1}$ not described by Proposition~\ref{prop:fort_corona_Cn_Kr}.}
\label{fig:c5_corona_k1}
\end{figure}
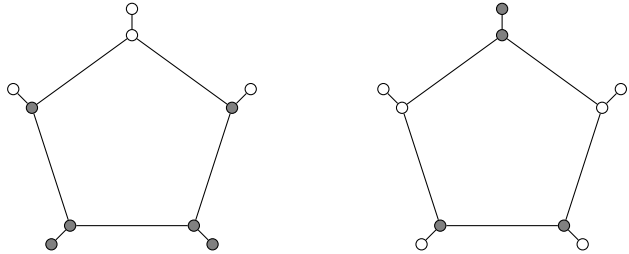

\subsection{Vertex Sum}\label{subsec:vert_sum}
The \emph{vertex sum} of $G$ and $G'$, denoted $G\oplus_{v}G'$, is the graph formed by identifying vertices $u\in V$ and $u'\in V'$ to form the vertex $v$.
In particular, $G\oplus_{v}G'$ has vertex set
\[
V(G\oplus_{v}G') = \left(V\setminus\{u\}\right)\cup\left(V'\setminus\{u'\}\right)\cup\{v\}
\]
and edge set
\[
E(G\oplus_{v}G') = \left(E\setminus{S_{u}}\right)\cup\left(E'\setminus{S_{u'}}\right)\cup S_{v},
\]
where $S_{u}$ denotes all edges of $G$ that are incident on $u$, $S_{u'}$ denotes all edges of $G'$ that are incident on $u'$, and $S_{v}=\left\{\{v,w\}\colon w\in N_{G}(u)\cup N_{G'}(u')\right\}$.

The following theorem shows how to combine maximal failed zero forcing sets of $G$ and $G'$ to create failed zero forcing sets of $G\oplus_{v}G'$.
Furthermore, we provide a characterization of when the resulting failed zero forcing sets are maximal. 
\begin{theorem}\label{thm:failed_zf_sum}
Let $G=(V,E)$ and $G'=(V',E')$ denote graphs in $\mathbb{G}$ such that $V\cap V'=\emptyset$. 
Let $G\oplus_{v}G'$ denote the vertex sum of $G$ and $G'$ formed by identifying vertices $u\in V$ and $u'\in V'$ to form the vertex $v$.
Suppose that $S\subseteq V\setminus\{u\}$ and $S'\subseteq V'\setminus\{u'\}$ are maximal failed zero forcing sets of $G$ and $G'$, respectively. 
Then, $S\cup S'$ is a failed zero forcing set of $G\oplus_{v}G'$.
Moreover, $S\cup S'$ is a maximal failed zero forcing set of $G\oplus_{v}G'$ if and only if $S\cup S'\cup\{v\}$ is a zero forcing set. 
\end{theorem}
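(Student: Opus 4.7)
The plan is to split the theorem into three subclaims: first, that $S \cup S'$ is a failed zero forcing set of $G \oplus_v G'$; second, that if $S \cup S'$ is maximal failed then $S \cup S' \cup \{v\}$ is a zero forcing set (the ``only if'' direction, which will be immediate from the definition of maximality since $v$ is white); third, the converse. The main obstacle will be the converse.

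For the first subclaim, I would exploit the fact that for any $x \in V \setminus \{u\}$, the neighborhood $N_{G \oplus_v G'}(x)$ is obtained from $N_G(x)$ by replacing $u$ with $v$ when $u \in N_G(x)$, and in particular $x$ has no neighbor in $V' \setminus \{u'\}$. Under the identification $u \leftrightarrow v$, the white neighbors of $x$ in $G \oplus_v G'$ (when $S \cup S'$ is gray) therefore match the white neighbors of $x$ in $G$ (when $S$ is gray); since $S$ is failed in $G$, no such $x$ can force in $G \oplus_v G'$, and the symmetric argument handles vertices of $S'$.

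The hard part will be the converse: assuming $S \cup S' \cup \{v\}$ is a zero forcing set, show that $S \cup S' \cup \{w\}$ is a zero forcing set for every white vertex $w$. The case $w = v$ is the hypothesis, so by symmetry of the vertex sum I may assume $w \in V \setminus (S \cup \{u\})$. The plan is to use maximality of $S$ in $G$ to obtain a sequence of forces $x_1 \to y_1, x_2 \to y_2, \ldots$ in $G$ from $S \cup \{w\}$ that fills $V$; letting $j$ be the least index with $y_j = u$ (such $j$ exists since $u$ is initially white), I would then mimic the first $j$ forces in $G \oplus_v G'$ starting from $S \cup S' \cup \{w\}$, reading $v$ in place of $u$. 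By the structural observation of the first subclaim and an induction on $i \leq j$, each mimicked force $x_i \to y_i$ is valid because $x_i \in V \setminus \{u\}$ has no $V'$-neighbors, so its white neighbors in $G \oplus_v G'$ coincide with its white neighbors in $G$ under $u \leftrightarrow v$. After the $j$-th mimicked step the gray set contains $S \cup S' \cup \{v\}$, a zero forcing set by hypothesis, and monotonicity of zero forcing finishes the job. The subtle point to verify is that for every $i \leq j$ the forcing vertex $x_i$ lies in $V \setminus \{u\}$; this follows because $u$ is white in $G$ throughout steps $1, \ldots, j-1$ by minimality of $j$, so $u$ cannot be the forcing vertex before it is itself forced.
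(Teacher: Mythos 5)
Your proposal is correct and follows essentially the same route as the paper: the first claim via the observation that vertices of $S$ (resp.\ $S'$) have the same white neighborhoods before and after the identification, the ``only if'' direction immediately from maximality, and the converse by running the forcing process of $G$ from $S\cup\{w\}$ inside $G\oplus_v G'$ until $u$ (i.e., $v$) is forced and then invoking the hypothesis that $S\cup S'\cup\{v\}$ is a zero forcing set. Your write-up is simply more explicit than the paper's about why each mimicked force remains valid.
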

\begin{proof}
It is clear that $S\cup S'$ is a failed zero forcing set of $G\oplus_{v}G'$.
All that remains is to show that $S\cup S'$ is maximal if and only if $S\cup S'\cup\{v\}$ is a zero forcing set of $G\oplus_{v}G'$.

To that end, suppose that $S\cup S'$ is maximal.
Then, adding any vertex to $S\cup S'$ results in a zero forcing set; hence, $S\cup S'\cup\{v\}$ is a zero forcing set of $G\oplus_{v}G'$.

Conversely, suppose that $S\cup S'\cup\{v\}$ is a zero forcing set of $G\oplus_{v}G'$.
Since $S$ is a maximal failed zero forcing set of $G$, it follows that $S\cup A$ is a zero forcing set for any non-empty $A\subseteq V\setminus{S}$.
Furthermore, $S\cup S'\cup A$ is a zero forcing set of $G\oplus_{v}G'$ since once $u$ is forced in $G$ we have $S\cup S'\cup\{v\}$ forced in $G\oplus_{v}G'$ at which point all vertices can be forced.
A similar argument shows that $S\cup S'\cup A'$ is a zero forcing set of $G\oplus_{v}G'$ for any non-empty $A'\subseteq V'\setminus{S'}$.
Therefore, $S\cup S'$ is a maximal failed zero forcing set of $G\oplus_{v}G'$.
\end{proof}

Next, we show how to combine minimal forts of $G$ and $G'$ to create forts of $G\oplus_{v}G'$.
Furthermore, we provide a characterization of when the resulting forts are minimal. 
\begin{corollary}\label{cor:fort_sum}
Let $G=(V,E)$ and $G'=(V',E')$ denote graphs in $\mathbb{G}$ such that $V\cap V'=\emptyset$. 
Let $G\oplus_{v}G'$ denote the vertex sum of $G$ and $G'$ formed by identifying vertices $u\in V$ and $u'\in V'$ to form the vertex $v$.
Suppose that $F\subseteq V$ and $F'\subseteq V'$ are minimal forts of $G$ and $G'$, respectively, such that $u\in F$ and $u'\in F'$.
Then, 
\[
\hat{F} = \left(F\setminus\{u\}\right)\cup\left(F'\setminus\{u'\}\right)\cup\{v\}
\]
is a fort of $G\oplus_{v}G'$.
Moreover, $\hat{F}$ is a minimal fort if and only if $\hat{F}\setminus\{v\}$ does not contain a fort of $G\oplus_{v}G'$.
\end{corollary}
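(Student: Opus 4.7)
The plan is to derive the first statement directly from Theorems~\ref{thm:failed_zf} and~\ref{thm:failed_zf_sum}, and to prove the equivalence via a neighborhood-tracking argument that transports forts across the identified vertex. By Theorem~\ref{thm:failed_zf}, $S := V \setminus F$ and $S' := V' \setminus F'$ are maximal failed zero forcing sets of $G$ and $G'$, respectively. Since $u \in F$ and $u' \in F'$, we have $S \subseteq V\setminus\{u\}$ and $S' \subseteq V'\setminus\{u'\}$, so Theorem~\ref{thm:failed_zf_sum} implies that $S \cup S'$ is a failed zero forcing set of $G \oplus_v G'$. The complement of $S\cup S'$ in $V(G\oplus_v G')$ is exactly $\hat{F}$, and the complement of any failed zero forcing set is a fort (as used in the first half of the proof of Theorem~\ref{thm:failed_zf}).

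The forward direction of the equivalence is immediate: if $\hat{F}$ is minimal and $F^{\ast}\subseteq \hat{F}\setminus\{v\}$ is a fort, then $v \in \hat{F}\setminus F^{\ast}$ forces $F^{\ast}$ to be a proper subset of $\hat{F}$, contradicting minimality.

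For the reverse direction, I would assume that $\hat{F}\setminus\{v\}$ contains no fort and suppose for contradiction that $\hat{F}'\subsetneq \hat{F}$ is a fort of $G\oplus_v G'$. By the assumption, $v \in \hat{F}'$, so we may write $\hat{F}' = A \cup B \cup \{v\}$ with $A \subseteq F\setminus\{u\}$ and $B\subseteq F'\setminus\{u'\}$. The crux is to show that $F_1 := A \cup\{u\}$ is a fort of $G$ and $F_2 := B\cup\{u'\}$ is a fort of $G'$. For any $w \in V\setminus F_1$, note $w \neq u$, and in $G\oplus_v G'$ the neighborhood of $w$ lies entirely in $(V\setminus\{u\})\cup\{v\}$, with $v$ replacing $u$ whenever $u \in N_G(w)$. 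A direct count yields
\[
\bigl|N_G(w)\cap F_1\bigr| = \bigl|N_{G\oplus_v G'}(w)\cap \hat{F}'\bigr|,
\]
since $N_G(w)\cap A$ matches on the $V\setminus\{u\}$ side while the contribution of $u$ on the left matches that of $v$ on the right. Because $\hat{F}'$ is a fort, the right-hand side differs from $1$, so $F_1$ is a (non-empty) fort of $G$. Minimality of $F$ then forces $F_1 = F$, that is, $A = F\setminus\{u\}$; a symmetric argument gives $B = F'\setminus\{u'\}$, whence $\hat{F}' = \hat{F}$, a contradiction.

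The main obstacle will be the neighborhood bookkeeping in the last paragraph: the vertex sum merges $u$ and $u'$ into $v$, so one must confirm that replacing $u$ by $v$ (and $u'$ by $v$) preserves the size of the intersection with the relevant fort on each side of the identification. Once this equality of counts is established, the minimality of $F$ and $F'$ immediately produces the desired contradiction.
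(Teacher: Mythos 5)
Your proof is correct, but you handle the biconditional by a genuinely different argument than the paper. The paper stays entirely inside the zero forcing framework: by Theorem~\ref{thm:failed_zf}, $\hat{F}$ is a minimal fort iff $S\cup S'$ is a maximal failed zero forcing set; by Theorem~\ref{thm:failed_zf_sum}, this holds iff $S\cup S'\cup\{v\}$ is a zero forcing set; and by Theorem~\ref{thm:fort-cover}, that holds iff its complement $\hat{F}\setminus\{v\}$ contains no fort. Your reverse direction instead works directly with the fort condition: any fort $\hat{F}'\subseteq\hat{F}$ containing $v$ restricts to forts $A\cup\{u\}$ of $G$ and $B\cup\{u'\}$ of $G'$ via the count $\abs{N_{G}(w)\cap(A\cup\{u\})}=\abs{N_{G\oplus_{v}G'}(w)\cap\hat{F}'}$, and minimality of $F$ and $F'$ then forces $\hat{F}'=\hat{F}$. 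The count is right: for $w\in V\setminus(A\cup\{u\})$ one has $w\notin\hat{F}'$, $N_{G\oplus_{v}G'}(w)\cap B=\emptyset$, and $u\in N_{G}(w)$ iff $v\in N_{G\oplus_{v}G'}(w)$. Your route is more self-contained (it bypasses Theorems~\ref{thm:failed_zf_sum} and~\ref{thm:fort-cover} for the equivalence) and in effect reproves, for forts contained in $\hat{F}$, the structural fact the paper records separately as Proposition~\ref{prop:exact}; the paper's route is shorter given the machinery already in place.

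One small imprecision in your first paragraph: the complement of a failed zero forcing set need not be a fort in general. For example, a single leaf of the star $K_{1,3}$ is a failed zero forcing set, but its complement is not a fort, since the leaf has exactly one neighbor (the center) in that complement. What Theorem~\ref{thm:failed_zf} actually uses is that the complement of a \emph{stalled} set (one from which no forcing is possible) is a fort. Here $S\cup S'$ is stalled in $G\oplus_{v}G'$ because $S$ and $S'$ are \emph{maximal} failed zero forcing sets: each $w\in S$ satisfies $\abs{N_{G\oplus_{v}G'}(w)\cap\hat{F}}=\abs{N_{G}(w)\cap F}\neq 1$ (using $u\in F$), and similarly for $S'$. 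So your conclusion that $\hat{F}$ is a fort stands; just cite stalling rather than mere failure.
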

\begin{proof}
It is clear that $\hat{F}$ is a fort of $G\oplus_{v}G'$.
All that remains is to show that $\hat{F}$ is minimal if and only if $\hat{F}\setminus\{v\}$ is not a fort.

To that end, let $S=V\setminus{F}$ and $S'=V'\setminus{F'}$.
Then, by Theorem~\ref{thm:failed_zf}, $\hat{F}$ is a minimal fort of $G\oplus_{v}G'$ if and only if $S\cup S'$ is a maximal failed zero forcing set, which, by Theorem~\ref{thm:failed_zf_sum}, is true if and only if $S\cup S'\cup\{v\}$ is a zero forcing set. 
Hence, all we need to show is that $\hat{F}\setminus\{v\}$ does not contain a fort if and only if $S\cup S'\cup\{v\}$ is a zero forcing set.

Suppose that $S\cup S'\cup\{v\}$ is a zero forcing set of $G\oplus_{v}G'$.
Then, by Theorem~\ref{thm:fort-cover}, $S\cup S'\cup\{v\}$ must intersect every fort of $G\oplus_{v}G'$.
Since $\hat{F}\setminus\{v\}$ is the complement of $S\cup S'\cup\{v\}$, it follows
that $\hat{F}\setminus\{v\}$ does not contain a fort. 

Conversely, suppose that $S\cup S'\cup\{v\}$ is not a zero forcing set of $G\oplus_{v}G'$.
Then, by Theorem~\ref{thm:fort-cover}, $S\cup S'\cup\{v\}$ does not intersect every fort of $G\oplus_{v}G'$.
Since $\hat{F}\setminus\{v\}$ is the complement of $S\cup S'\cup\{v\}$, it follows that $\hat{F}\setminus\{v\}$ contains a fort. 
\end{proof}

Before proceeding, we note that every maximal failed zero forcing set of $G\oplus_{v}G'$ that does not include $v$ can be constructed as in Theorem~\ref{thm:failed_zf_sum}.
Equivalently, every minimal fort of $G\oplus_{v}G'$ that contains $v$ can be constructed as in Corollary~\ref{cor:fort_sum}.
For reference, we state and prove this observation below. 
\begin{proposition}\label{prop:exact}
Let $G=(V,E)$ and $G'=(V',E')$ denote graphs in $\mathbb{G}$ such that $V\cap V'=\emptyset$. 
Let $G\oplus_{v}G'$ denote the vertex sum of $G$ and $G'$ formed by identifying vertices $u\in V$ and $u'\in V'$ to form the vertex $v$.
Suppose that $\hat{S}$ is a maximal failed zero forcing set of $G\oplus_{v}G'$ such that $v\notin\hat{S}$.
Then, $\hat{F}=V(G\oplus_{v}G')\setminus{\hat{S}}$ is a minimal fort of $G\oplus_{v}G'$ such that $v\in\hat{F}$.
Moreover, $S=V\cap\hat{S}$ and $S'=V'\cap\hat{S}$ are maximal failed zero forcing sets of $G$ and $G'$, respectively, such that $\hat{S}=S\cup S'$.
Equivalently, $F=V\setminus{S}$ and $F'=V'\setminus{S'}$ are minimal forts of $G$ and $G'$, respectively, such that $\hat{F}=\left(F\setminus\{u\}\right)\cup\left(F'\setminus\{u'\}\right)\cup\{v\}$.
\end{proposition}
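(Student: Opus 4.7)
The plan is to prove the minimal-fort form of the proposition and recover the failed-zero-forcing-set form via Theorem~\ref{thm:failed_zf}. I begin by fixing a convenient identification: let $\iota_G\colon V \to V(G\oplus_v G')$ send $u\mapsto v$ and fix every other vertex, and define $\iota_{G'}\colon V' \to V(G\oplus_v G')$ analogously. Each of $\iota_G$ and $\iota_{G'}$ is a graph isomorphism onto the induced subgraph on its image. Define $S = \{w\in V : \iota_G(w)\in \hat{S}\}$ and $S' = \{w'\in V' : \iota_{G'}(w')\in \hat{S}\}$, and set $F = V\setminus S$ and $F' = V'\setminus S'$. Because $v\notin \hat{S}$, we have $u\notin S$ and $u'\notin S'$, hence $u\in F$ and $u'\in F'$. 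Since $V(G\oplus_v G')\setminus\{v\}$ partitions as $(V\setminus\{u\})\sqcup(V'\setminus\{u'\})$, we obtain $\hat{S} = \iota_G(S)\cup \iota_{G'}(S')$ and $\hat{F} = \iota_G(F)\cup \iota_{G'}(F')$, with $v\in \hat{F}$. Theorem~\ref{thm:failed_zf} applied to $\hat{S}$ yields that $\hat{F}$ is a minimal fort of $G\oplus_v G'$ containing $v$.

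I next verify that $F$ is a fort of $G$ (the argument for $F'$ is symmetric). For each $w\in S$, the identity $N_{G\oplus_v G'}(w) = (N_G(w)\setminus\{u\})\cup N^*$, where $N^* = \{v\}$ if $u\in N_G(w)$ and $N^* = \emptyset$ otherwise, combined with $v\in \hat{F}$, $u\in F$, and $\hat{S}\cap(V\setminus\{u\}) = S$, yields $|N_{G\oplus_v G'}(w)\cap \hat{F}| = |N_G(w)\cap F|$. Since $\hat{F}$ is a fort, neither quantity equals $1$.

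The essential step is the minimality of $F$, which I prove by contrapositive: given a fort $F_0\subsetneq F$ of $G$, I construct a fort $\hat{F}_0\subsetneq \hat{F}$ of $G\oplus_v G'$, contradicting the minimality of $\hat{F}$. If $u\notin F_0$, take $\hat{F}_0 = F_0$ under the inclusion $V\setminus\{u\}\subseteq V(G\oplus_v G')$; the fort check at a vertex $w\notin \hat{F}_0$ splits into three cases: $w\in V'\setminus\{u'\}$ gives intersection $\emptyset$, $w = v$ gives intersection $N_G(u)\cap F_0$, and $w\in V\setminus\{u\}$ with $w\notin F_0$ gives intersection $N_G(w)\cap F_0$, and in each case the cardinality is not $1$ since $F_0$ is a fort of $G$ and $u\notin F_0$. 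If $u\in F_0$, take $\hat{F}_0 = \iota_G(F_0)\cup \iota_{G'}(F')$; the analogous case analysis, using that $F'$ is a fort of $G'$, again shows the fort property. In both cases $\hat{F}_0\subsetneq \hat{F}$ since $\iota_G(F\setminus F_0)$ is nonempty and (in the case $u\in F_0$) does not contain $v$. The minimality of $F'$ is symmetric, and Theorem~\ref{thm:failed_zf} converts the minimal-fort conclusions into the maximal-failed-zero-forcing-set conclusions. I expect the main obstacle to be the bookkeeping in the fort verification of $\hat{F}_0$: one must correctly separate the edges of $G\oplus_v G'$ incident to $v$ into $G$- and $G'$-contributions and track how they interact with the three distinguished vertex types.
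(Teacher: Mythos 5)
Your proposal is correct, but it takes a genuinely different route from the paper. The paper works entirely on the failed-zero-forcing-set side: it shows $S$ is a failed zero forcing set because any forcing inside $S$ would already occur in $\hat{S}$, and it establishes maximality by showing $S\cup A$ is a zero forcing set of $G$ for every non-empty $A\subseteq V\setminus S$, simulating the forcing process of $\hat{S}\cup A$ in $G\oplus_v G'$ and observing that forcings pass through the cut vertex $v$ (so that $u$ eventually gets forced, after which $S\cup\{u\}$ finishes the job). You instead stay entirely on the fort side: you verify that $F$ is a fort by matching neighborhood counts across the identification $u\leftrightarrow v$, and you prove minimality by contrapositive, lifting a hypothetical proper sub-fort $F_0\subsetneq F$ of $G$ to a proper sub-fort $\hat{F}_0\subsetneq\hat{F}$ of $G\oplus_v G'$ (with the case split on whether $u\in F_0$, gluing in $F'$ when it is). By Theorem~\ref{thm:fort-cover} the paper's claim that $S\cup A$ forces for all non-empty $A$ is equivalent to your claim that $F$ has no proper sub-fort, so the two arguments are dual; yours is purely static and combinatorial and avoids reasoning about forcing chains, at the cost of the two-case lifting construction, while the paper's is shorter once one is comfortable propagating forcings across the cut vertex. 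Your case analysis checks out (in particular the check at $w=v$ in the case $u\notin F_0$, and the use of the already-established fort property of $F'$ in the case $u\in F_0$), so there is no gap.
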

\begin{proof}
We prove the statement about maximal failed zero forcing sets and note that the equivalent statement regarding minimal forts follows from Theorem~\ref{thm:failed_zf}.
To this end, let $\hat{S}$ denote a maximal failed zero forcing set of $G\oplus_{v}G'$ such that $v\notin\hat{S}$ and define $S=V\cap\hat{S}$ and $S'=V'\cap\hat{S}$.
Clearly, $S$ and $S'$ are failed zero forcing sets of $G$ and $G'$, respectively, since if any vertex in either set could force that would contradict $\hat{S}$ being a maximal failed zero forcing set of $G\oplus_{v}G'$.

Moreover, since $\hat{S}$ is a maximal failed zero forcing set of $G\oplus_{v}G'$, we know that $\hat{S}\cup\{v\}$ is a zero forcing set. 
Since no vertex in $V\setminus\{u\}$ is forced by a vertex in $V'\setminus\{u'\}$, it follows that $S\cup\{u\}$ is a zero forcing set of $G$.
In addition, $S\cup A$  is a zero forcing set of $G$ for any non-empty $A\subset V\setminus{S}$.
If $u\in A$, then this statement follows from the fact that $S\cup\{u\}$ is a zero forcing set of $G$. 
Suppose then that $u\notin A$.
Since $\hat{S}\cup A$ is a zero forcing set of $G\oplus_{v}G'$, it follows that the vertices in $S\cup A$ will eventually force $u$ at which point we have a zero forcing set of $G$.

Therefore, $S$ is a maximal failed zero forcing set of $G$.
A similar argument shows that $S'$ is a maximal failed zero forcing set of $G'$.
\end{proof}

The \emph{windmill graph}, denoted $Wd(r,k)$ for $r\geq 3$ and $k\geq 2$, is constructed via the vertex sum of $k$ disjoint copies of $K_{r}$, where a vertex from each copy of $K_{r}$ is identified to form the vertex $v$. 
In particular,
\begin{align*}
Wd(r,2) &= K_{r}\oplus_{v}K_{r} \\
Wd(r,k) &= Wd(r,k-1)\oplus_{v}K_{r},~k\geq 3. \\
\end{align*}

Note that the vertex $v$ in the graph $Wd(r,k)$ is a cut vertex since its deletion results in $k$ connected components.
The following corollary shows that $Wd(r,k)$ has an exponential number of minimal forts in the order of the graph that contain the cut vertex $v$.
Hence, the windmill graph is another example of a family of graphs for which the bound in Theorem~\ref{thm:zf_cart_bound} is not very good. 
\begin{corollary}\label{cor:windmill}
The windmill graph $Wd(r,k)$ has exactly $(r-1)^{k}$ minimal forts that contain the cut vertex $v$ and exactly $k\binom{r-1}{2}$ minimal forts that do not contain $v$. 
\end{corollary}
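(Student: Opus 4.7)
The plan is to partition the minimal forts of $Wd(r,k)$ according to whether they contain the cut vertex $v$, and analyze each class directly from the structure of the graph. Let $P_{1},\ldots,P_{k}$ denote the vertex sets of the $k$ petals, so that $V(Wd(r,k))=\{v\}\cup P_{1}\cup\cdots\cup P_{k}$ with $|P_{i}|=r-1$; recall that $N(v)=P_{1}\cup\cdots\cup P_{k}$ while $N(w)=\{v\}\cup(P_{i}\setminus\{w\})$ for each $w\in P_{i}$. This uniform neighborhood structure lets the two counts be obtained by elementary case analysis rather than induction via Corollary~\ref{cor:fort_sum}.

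For the minimal forts containing $v$, I would show that every such fort $\hat{F}$ satisfies $|\hat{F}\cap P_{i}|=1$ for each $i$. The case $|\hat{F}\cap P_{i}|=0$ is ruled out because any $w\in P_{i}$ would then have exactly one neighbor in $\hat{F}$ (namely $v$), violating the fort property. The case $|\hat{F}\cap P_{i}|\geq 2$ is ruled out by choosing any $u\in\hat{F}\cap P_{i}$ and showing $\hat{F}\setminus\{u\}$ is still a fort: the only vertices whose intersection with $\hat{F}$ shrinks lie in $P_{i}\setminus\{u\}$, and for each such $w\notin\hat{F}$ we have $|N(w)\cap(\hat{F}\setminus\{u\})|=|\hat{F}\cap P_{i}|\geq 2$, while $|N(u)\cap(\hat{F}\setminus\{u\})|=1+(|\hat{F}\cap P_{i}|-1)\geq 2$; this contradicts minimality. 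Conversely, a direct check confirms every set of the form $\{v,u_{1},\ldots,u_{k}\}$ with $u_{i}\in P_{i}$ is a minimal fort: removing $v$ makes each $u_{i}$ the unique neighbor into the remaining set for vertices in $P_{i}\setminus\{u_{i}\}$, and removing any $u_{i}$ makes $v$ the unique neighbor of $u_{i}$ into the remaining set. This yields exactly $(r-1)^{k}$ such minimal forts.

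For the minimal forts $F$ with $v\notin F$, I would show $F$ must lie inside a single petal and have exactly two elements. Note first that if $|F\cap P_{i}|=1$ for some $i$, the petal-mate of that vertex would be a unique neighbor into $F$, so we must have $|F\cap P_{i}|\in\{0\}\cup\{2,\ldots,r-1\}$ for every $i$; also $|F|\geq 2$ since otherwise $v$ would have a unique neighbor in $F$. Next, whenever $|F\cap P_{i}|\geq 2$, the set $F\cap P_{i}$ is already a fort of $Wd(r,k)$: $v$ sees at least two elements, the remaining vertices of $P_{i}$ see the full set $F\cap P_{i}$, and vertices of other petals see none. Minimality therefore forces $F$ to coincide with $F\cap P_{i}$ for exactly one $i$, and any $2$-subset of $P_{i}$ already being a fort forces $|F|=2$. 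Summing $\binom{r-1}{2}$ over the $k$ petals yields $k\binom{r-1}{2}$.

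The main obstacle is the minimality bookkeeping, specifically verifying that removing a single vertex from an oversized candidate still leaves a fort in the cases $|\hat{F}\cap P_{i}|\geq 2$ (for forts containing $v$) or $|F\cap P_{i}|\geq 3$ (for forts avoiding $v$). Once the removed vertex is chosen to lie in the petal where the intersection is too large, the uniform closed neighborhoods of petal vertices make the remaining verifications essentially mechanical, so the proof should be short and self-contained.
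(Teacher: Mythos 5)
Your proof is correct, but it takes a genuinely different route from the paper. The paper proves this corollary by induction on $k$, invoking the vertex-sum machinery of the preceding subsection: Corollary~\ref{cor:fort_sum} to build minimal forts of $Wd(r,k+1)=Wd(r,k)\oplus_{v}K_{r}$ containing $v$ from those of $Wd(r,k)$ and $K_{r}$, and Proposition~\ref{prop:exact} to certify that this construction is exhaustive. You instead classify all minimal forts directly from the explicit neighborhood structure $N(v)=P_{1}\cup\cdots\cup P_{k}$ and $N(w)=\{v\}\cup(P_{i}\setminus\{w\})$, showing that a minimal fort containing $v$ meets each petal in exactly one vertex and that a minimal fort avoiding $v$ is a $2$-subset of a single petal. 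Your argument is self-contained and arguably more transparent --- it delivers the complete description of the minimal forts (which the paper only remarks on afterward) without any of the vertex-sum lemmas --- whereas the paper's route is chosen to showcase that machinery. One step you should tighten: to verify that $\{v,u_{1},\ldots,u_{k}\}$ is minimal it does not suffice to check that the co-singletons $\hat{F}\setminus\{x\}$ fail to be forts, since a smaller proper subset could in principle be a fort; you must rule out every proper non-empty subset $A$. Your witnesses do extend immediately --- if $v\in A$ and some $u_{i}\notin A$ then $u_{i}$ sees only $v$ in $A$, and if $v\notin A$ then any $u_{j}\in A$ has a petal-mate in $P_{j}\setminus\{u_{j}\}$ (non-empty since $r\geq 3$) seeing exactly $u_{j}$ --- but this should be said explicitly rather than phrased as two single-vertex removals.
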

\begin{proof}
We proceed via induction on $k$.
Let $k=2$ and let $K_{r}$ and $K'_{r}$ denote disjoint copies of the complete graph of order $r$.
Also, let $Wd(r,2) = K_{r}\oplus_{v}K'_{r}$ denote the vertex sum of $K_{r}$ and $K'_{r}$ formed by identifying vertices $u\in V(K_{r})$ and $u'\in V(K'_{r})$ to form the vertex $v$.
Now, let $F\subset V(K_{r})$ and $F'\subset V(K'_{r})$ denote two element subsets such that $u\in F$ and $u'\in F'$.
Then, $F$ and $F'$ are minimal forts of $K_{r}$ and $K'_{r}$; moreover, Corollary~\ref{cor:fort_sum} implies that
\[
\hat{F} = \left(F\setminus\{u\}\right)\cup\left(F'\setminus\{u'\}\right)\cup\{v\}
\]
is a minimal fort of $Wd(r,2)$ since $\hat{F}\setminus\{v\}$ does not contain a fort as it only contains a single vertex from each copy of $K_{r}$.
Note that there are $(r-1)$ pairs of vertices in $K_{r}$ and $K'_{r}$ that include $u$ and $u'$, respectively.
Hence, there are at least $(r-1)^{2}$ minimal forts of $Wd(r,2)$ that include the vertex $v$.
Moreover, by Proposition~\ref{prop:exact}, these are the only minimal forts of $Wd(r,2)$ that contain the vertex $v$.
The minimal forts that do not include the vertex $v$ must contain $2$ vertices from $V(K_{r})\setminus\{u\}$ or two vertices from $V(K'_{r})\setminus\{u'\}$.
Therefore, there are $2\binom{r-1}{2}$ minimal forts of $Wd(r,2)$ that do not contain $v$.  

Now, let $k\geq 2$ and suppose that $Wd(r,k)$ has exactly $(r-1)^{k}$ minimal forts that contain the vertex $v$ and $k\binom{r-1}{2}$ minimal forts that do not contain $v$. 
Further, let $Wd(r,k+1)=Wd(r,k)\oplus_{v} K_{r}$ denote the vertex sum of $Wd(r,k)$ and $K_{r}$ formed by identifying vertices $v\in V(Wd(r,k))$ and $u\in V(K_{r})$ to form the vertex $v$. 
Let $F\subset V(Wd(r,k))$ be a minimal fort of $Wd(r,k)$ that contains $v$ and let $F'\subset V(K_{r})$ be a minimal fort of $K_{r}$ that contains $u$.
Then, Corollary~\ref{cor:fort_sum} implies that
\[
\hat{F} = \left(F\setminus\{v\}\right)\cup\left(F'\setminus\{u\}\right)\cup\{v\}
\]
is a minimal fort of $Wd(r,k+1)$ since $\hat{F}\setminus\{v\}$ does not contain a fort as it only contains a single vertex from each copy of $K_{r}$.
Note that there are $(r-1)^{k}$ minimal forts of $Wd(r,k)$ that contain the vertex $v$ and there are $(r-1)$ minimal forts of $K_{r}$ that contain the vertex $u$.
Hence, there are at least $(r-1)^{k+1}$ minimal forts of $Wd(r,k+1)$ that contain the vertex $v$.
Moreover, by Proposition~\ref{prop:exact}, these are the only minimal forts of $Wd(r,k+1)$ that contain the vertex $v$.
The minimal forts that do not include the vertex $v$ are made up of a minimal fort of $Wd(r,k)$ that does not include the vertex $v$ or a minimal fort of $K_{r}$ that does not include the vertex $u$.
Therefore, there are $k\binom{r-1}{2}$ + $\binom{r-1}{2} = (k+1)\binom{r-1}{2}$ minimal forts of $Wd(r,k+1)$ that do not contain $v$. 
\end{proof}

Observe that Corollary~\ref{cor:windmill} gives a complete description of all the minimal forts of the windmill graph $Wd(r,k)$.
For example, an enumeration of the minimal forts for $Wd(3,3)$ is provided in Figure~\ref{fig:forts_Wd(3,3)}.

\begin{figure}[ht]
    \centering
    \begin{tikzpicture}[every node/.style={circle,draw=black,inner sep=1.5},scale=1.1]
    \begin{scope}
    \draw[black] (-1,0)--(1,0)--(.5,{sqrt(3)/2})--(0,0)--(-.5,{sqrt(3)/2})--cycle;
    \draw[black] (0,0)--(-.5,{-sqrt(3)/2})--(.5,{-sqrt(3)/2})--cycle;
    \node[fill=white] at (-1,0) {};
    \node[fill=white] at (0,0) {};
    \node[fill=gray] at (1,0) {};
    \node[fill=white] at (0.5,{sqrt(3)/2}) {};
    \node[fill=gray] at (-0.5,{sqrt(3)/2}) {};
    \node[fill=gray] at (-0.5,{-sqrt(3)/2}) {};
    \node[fill=white] at (0.5,{-sqrt(3)/2}) {};
    \end{scope}
    
    \begin{scope}[xshift=75]
    \draw[black] (-1,0)--(1,0)--(.5,{sqrt(3)/2})--(0,0)--(-.5,{sqrt(3)/2})--cycle;
    \draw[black] (0,0)--(-.5,{-sqrt(3)/2})--(.5,{-sqrt(3)/2})--cycle;
    \node[fill=white] at (-1,0) {};
    \node[fill=white] at (0,0) {};
    \node[fill=gray] at (1,0) {};
    \node[fill=white] at (0.5,{sqrt(3)/2}) {};
    \node[fill=gray] at (-0.5,{sqrt(3)/2}) {};
    \node[fill=white] at (-0.5,{-sqrt(3)/2}) {};
    \node[fill=gray] at (0.5,{-sqrt(3)/2}) {};
    \end{scope}

    \begin{scope}[xshift=150]
    \draw[black] (-1,0)--(1,0)--(.5,{sqrt(3)/2})--(0,0)--(-.5,{sqrt(3)/2})--cycle;
    \draw[black] (0,0)--(-.5,{-sqrt(3)/2})--(.5,{-sqrt(3)/2})--cycle;
    \node[fill=white] at (-1,0) {};
    \node[fill=white] at (0,0) {};
    \node[fill=white] at (1,0) {};
    \node[fill=gray] at (0.5,{sqrt(3)/2}) {};
    \node[fill=gray] at (-0.5,{sqrt(3)/2}) {};
    \node[fill=gray] at (-0.5,{-sqrt(3)/2}) {};
    \node[fill=white] at (0.5,{-sqrt(3)/2}) {};
    \end{scope}

    \begin{scope}[xshift=225]
    \draw[black] (-1,0)--(1,0)--(.5,{sqrt(3)/2})--(0,0)--(-.5,{sqrt(3)/2})--cycle;
    \draw[black] (0,0)--(-.5,{-sqrt(3)/2})--(.5,{-sqrt(3)/2})--cycle;
    \node[fill=white] at (-1,0) {};
    \node[fill=white] at (0,0) {};
    \node[fill=white] at (1,0) {};
    \node[fill=gray] at (0.5,{sqrt(3)/2}) {};
    \node[fill=gray] at (-0.5,{sqrt(3)/2}) {};
    \node[fill=white] at (-0.5,{-sqrt(3)/2}) {};
    \node[fill=gray] at (0.5,{-sqrt(3)/2}) {};
    \end{scope}

    \begin{scope}[yshift=-75]
    \draw[black] (-1,0)--(1,0)--(.5,{sqrt(3)/2})--(0,0)--(-.5,{sqrt(3)/2})--cycle;
    \draw[black] (0,0)--(-.5,{-sqrt(3)/2})--(.5,{-sqrt(3)/2})--cycle;
    \node[fill=gray] at (-1,0) {};
    \node[fill=white] at (0,0) {};
    \node[fill=gray] at (1,0) {};
    \node[fill=white] at (0.5,{sqrt(3)/2}) {};
    \node[fill=white] at (-0.5,{sqrt(3)/2}) {};
    \node[fill=gray] at (-0.5,{-sqrt(3)/2}) {};
    \node[fill=white] at (0.5,{-sqrt(3)/2}) {};
    \end{scope}
    
    \begin{scope}[xshift=75,yshift=-75]
    \draw[black] (-1,0)--(1,0)--(.5,{sqrt(3)/2})--(0,0)--(-.5,{sqrt(3)/2})--cycle;
    \draw[black] (0,0)--(-.5,{-sqrt(3)/2})--(.5,{-sqrt(3)/2})--cycle;
    \node[fill=gray] at (-1,0) {};
    \node[fill=white] at (0,0) {};
    \node[fill=gray] at (1,0) {};
    \node[fill=white] at (0.5,{sqrt(3)/2}) {};
    \node[fill=white] at (-0.5,{sqrt(3)/2}) {};
    \node[fill=white] at (-0.5,{-sqrt(3)/2}) {};
    \node[fill=gray] at (0.5,{-sqrt(3)/2}) {};
    \end{scope}

    \begin{scope}[xshift=150,yshift=-75]
    \draw[black] (-1,0)--(1,0)--(.5,{sqrt(3)/2})--(0,0)--(-.5,{sqrt(3)/2})--cycle;
    \draw[black] (0,0)--(-.5,{-sqrt(3)/2})--(.5,{-sqrt(3)/2})--cycle;
    \node[fill=gray] at (-1,0) {};
    \node[fill=white] at (0,0) {};
    \node[fill=white] at (1,0) {};
    \node[fill=gray] at (0.5,{sqrt(3)/2}) {};
    \node[fill=white] at (-0.5,{sqrt(3)/2}) {};
    \node[fill=gray] at (-0.5,{-sqrt(3)/2}) {};
    \node[fill=white] at (0.5,{-sqrt(3)/2}) {};
    \end{scope}

    \begin{scope}[xshift=225,yshift=-75]
    \draw[black] (-1,0)--(1,0)--(.5,{sqrt(3)/2})--(0,0)--(-.5,{sqrt(3)/2})--cycle;
    \draw[black] (0,0)--(-.5,{-sqrt(3)/2})--(.5,{-sqrt(3)/2})--cycle;
    \node[fill=gray] at (-1,0) {};
    \node[fill=white] at (0,0) {};
    \node[fill=white] at (1,0) {};
    \node[fill=gray] at (0.5,{sqrt(3)/2}) {};
    \node[fill=white] at (-0.5,{sqrt(3)/2}) {};
    \node[fill=white] at (-0.5,{-sqrt(3)/2}) {};
    \node[fill=gray] at (0.5,{-sqrt(3)/2}) {};
    \end{scope}

    \begin{scope}[xshift=25,yshift=-150]
    \draw[black] (-1,0)--(1,0)--(.5,{sqrt(3)/2})--(0,0)--(-.5,{sqrt(3)/2})--cycle;
    \draw[black] (0,0)--(-.5,{-sqrt(3)/2})--(.5,{-sqrt(3)/2})--cycle;
    \node[fill=white] at (-1,0) {};
    \node[fill=gray] at (0,0) {};
    \node[fill=gray] at (1,0) {};
    \node[fill=gray] at (0.5,{sqrt(3)/2}) {};
    \node[fill=white] at (-0.5,{sqrt(3)/2}) {};
    \node[fill=gray] at (-0.5,{-sqrt(3)/2}) {};
    \node[fill=gray] at (0.5,{-sqrt(3)/2}) {};
    \end{scope}
    
    \begin{scope}[xshift=113,yshift=-150]
    \draw[black] (-1,0)--(1,0)--(.5,{sqrt(3)/2})--(0,0)--(-.5,{sqrt(3)/2})--cycle;
    \draw[black] (0,0)--(-.5,{-sqrt(3)/2})--(.5,{-sqrt(3)/2})--cycle;
    \node[fill=gray] at (-1,0) {};
    \node[fill=gray] at (0,0) {};
    \node[fill=white] at (1,0) {};
    \node[fill=white] at (0.5,{sqrt(3)/2}) {};
    \node[fill=gray] at (-0.5,{sqrt(3)/2}) {};
    \node[fill=gray] at (-0.5,{-sqrt(3)/2}) {};
    \node[fill=gray] at (0.5,{-sqrt(3)/2}) {};
    \end{scope}

    \begin{scope}[xshift=201,yshift=-150]
    \draw[black] (-1,0)--(1,0)--(.5,{sqrt(3)/2})--(0,0)--(-.5,{sqrt(3)/2})--cycle;
    \draw[black] (0,0)--(-.5,{-sqrt(3)/2})--(.5,{-sqrt(3)/2})--cycle;
    \node[fill=gray] at (-1,0) {};
    \node[fill=gray] at (0,0) {};
    \node[fill=gray] at (1,0) {};
    \node[fill=gray] at (0.5,{sqrt(3)/2}) {};
    \node[fill=gray] at (-0.5,{sqrt(3)/2}) {};
    \node[fill=white] at (-0.5,{-sqrt(3)/2}) {};
    \node[fill=white] at (0.5,{-sqrt(3)/2}) {};
    \end{scope}
    \end{tikzpicture}
    \caption{All $2^3+3\binom{2}{2} = 11$ minimal forts (white) of $Wd(3,3)$. The first two rows depict the 8 minimal forts containing the cut vertex, while the last row has the 3 minimal forts that do not contain the cut vertex.}
    \label{fig:forts_Wd(3,3)}
\end{figure}
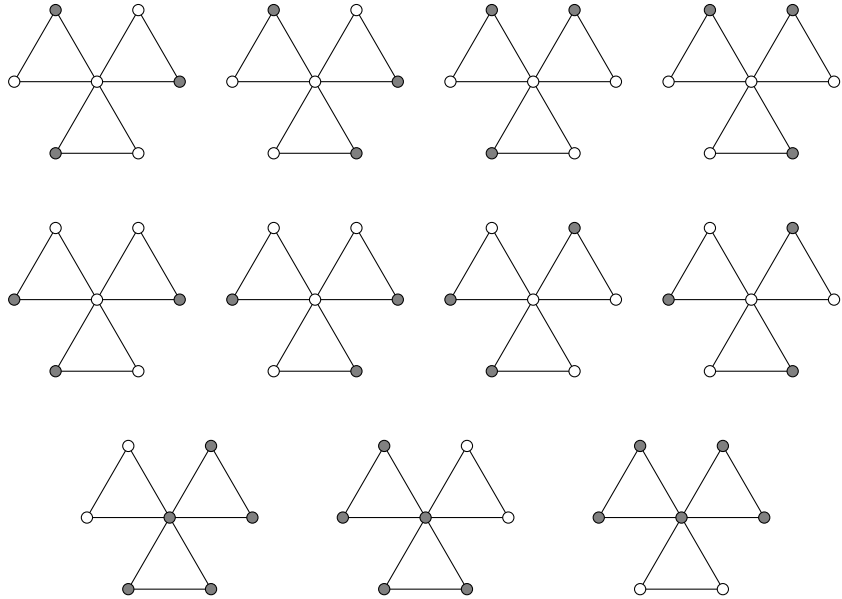

Note that the windmill graph $Wd(r,k)$ has order $n=k(r-1)+1$.
Therefore, Corollary~\ref{cor:windmill} implies that the number of minimal forts of $Wd(r,k)$ satisfies
\[
\abs{\mathcal{F}_{Wd(r,k)}} = (r-1)^{\frac{n-1}{r-1}} + \frac{n-1}{r-1}\binom{r-1}{2},
\]
which is exponential in the order of the graph. 
Moreover, the base of the exponent, $(r-1)^{\frac{1}{r-1}}$, is maximized over discrete values of $r$ when $r=4$.
In this case, the base of the exponent is $3^{1/3}=1.4422495703\ldots$, which is larger than the plastic ratio. 
Hence, the number of minimal forts of the windmill graph $Wd(4,k)$ grows faster than the cycle graph and path graph of the same order, see Corollaries~\ref{cor:cycle_graph_explicit} and~\ref{cor:path_graph_explicit}, respectively.
We summarize this result in the following corollary.
\begin{corollary}\label{cor:windmill4k}
The number of minimal forts for $Wd(4,k)$ satisfies
\[
\abs{\mathcal{F}_{Wd(4,k)}} = 3^{\frac{n-1}{3}} + (n-1),
\]
where $n=3k+1$ is the order of $Wd(4,k)$. 
Moreover, the limit of successive ratios satisfies
\[
\lim_{k\rightarrow\infty}\frac{\abs{\mathcal{F}_{Wd(4,k+1)}}}{\abs{\mathcal{F}_{Wd(4,k)}}} = 3^{1/3}. 
\]
\end{corollary}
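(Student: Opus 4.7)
The plan is to invoke Corollary~\ref{cor:windmill} directly with $r=4$. That corollary gives $(4-1)^{k} = 3^{k}$ minimal forts of $Wd(4,k)$ containing the cut vertex $v$ and $k\binom{4-1}{2} = 3k$ minimal forts not containing it. These two classes are disjoint and together exhaust $\mathcal{F}_{Wd(4,k)}$, so
$$|\mathcal{F}_{Wd(4,k)}| = 3^{k} + 3k.$$
Using the order $n = 3k+1$ of $Wd(4,k)$, the substitution $k = (n-1)/3$ converts $3^{k}$ into $3^{(n-1)/3}$ and $3k$ into $n-1$, yielding the first claim $|\mathcal{F}_{Wd(4,k)}| = 3^{(n-1)/3} + (n-1)$.

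For the asymptotic statement, I would plug the closed form into the ratio
$$\frac{|\mathcal{F}_{Wd(4,k+1)}|}{|\mathcal{F}_{Wd(4,k)}|} = \frac{3^{k+1} + 3(k+1)}{3^{k} + 3k},$$
and observe that the linear term $3k$ is dominated by the exponential term $3^{k}$ as $k \to \infty$. The stated per-vertex growth rate $3^{1/3}$, interpreted in the sense that each step $k \to k+1$ increases the order by three, is then read off from the leading exponential contribution; equivalently, $|\mathcal{F}_{Wd(4,k)}|^{1/n} \to 3^{1/3}$ since $3^{k}$ dominates the polynomial correction.

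All of the substantive combinatorial content has already been packaged into Corollary~\ref{cor:windmill}; what remains is a change of variables and a routine dominant-term computation. Consequently, there is no real obstacle in this proof: the argument is essentially arithmetic after the windmill count is in hand.
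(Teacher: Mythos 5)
Your proposal matches the paper's own derivation exactly: the paper states this corollary as an immediate consequence of Corollary~\ref{cor:windmill} with $r=4$ together with the substitution $n=3k+1$, and gives no separate proof. You are also right to flag the subtlety in the second claim --- the literal limit of $\frac{3^{k+1}+3(k+1)}{3^{k}+3k}$ is $3$, not $3^{1/3}$, so the stated limit only makes sense as the per-vertex growth rate (each increment of $k$ adds three vertices), which is precisely the reading you adopt and which the paper glosses over.
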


While $3^{1/3}$ is the largest asymptotic growth rate we have observed, we suspect that there exists families of graphs with a larger asymptotic growth rate, see Conjecture~\ref{con:gen_gr}.
\section{Conclusion}\label{sec:conclusion}
In this article, we investigate the number of minimal forts of a graph.
In Corollary~\ref{cor:sperner_fort}, we show that the number of minimal forts of a graph of order $n\geq 6$ is strictly less than $\binom{n}{\floor{n/2}}$, which is a famous combinatorial bound due to Emanuel Sperner (1928) on the size of a collection of subsets where no subset contains another.
Then, we derive explicit formula for the number of minimal forts of several families of graphs.
In particular, see Corollary~\ref{cor:cycle_graph} for the cycle graph, Corollary~\ref{cor:path_graph} for the path graph, and Theorem~\ref{thm:spider_graph} for the spider graph.
We also develop methods for constructing minimal forts for graph products using minimal forts of the original graphs. 
In the process, we provide explicit formula and lower bounds on the number of minimal forts for additional families of graphs.
In particular, see Corollary~\ref{cor:wheel} for the wheel graph, Corollary~\ref{cor:sunlet} for the sunlet graph, and Corollary~\ref{cor:windmill} for the windmill graph. 
A summary of explicit formula and lower bounds for the number of minimial forts of various families of graphs is shown in Table~\ref{tab:summary}. 

\begin{table}[ht]
\resizebox{1.0\textwidth}{!}{%
\begin{tabular}{l|c|c|c}
Result & $G$ & Order & $\abs{\mathcal{F}_{G}}$ \\
\hline
Proposition~\ref{prop:empty_graph} & $E_{n}$ & $n$ & $n$ \\
Proposition~\ref{prop:complete_graph} & $K_{n}$ & $n$ & $\binom{n}{2}$ \\
Proposition~\ref{prop:complete_bipartite} & $K_{p,q}$ & $p+q$ & $\binom{p}{2} + \binom{q}{2}$ \\
Corollary~\ref{cor:cycle_graph_explicit} & $C_{n}$ & $n$ & $\nint{\psi^{n}},~n\geq 10$ \\
Corollary~\ref{cor:path_graph_explicit} & $P_{n}$ & $n$ & $\nint{\frac{\psi^{n+4}}{2\psi+3}},~n\geq 1$ \\
Theorem~\ref{thm:spider_graph} & $S_{l_{1},\ldots,l_{k}}$ & $\sum_{i=1}^{k}l_{i}+1$ & $\prod_{i=1}^{k}\abs{\mathcal{F}_{P_{l_{i}-1}}} + \sum_{i=1}^{k}\abs{\mathcal{F}_{P_{l_{i}-2}}}\prod_{j\neq i}\abs{\mathcal{F}_{P_{l_{j}-1}}}+\sum_{1\leq i<j\leq k}\abs{\mathcal{F}_{P_{l_{i}}}}\abs{\mathcal{F}_{P_{l_{j}}}}$ \\
Corollary~\ref{cor:wheel} & $C_{n-1}\vee K_{1}$ & $n$ & $\geq\nint{\psi^{n-1}},~n\geq 11$ \\
Corollary~\ref{cor:sunlet} & $C_{n}\circ K_{1}$ & $2n$ & $\geq\nint{\psi^{n}},~n\geq 10$ \\
Corollary~\ref{cor:windmill} & $Wd(r,k)$ & $k(r-1)+1$ & $k\binom{r-1}{2}+(r-1)^{k}$ \\
Corollary~\ref{cor:windmill4k} & $Wd(4,k)$ & $n=3k+1$ & $\geq 3^{\frac{n-1}{3}}$
\end{tabular}%
}
\caption{Summary of the number of minimal forts for several families of graphs.}
\label{tab:summary}
\end{table}

In Theorem~\ref{thm:spider_graph_bound}, we prove that the number of minimal forts for the spider graph $S_{l_{1},\ldots,l_{k}}$ is bounded above by $\binom{k}{2}\abs{\mathcal{F}_{P_{n}}}$, where $n=\sum_{i=1}^{k}l_{i}+1$ is the order of the spider graph.
Therefore, the asymptotic growth rate of the spider graph is bounded above by the asymptotic growth rate of the path graph, which is the plastic ratio.
We conjecture that the path graph has an extremal number of minimal forts over all trees. 
\begin{conjecture}\label{con:tree_gr}
Let $T_{n}$ denote a tree with the largest number of minimal forts over all trees of order $n\geq 1$. 
Then, the number of minimal forts satisfies
\[
\abs{\mathcal{F}_{T_{n}}} \leq \binom{n}{2}\abs{\mathcal{F}_{P_{n}}}.
\]
In particular, the limit of successive ratios satisfies
\[
\lim_{n\rightarrow\infty}\frac{\abs{\mathcal{F}_{T_{n+1}}}}{\abs{\mathcal{F}_{T_{n}}}} = \psi. 
\]
\end{conjecture}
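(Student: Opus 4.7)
The plan is to prove Conjecture~\ref{con:tree_gr} by generalizing the strategy used in the proof of Theorem~\ref{thm:spider_graph_bound}, constructing an injection from minimal forts of a tree $T$ of order $n$ into pairs consisting of a two-element subset of $V(T)$ and a minimal fort of a path graph.

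For each minimal fort $F$ of a tree $T$ of order $n$, I would first choose a canonical pair $\{u_F, v_F\} \subseteq F$ achieving $\max_{u,v \in F} d_T(u,v)$, with a fixed tiebreaking rule (for instance, lexicographic on a vertex labeling of $T$). Let $P_{uv}$ denote the unique $(u_F, v_F)$-path in $T$. By Theorem~\ref{thm:tree_induced_path}, the restriction $F_P = F \cap V(P_{uv})$ is a minimal fort of the induced path $T[V(P_{uv})] \cong P_\ell$ for some $\ell \leq n$.

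The central technical claim is that $F$ is uniquely determined by the pair $(\{u_F, v_F\}, F_P)$. The intuition is that any vertex $w \in F$ off the path $P_{uv}$ is forced into a specific position by the fort condition together with the diameter assumption: for each on-path vertex $p \in F_P$ with an off-path neighbor $q$, the fort condition on $q$ (either $q \in F$ or $q$ has zero or at least two fort-neighbors) propagates determination outward into the subtree hanging off $P_{uv}$ at $p$, while the diameter condition prevents off-path vertices from extending too far. Granting this uniqueness, the map $F \mapsto (\{u_F, v_F\}, F_P)$ is injective, and since $\abs{\mathcal{F}_{P_\ell}}$ is non-decreasing in $\ell$ (via the Padovan formula in Corollary~\ref{cor:path_graph_explicit}), the inequality $\abs{\mathcal{F}_{T_n}} \leq \binom{n}{2}\abs{\mathcal{F}_{P_n}}$ follows.

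For the asymptotic statement, combining the above with $\abs{\mathcal{F}_{T_n}} \geq \abs{\mathcal{F}_{P_n}}$ (since $P_n$ is itself a tree of order $n$) yields $\log\abs{\mathcal{F}_{T_n}} = n\log\psi + O(\log n)$. To obtain the exact limit of successive ratios, one additionally needs control over the polynomial prefactor — for example, by showing that a tree maximizing the number of minimal forts is structurally close to $P_n$ asymptotically, ruling out polynomial-size oscillations in $\abs{\mathcal{F}_{T_n}}$ as $n$ varies. The main obstacle is the uniqueness claim for the map: while it holds trivially in the spider case (where all fort vertices lie on the canonical path by the arguments in Theorem~\ref{thm:spider_graph}), it requires a nontrivial propagation argument for trees with multiple junction vertices. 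A case analysis on off-path subtrees, possibly via induction on the number of junction vertices, is the natural route; if strict uniqueness fails, one can fall back to a map with polynomially bounded fibers, which still suffices for the exponential growth-rate comparison but yields a weaker multiplicative constant than $\binom{n}{2}$.
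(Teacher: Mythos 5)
This statement is Conjecture~\ref{con:tree_gr}: the paper does not prove it (it is stated as an open problem motivated by Theorem~\ref{thm:spider_graph_bound}), so there is no proof of the authors' to compare against, and your proposal must be judged on its own terms. Unfortunately, its central technical claim fails. You assert that a minimal fort $F$ of a tree $T$ is determined by a canonical diameter-realizing pair $\{u_F,v_F\}\subseteq F$ together with the restriction $F_P=F\cap V(P_{uv})$. This is already false for spiders. Take $S_{3,3,3}$ with center $c$ and three legs of length $3$, with vertices $a_1,a_2,a_3$ on the first leg ($a_1$ adjacent to $c$, $a_3$ the pendant) and similarly $b_i$, $d_i$. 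Then $F_1=\{c,a_2,a_3,b_2,b_3,d_2,d_3\}$ and $F_2=\{c,a_2,a_3,b_2,b_3,d_1,d_3\}$ are both minimal forts of $S_{3,3,3}$ (they are, respectively, the unique fort counted by the product term and one of the three forts counted by the middle term in Theorem~\ref{thm:spider_graph}). Both contain all three pendants, which are pairwise at the maximum distance $6$, so any fixed tiebreak assigns both forts the same canonical pair, say $\{a_3,b_3\}$; and both restrict to $\{a_3,a_2,c,b_2,b_3\}$ on the $(a_3,b_3)$-path. Hence the map is not injective, and the bound $\binom{n}{2}\abs{\mathcal{F}_{P_n}}$ does not follow from this construction. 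Your proposed propagation argument cannot repair this: the ambiguity lives precisely in the off-path subtrees, where the fort condition admits several configurations consistent with the same on-path data.

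The fallback you mention, a map with polynomially bounded fibers, is therefore the actual content of the argument, and it is not established. In the example above the fiber has size linear in the number of legs, but for general trees (e.g., caterpillars with many pendant paths of length $3$ attached to a long spine) each off-path branch can carry more than one admissible configuration, and nothing in the proposal rules out these choices multiplying independently across branches to give exponentially large fibers; that is exactly the hard case the conjecture is about. Finally, even granting polynomial fibers, the sandwich $\abs{\mathcal{F}_{P_n}}\leq\abs{\mathcal{F}_{T_n}}\leq n^{O(1)}\abs{\mathcal{F}_{P_n}}$ yields only $\log\abs{\mathcal{F}_{T_n}}=n\log\psi+O(\log n)$, which determines $\lim_n\abs{\mathcal{F}_{T_n}}^{1/n}$ but not the limit of successive ratios asserted in the conjecture: a sequence with that logarithmic asymptotic can still have non-convergent successive ratios. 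You flag this last gap yourself, but the proposal offers no mechanism to close it, so both assertions of the conjecture remain open under this approach.
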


In Corollary~\ref{cor:windmill4k}, we prove that the number of minimal forts of $Wd(4,k)$ has an asymptotic growth rate of $3^{1/3}$.
It is worth noting that $3^{1/3}$ happens to be the maximum asymptotic growth rate of maximal cliques and independent sets~\cite{Griggs1988,Moon1965,Wilf1986}.
We conjecture that there exist families of graphs where the number of minimal forts has an asymptotic growth rate strictly greater than $3^{1/3}$.
\begin{conjecture}\label{con:gen_gr}
There exists a family of graphs $G_{n}$ such that
\[
3^{1/3} < \lim_{n\rightarrow\infty}\frac{\abs{\mathcal{F}_{G_{n+1}}}}{\abs{\mathcal{F}_{G_{n}}}} < 2,
\]
where $2$ is the asymptotic growth rate of Sperner's bound. 
\end{conjecture}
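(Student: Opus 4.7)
The natural strategy is to push the vertex-sum construction of Section~\ref{subsec:vert_sum} beyond the complete-graph block. By iterating Corollary~\ref{cor:fort_sum} exactly as in the proof of Corollary~\ref{cor:windmill}, if $B$ is any graph with distinguished vertex $v$ and $G_k$ denotes the graph obtained by identifying $v$ across $k$ disjoint copies of $B$, then the number of minimal forts of $G_k$ containing the identified vertex is at least $m^k$, where $m$ counts the minimal forts $F$ of $B$ with $v\in F$ such that $F\setminus\{v\}$ contains no fort of $B$. Since $|V(G_k)|=k(|V(B)|-1)+1$, the resulting asymptotic growth rate per vertex is $m^{1/(|V(B)|-1)}$. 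To beat the windmill threshold we therefore seek a block $B$ satisfying $m^3>3^{\,|V(B)|-1}$. The case $B=K_4$ realizes equality in this inequality, which explains why $Wd(4,k)$ is extremal among complete-graph blocks; thus the first concrete step is a systematic search (computer-assisted via Theorem~\ref{thm:failed_zf}, identifying minimal forts with complements of maximal failed zero forcing sets) over small non-complete blocks to find $B$ strictly clearing this bar.

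A complementary approach replaces the single-cut-vertex paradigm with a transfer-matrix construction. The recurrences behind Corollaries~\ref{cor:cycle_graph} and~\ref{cor:path_graph} arise because the minimal forts of $C_n$ and $P_n$ are classified by a finite set of local states along the graph. For a suitably chosen small graph $H$, the Cartesian product $H\Box P_n$ admits a similar column-by-column decomposition: the minimal forts of $H\Box P_n$ are in bijection with walks in a finite automaton whose states encode, for each column, the intersection of a minimal fort with that column together with any ``pending'' obligations propagated from neighboring columns. The asymptotic growth rate of $|\mathcal{F}_{H\Box P_n}|$ is then the spectral radius of the associated transfer matrix, and the plan is to exhibit an $H$ whose spectral radius exceeds $3^{1/3}$. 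A promising place to look is among small graphs rich in twin pairs or dense subgraphs, since these already produce many short minimal forts in isolation.

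The principal obstacle in either approach is the \emph{minimality certification}. It is easy to produce blocks $B$ with many forts through a vertex, but in the vertex-sum setting each candidate must clear the condition of Corollary~\ref{cor:fort_sum}: after removing $v$, no proper subset of the combined fort may itself be a fort, and this condition couples the $k$ pieces together through auxiliary sub-forts of $B\setminus\{v\}$. Likewise in the transfer-matrix approach, the minimality condition must be encoded into the automaton states, which can inflate the state space dramatically and obscure the spectral analysis. Overcoming this is where the heart of any proof of Conjecture~\ref{con:gen_gr} lies: one needs a block (or repeating unit) whose minimal-fort multiplicity through a prescribed vertex is exponentially large in its order, yet whose internal sub-forts are structurally constrained enough that the gluing cannot introduce spurious proper sub-forts. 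Given the tightness of the $K_4$ example, we expect that any successful candidate will exploit a delicate asymmetry between ``forts that pass through $v$'' and ``forts that avoid $v$'' which has no analogue in complete graphs.
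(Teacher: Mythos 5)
This statement is Conjecture~\ref{con:gen_gr}; the paper does not prove it, and explicitly leaves it open. Your submission is likewise not a proof: it is a research program. You never exhibit a concrete family $G_{n}$, never compute or bound a growth rate exceeding $3^{1/3}$, and never address the upper inequality $\lim_{n\to\infty}\abs{\mathcal{F}_{G_{n+1}}}/\abs{\mathcal{F}_{G_{n}}}<2$ at all. Both of your proposed routes end at an explicitly unresolved step (``a systematic search \ldots to find $B$ strictly clearing this bar''; ``the plan is to exhibit an $H$ whose spectral radius exceeds $3^{1/3}$''), so the central existence claim remains entirely unestablished.

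That said, as a program your outline is sensible and consistent with the machinery of the paper. The generalized vertex-sum bound $m^{1/(\abs{V(B)}-1)}$ is the right way to frame why $Wd(4,k)$ is extremal among complete-graph blocks (indeed $m=r-1$ and $\abs{V(B)}-1=r-1$ give equality exactly at $r=4$), and you correctly identify the minimality certification of Corollary~\ref{cor:fort_sum} as the real obstruction: a fort of the glued graph avoiding $v$ need not sit inside a single copy of $B$, because the copies are coupled through the condition at $v$, so the quantity $m$ you define (forts $F\ni v$ of $B$ with $F\setminus\{v\}$ containing no fort \emph{of $B$}) is not quite the correct count without a further argument ruling out cross-copy sub-forts. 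If you pursue this, that is the first lemma you would need to state and prove precisely. But as it stands there is no proof here to compare with the paper's, because neither you nor the paper proves the statement.
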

\subsection*{Acknowledgements}
Thanks to Professor Boris Brimkov for pointing out Theorem 1 and Example 2 of~\cite{Brimkov2021}. 

\end{document}